\DeclareMathOperator{\Trig}{Trig}
\DeclareMathOperator{\dive}{div}
\DeclareMathOperator{\loc}{loc}
\DeclareMathOperator{\Id}{Id}
\patchcmd{\thmhead}{(#3)}{#3}{}{}
\numberwithin{equation}{section}
\def\ps@pprintTitle{%
	\let\@oddhead\@empty
	\let\@evenhead\@empty
	\def\@oddfoot{}%
	\let\@evenfoot\@oddfoot}
\def\Xint#1{\mathchoice
	{\XXint\displaystyle\textstyle{#1}}%
	{\XXint\textstyle\scriptstyle{#1}}%
	{\XXint\scriptstyle\scriptscriptstyle{#1}}%
	{\XXint\scriptscriptstyle\scriptscriptstyle{#1}}%
	\!\int}
\def\XXint#1#2#3{{\setbox0=\hbox{$#1{#2#3}{\int}$ }
		\vcenter{\hbox{$#2#3$ }}\kern-.6\wd0}}
\def\dashint{\Xint-}
\newtheorem{theorem}{Theorem}[section]
\newtheorem{lemma}[theorem]{Lemma}
\newtheorem{proposition}[theorem]{Proposition}
\theoremstyle{definition}
\newtheorem{definition}[theorem]{Definition}
\theoremstyle{remark}
\newtheorem{remark}[theorem]{Remark}
\begin{document}
	
	\begin{frontmatter}
		
		
		
		
		\title{Bloch wave approach to almost periodic homogenization and approximations of effective coefficients}		
		\author{Sivaji Ganesh Sista\corref{cor1}}
		\ead{sivaji.ganesh@iitb.ac.in}
		
		\author{Vivek Tewary\corref{cor2}}
		\ead{vivekt@iitb.ac.in}
		\cortext[cor2]{Corresponding author}
		\address{Department of Mathematics, Indian Institute of Technology Bombay, Powai, Mumbai, 400076, India.}
		\begin{abstract}
			Bloch wave homogenization is a spectral method for obtaining effective coefficients for periodically heterogeneous media. This method hinges on the direct integral decomposition of periodic operators, which is not available in a suitable form for almost periodic operators. In particular, the notion of Bloch eigenvalues and eigenvectors does not exist for almost periodic operators. However, we are able to recover the almost periodic homogenization result by employing a sequence of periodic approximations to almost periodic operators. We also establish a rate of convergence for approximations of homogenized tensors for a class of almost periodic media. The results are supported by a numerical study.
		\end{abstract}
		
		\begin{keyword}
			Bloch eigenvalues \sep Almost Periodic Operators \sep Homogenization
			\MSC[2010] 47A55 \sep 35J15 \sep 35B27 \sep 34C27
		\end{keyword}
	
	\end{frontmatter}

\section{Introduction}
The aim of this paper is to extend the framework of Bloch wave method~\cite{Conca1997} to almost periodic media. Many microstructures beyond periodic occur in nature, such as amorphous solids like glass, motion of 2D electrons in a magnetic field~\cite{Hofstadter76}, quasicrystals~\cite{Schechtman84}, etc. The mixing together of two periodic media or an interface problem involving two different periodic media on the two sides of the interface~\cite{Blanc2015} may be thought of as an almost periodic microstructure. Quasicrystals, which were discovered by Schechtman~\cite{Schechtman84}, are an example of almost periodic media and are often modeled by taking projections of periodic media in higher dimensions~\cite{Katz1986}. Finally, dimers and polymers have also been modeled with almost periodic potentials~\cite{Carvalho2002}. Although almost periodic media is completely deterministic, it serves as a bridge to stochastic descriptions of nature. A large variety of seemingly random natural phenomena can be explained through almost periodic structures~\cite{Allais1983}.

The first author to study the homogenization of highly oscillatory almost periodic media was Kozlov~\cite{Kozlov78}. Unlike periodic media, the cell problem for almost periodic media is posed on $\mathbb{R}^d$ and may not have solutions in the class of almost periodic functions. This was remedied by an abstract approach outlined in~\cite{Oleinik1982,Jikov1994} where solutions to the corrector equation were sought without derivatives.

Bloch wave method of homogenization relies on direct integral decomposition of periodic operators. For almost periodic operators, a direct integral decomposition is proposed in~\cite{bellissard1981almost}, however its fibers do not have compact resolvent which prevents us from defining Bloch eigenvalues for the almost periodic operator. To overcome this difficulty, we make use of periodic approximations, which are defined by a ``restrict and periodize" operation, employed earlier by Bourgeat and Piatnitski~\cite{BourgeatPiatnitski2004} for stochastic homogenization.

Bloch wave method is a spectral method of homogenization. In particular, it relies on tools from representation theory for periodic operators~\cite{maurin68}. For definiteness, consider an operator in $L^2(\mathbb{R}^d)$ of the form 

\begin{equation}
\mathcal{F}^\epsilon u:=-\frac{\partial}{\partial x_k}\left(\kappa_{kl}\left(\frac{x}{\epsilon}\right)\frac{\partial u}{\partial x_l}\right),\label{eq1:operator}
\end{equation} where the coefficients are measurable bounded, periodic and symmetric. Let $\mathbb{T}^d$ denote the $d$-dimensional torus. Then the operator $\mathcal{F}^\epsilon$ is unitarily equivalent to a direct integral, given by 

\begin{equation}
\int^{\bigoplus}_{\mathbb{T}^d/\epsilon}\mathcal{F}^\epsilon(\xi)d\xi,
\label{eq2:representation}
\end{equation}

where the fibers  $\mathcal{F}^\epsilon(\xi)$ have compact resolvent and hence each fiber has a countable sequence of eigenvalues and eigenfunctions $\{\lambda_n^\epsilon(\xi),\phi_n^\epsilon(x,\xi)\}_{n\in\mathbb{N}}$, which are known as Bloch eigenvalues and eigenfunctions when considered as functions of $\xi\in\mathbb{T}^d/\epsilon$. Define $l^{\,th}$ Bloch coefficient of $u$ by
\begin{align*}
({\mathcal{B}}_l^\epsilon u)(\xi)=\int_{\mathbb{R}^d}\overline{{\phi}^\epsilon_l(x,\xi)}u(x)e^{-ix\cdot \xi}~dx,~l\in\mathbb{N}.
\end{align*} Then as a consequence of the representation~\eqref{eq2:representation}, the equation $\mathcal{F}^\epsilon u^\epsilon=f$, where $f\in L^2(\mathbb{R}^d)$, can be written as a cascade of equations in the Bloch space, viz.,  
\begin{align*}
\lambda^\epsilon_1(\xi)\mathcal{B}^\epsilon_1(u^\epsilon)(\xi)&=\mathcal{B}^\epsilon_1(f)\\
\lambda^\epsilon_2(\xi)\mathcal{B}^\epsilon_2(u^\epsilon)(\xi)&=\mathcal{B}^\epsilon_2(f)\\
&\vdots\\
\lambda^\epsilon_l(\xi)\mathcal{B}^\epsilon_l(u^\epsilon)(\xi)&=\mathcal{B}^\epsilon_l(f)\\
&\vdots
\end{align*}
Homogenized equation can be recovered by passing to the limit in the first equation. The rest of the equations do not contribute to homogenization. It is evident that the representation~\eqref{eq2:representation} is crucial in this method.

For almost periodic operators, we introduce periodic approximations on cubes of side length $2\pi R$ which will add yet another parameter to the problem. We perform a Bloch wave analysis of the approximation and pass to the limit in Bloch space, first as $\epsilon\to 0$, followed by $R\to\infty$. We mention some of the interesting techniques employed in this paper. The approximate Bloch spectral problems are posed on varying Hilbert spaces indexed by $R$. The approximate corrector and approximate homogenized tensors are obtained in terms of the first Bloch eigenvector and Bloch eigenvalue of the periodization. The homogenization limit is given a unified treatment by working in the Besicovitch space of almost periodic functions. We also prove a module containment result for the correctors which is of independent interest. 

Moreover we obtain a rate of convergence for the approximate homogenized tensors corresponding to periodizations for a class of almost periodic media. To this end, we use ideas from ~\cite{BourgeatPiatnitski2004},~\cite{Shen2015} and~\cite{Gloria2011}. Bourgeat and Piatnitski~\cite{BourgeatPiatnitski2004} prove a convergence rate for approximate homogenized coefficients for stochastic media under strong mixing conditions. The stochastic process generated by almost periodic media is strictly ergodic and not mixing~\cite{Simon1982}. Therefore the result of Bourgeat and Piatnitski does not apply to them. This necessitates a quantification of almost periodicity. One such quantification is proposed in~\cite{Armstrong2014}.

In a previous work~\cite{Vivek2018}, the authors consider perturbations of coefficients of an operator which make spectral edges simple. This work may also be thought of in the same vein. The almost periodic operator is expected to have a Cantor like spectrum~\cite{Damanik2019}, and hence ill-defined spectral edges. Periodic approximations serve to regularize the spectral edges.

Bloch wave method has been extended to other non-periodic media such as Hashin-Shtrikman structures~\cite{Ghosh2018}. A notion of Bloch-Taylor waves for aperiodic media has been introduced in~\cite{Gloria2017} using regularized correctors. A notion of approximate Bloch waves for quasiperiodic media using periodic lifting is also introduced by the authors in~\cite{Vivek2019ii}.

\subsection{Plan of the Paper}
In Section~\ref{section2}, we shall introduce some of the notation and definitions are required in the text. In Section~\ref{section3}, we will define periodic approximations for almost periodic functions. In Section~\ref{section4}, Bloch wave analysis of periodic approximations is performed. In Section~\ref{section5}, we prove the homogenization result by first taking the limit $\epsilon\to 0$, followed by the limit $R\to \infty$ in the Bloch transform of the periodic approximations. In Section~\ref{section6}, we prove that the homogenized coefficients of the periodic approximations converge to those of the almost periodic operator. In Section~\ref{section7}, we prove that the higher Bloch modes do not contribute to the homogenization process. In Section~\ref{roc}, we will establish rate of convergence for approximations of homogenized tensors corresponding to periodizations. Finally, in Section~\ref{numerics}, we conduct a numerical study of Dirichlet and Periodic approximations.

\section{Notations and Definitions}\label{section2}
\subsection{Periodicity} Let $Y=[-\pi,\pi)^d$ denote a parametrization for the $d$-dimensional torus $\mathbb{T}^d$. Measurable and bounded $Y$-periodic functions in $\mathbb{R}^d$ will be denoted by $L^\infty_\sharp(Y)$, which is another manifestation of $L^\infty(\mathbb{T}^d)$. The space of all $L^2_{\loc}(\mathbb{R}^d)$ functions that are $Y$-periodic are denoted by $L^2_\sharp(Y)$. Similarly, the space of all $H^1_{\loc}(\mathbb{R}^d)$ functions that are $Y$-periodic are denoted by $H^1_\sharp(Y)$.

\subsection{Almost Periodicity} 

For $K=\mathbb{R}$ or $\mathbb{C}$, let $\Trig(\mathbb{R}^d;K)$ denote the space of all $K$-valued trigonometric polynomials of the form $\displaystyle P(y)=\sum_{j=1}^{N}a_j e^{iy\cdot \eta_j}$. 

\begin{definition}
	A bounded continuous function $u:\mathbb{R}^d\to\mathbb{R}$ is said to be uniformly almost periodic if it is the uniform limit of a sequence of real trigonometric polynomials, i.e., there exists a sequence $P_n(y)\in \Trig(\mathbb{R}^d;\mathbb{R})$ such that $||u-P_n||_\infty\to 0$ as $n\to\infty$.
\end{definition}

Uniformly almost periodic functions are also known as Bohr almost periodic functions. The set of all Bohr almost periodic functions when equipped with the uniform norm is a Banach space denoted by $AP(\mathbb{R}^d)$. 

\begin{definition}
	The mean value of a function $u:\mathbb{R}^d\to\mathbb{R}$ in $L^1_{\loc}(\mathbb{R}^d)$ is the following limit
	\begin{equation}
	\mathcal{M}(u)=\limsup_{L\to\infty}\frac{1}{|LY|}\int_{LY}u(y)~dy,
	\end{equation}
	where $LY=[-L\pi,L\pi)^d$ and $|\cdot|$ denotes its Lebesgue measure.
\end{definition}

In fact, the limsup above is a limit for Bohr almost periodic functions~\cite{Besicovitch55}. One can obtain a larger class of functions than Bohr almost periodic functions by employing the notion of mean value.

\begin{definition}
	A function $u\in L^2_{\loc}(\mathbb{R}^d;\mathbb{C})$ is said to be Besicovitch almost periodic if there exists a sequence $P_n(y)\in \Trig(\mathbb{R}^d;\mathbb{C})$ such that $\mathcal{M}(|u-P_n|^2)\to 0$ as $n\to\infty$.
\end{definition}

On the set of all Besicovitch almost periodic functions, the quantity $\displaystyle(\mathcal{M}(|\cdot|^2))^{1/2}$ is a semi-norm. Given Besicovitch almost periodic functions $f$ and $g$, we shall identify them if $\mathcal{M}(|f-g|^2)=0$ to obtain a Hilbert space, with the inner product given by $\mathcal{M}(f\cdot\overline{g})$, which will be denoted by $B^2(\mathbb{R}^d)$. The superscript $2$ serves to remind us that one could very well define a Besicovitch analogue of $L^p$ spaces.

Let us recall some interesting properties of almost periodic functions. In direct analogy with periodic functions, one can define a formal Fourier series for almost periodic functions~\cite{Besicovitch55}. The trigonometric factors $e^{iy\cdot\eta}$ that appear in the Fourier series of an almost periodic function $u$ correspond to all $\eta\in\mathbb{R}^d$ for which $\mathcal{M}(ue^{-iy\cdot\eta})$ is non-zero. Note that for a given function $u$, the set of all such $\eta$ is countable. This set is called the set of frequencies of $u$ and the $\mathbb{Z}$-module generated by these frequencies is denoted as $Mod(u)$. Almost periodic functions $u$ with a finitely generated $Mod(u)$ are called quasiperiodic functions. It is interesting to note that $AP(\mathbb{R}^d)$ and $B^2(\mathbb{R}^d)$ are examples of non-separable Banach spaces. More information about these function spaces may be found in~\cite{Besicovitch55,Levitan82,Cord2009}. Further, a short but illuminating crash course on almost periodic functions may be found in~\cite{Shubin78}.

\subsection{Almost Periodic Differential Operators}
Consider the almost periodic second-order elliptic operator in divergence form given by
\begin{equation}
\mathcal{A}u:=-\dive(A\nabla u)=-\frac{\partial}{\partial y_k}\left(a_{kl}(y)\frac{\partial u}{\partial y_l}\right),\label{eq2:operator2}
\end{equation} where summation over repeated indices is assumed and the coefficients satisfy the following assumptions:
\begin{enumerate}[label={(A\arabic*)}] 
\item\label{A1} The coefficients $A=(a_{kl}(y))$ are measurable bounded real-valued almost periodic functions defined on $\mathbb{R}^d$. In other words, $a_{kl}\in AP(\mathbb{R}^d)$. 
\item\label{A2} The matrix $A=(a_{kl})$ is symmetric, i.e., $a_{kl}(y)=a_{lk}(y)$ $\forall\,y\in\mathbb{R}^d$. 
\item\label{A3} Further, the matrix $A$ is {\it coercive}, i.e., there exists an $\alpha>0$ such that  
\begin{equation}\label{coercivity}
\forall\, v\in\mathbb{R}^d\mbox{ and } a.e.\, y\in\mathbb{R}^d,\langle A(y)v, v\rangle\geq \alpha||v||^2.
\end{equation}
\end{enumerate}

Let $\Omega$ be an open set in $\mathbb{R}^d$. We are interested in the homogenization of the following equation posed in $H^1(\Omega)$
\begin{equation}
\mathcal{A}^\epsilon u^\epsilon:=-\frac{\partial}{\partial x_k}\left(a_{kl}^\epsilon\left({\epsilon}\right)\frac{\partial u^\epsilon}{\partial x_l}\right)=f,\label{eq3:oscillating}
\end{equation} where $f\in L^2(\Omega)$ and $a_{kl}^\epsilon\left({\epsilon}\right)\coloneqq a_{kl}\left(\frac{x}{\epsilon}\right)$. Suppose that $u^\epsilon$ converges weakly to a limit $u\in H^1(\Omega)$. We shall prove in the course of this paper that $u$ satisfies an equation of the form 

\begin{equation}
\mathcal{A}^* u:=-\frac{\partial}{\partial x_k}\left(a_{kl}^*\left(x\right)\frac{\partial u}{\partial x_l}\right)=f,\label{eq4:homogenized}
\end{equation} and we also identify the coefficients $a_{kl}^*$. The assumption of symmetry is not essential for the purposes of homogenization since it is possible to define a dominant Bloch mode~\cite{Sivaji2004} in the non-selfadjoint case.

Homogenization of almost periodic media was first carried out by Kozlov~\cite{Kozlov78} using quasiperiodic approximations. Subsequently, an abstract approach was given in~\cite{Oleinik1982,Jikov1994} which is described in Subsection~\ref{APhom}.

Some further notation that we make use of, is listed below:
\begin{itemize}
   \item We shall call a bounded continuous matrix-valued function $A$ almost periodic if each of its entries is an almost periodic function.
   \item The notation $\lesssim$ is shorthand for $\leq$ with a multiplicative constant which does not depend on $\epsilon$ and $R$ but may depend on the dimension $d$, $L^\infty$ bound of $A$, the coercivity constant $\alpha$, etc.
   \item The notation $\displaystyle\dashint_Gb(t)\,dt$ denotes the average $\displaystyle\frac{1}{|G|}\int_Gb(t)\,dt$ of a function $b$ over $G\subset\mathbb{R}^d$. Sometimes, the notation $\mathcal{M}_G(b)$ is also used.
   \item For $L>0$, let $Y_{L}$ denote the set $[-L\pi, L\pi)^d$. 
\end{itemize}

\section{Periodic Approximations of Almost Periodic Functions}\label{section3}
Equation~\eqref{eq3:oscillating} is not amenable to a Bloch wave analysis due to non-periodicity of the coefficients. Hence, we shall introduce some periodic approximations to the coefficients of the operator~\eqref{eq2:operator2}. These periodic approximations follow the simple principle of ``restrict and periodize". Given $f\in {AP}(\mathbb{R}^d)$, define 
\begin{align}\label{perapprox}f^R(y)=f(y) \mbox{ for }  y\in Y_R=[-R\pi ,R\pi )^d,\end{align} and extend to the whole of $\mathbb{R}^d$ by periodization, i.e, $f^R(y+2\pi  Rp)=f(y)$ for all $p\in\mathbb{Z}^d$. Hence, the periodic approximation so constructed belongs to $L^\infty_\sharp(Y_R)$.

The sequence $f^R$ may not converge in $L^\infty(\mathbb{R}^d)$. In fact, the functions which can be written as a uniform limit of periodic functions are called as limit-periodic functions~\cite{Levitan82} and they form a subclass of almost periodic functions. However, the sequence is convergent in $L^2_{\loc}(\mathbb{R}^d)$ as well as uniformly on compact subsets of $\mathbb{R}^d$. It is unclear if the sequence $f^R$ converges to $f$ in $B^2(\mathbb{R}^d)$. We remark here that convergence in $L^2_{\loc}(\mathbb{R}^d)$ does not imply convergence in $B^2(\mathbb{R}^d)$.

\begin{remark}
	Another periodic approximation to almost periodic functions is constructed in~\cite{Shubin78}. Roughly speaking, given a $f\in AP(\mathbb{R}^d)$, there is a sequence of numbers $(T_n)_{n\in\mathbb{N}}$ going to $\infty$ and a sequence of $T_nY$-periodic functions $P_n$ such that $||u-P_n||_{\infty,T_nY}\to 0$ as $n\to\infty$. The notation $||\cdot||_{\infty,T_nY}$ implies that the $L^\infty$ norm is taken over the cube $T_nY=\left[-T_n\pi,T_n\pi\right)^d$. The proof of this theorem involves approximation of irrationals by rationals by Dirichlet's Approximation Theorem. Clearly, either of these approximations may be used for our purposes. Note that the approximations in~\cite{Shubin78} have the advantage that they are smooth being trigonometric polynomials; however, the sequence $(T_n)_{n\in\mathbb{N}}$ cannot be chosen.
\end{remark}

\subsection{Periodic Approximations of Almost Periodic Operators}
\hspace{-0.2cm}For $R>0$, we denote by $A^R=(a_{kl}^R(y))_{k,l=1}^d$ the periodic approximation of $A=(a_{kl}(y))_{k,l=1}^d$ at level $R$, as explained in~\eqref{perapprox}, i.e., for $1\leq k,l\leq d$,
\begin{align}
	\begin{cases}
	a_{kl}^R(y)=a_{kl}(y)&\mbox{ for }y\in Y_R\\
	a_{kl}^R(y+2\pi Rp)=a_{kl}(y)&\mbox{ for }p\in\mathbb{Z}^d
	\end{cases}
\end{align}
The following operator will serve as a periodic approximation to $\mathcal{A}$.
\begin{equation*}
\mathcal{A}^Ru:=-\dive(A^R\nabla u)=-\frac{\partial}{\partial y_k}\left(a^R_{kl}(y)\frac{\partial u}{\partial y_l}\right).
\end{equation*} Such an approximation has been considered in~\cite{BourgeatPiatnitski2004}.

\section{Bloch wave Analysis for Periodic Approximations}\label{section4}
In this section, we shall perform a Bloch wave analysis for the periodic approximations of the operator in~\eqref{eq2:operator2}. In particular, we shall study, for each fixed $R>0$, the Bloch waves for the operators in $L^2(\mathbb{R}^d)$ given by
\begin{equation}
\mathcal{A}^Ru:=-\dive(A^R\nabla u)=-\frac{\partial}{\partial y_k}\left(a^R_{kl}(y)\frac{\partial u}{\partial y_l}\right).\label{Approx_n}
\end{equation} 

Let $Y_R^{'}\coloneqq \left[-\frac{1}{2R},\frac{1}{2R}\right)^d$ denote a basic cell for the dual lattice corresponding to $2\pi R\mathbb{Z}^d$. The operator $\mathcal{A}^R$ can be written as the direct integral $\displaystyle\int_{Y_R^{'}}^\bigoplus\mathcal{A}^R(\eta)~d\eta$, where \begin{align}\mathcal{A}^R(\eta)=e^{-i\eta\cdot y}\mathcal{A}^Re^{i\eta\cdot y}=-\left(\frac{\partial}{\partial y_k}+i\eta_k\right)a^R_{kl}(y)\left(\frac{\partial}{\partial y_l}+i\eta_l\right),\label{shiftedoperator}\end{align} is an unbounded operator in $L^2_\sharp(Y_R)$. As a consequence, the spectrum of the operator $\mathcal{A}^R$ is the union of spectra of $\mathcal{A}^R(\eta)$ as $\eta$ varies in $Y_R^{'}$~\cite[p.~284]{Reed1978}. It can be shown that the operators $A^R(\eta)$ have compact resolvent~\cite{Bensoussan2011}. Therefore, $\mathcal{A}^R(\eta)$ has a sequence of eigenvalues and eigenvectors
\begin{align}
\eta\mapsto(\lambda^R_m(\eta),\phi^R_m(y;\eta)), m=1,2,\ldots, \label{Blochevalues}
\end{align}
which are called Bloch eigenvalues and eigenvectors.

\begin{remark}\label{normalization}
	We shall choose $||\phi_1^R(\cdot;\eta)||_{L^2_\sharp(Y_R)}=R^{d/2}$ and $\phi^R_1(y;0)=\frac{1}{(2\pi)^{d/2}}$ $\forall\, R>0$.
\end{remark}

\subsection{Bloch Decomposition of $L^2(\mathbb{R}^d)$}

In this section, we shall state the theorem on decomposition of functions in $L^2(\mathbb{R}^d)$ using Bloch waves. We shall not go through the details of the proof, which may be found in~\cite{Bensoussan2011},~\cite{Sivaji2004} and~\cite{SivajiGanesh2005}.

Consider the unbounded operator defined in $L^2(\mathbb{R}^d)$ \begin{equation}
\mathcal{A}^{R,\epsilon}u:=-\dive(A^{R,\epsilon}\nabla u)=-\frac{\partial}{\partial x_k}\left(a^{R,\epsilon}_{kl}\left(x\right)\frac{\partial u}{\partial x_l}\right),\label{Approx_epsilon}
\end{equation} where $\displaystyle a^{R,\epsilon}_{kl}(x)\coloneqq a^{R}_{kl}\left(\frac{x}{\epsilon}\right)$.

By homothecy, the Bloch eigenvalues and Bloch eigenvectors for the operator~\eqref{Approx_epsilon} are
\begin{align}
\lambda^{R,\epsilon}_m(\xi)=\epsilon^{-2}\lambda^R_m(\epsilon\xi),\,\phi^{R,\epsilon}_m(x;\xi)=\phi^R_m\left(\frac{x}{\epsilon};\epsilon\xi\right),
\end{align}
where $\lambda^R_m(\eta)$ and $\phi^R_m(\eta)$ are defined in~\eqref{Blochevalues}.

\begin{theorem}\label{BlochDecomposition} Let $R>0$. Let $g\in L^2(\mathbb{R}^d)$. Define the $m^{th}$ Bloch coefficient of $g$ as 
	\begin{align}\label{BlochTransform1epsilon}
	\mathcal{B}^{R,\epsilon}_mg(\xi)\coloneqq\int_{\mathbb{R}^d}g(x)e^{-ix\cdot\xi}\overline{\phi_m^{R,\epsilon}(x;\xi)}~dx,~m\in\mathbb{N},~\xi\in \epsilon^{-1}Y^{'}_R.
	\end{align}
	\begin{enumerate}
		\item  The following inverse formula holds
		\begin{align}\label{Blochinverse}
		g(y)=\int_{\epsilon^{-1}Y_R^{'}}\sum_{m=1}^{\infty}\mathcal{B}^{R,\epsilon}_mg(\xi)\phi_m^{R,\epsilon}(x;\xi)e^{ix\cdot\xi}\,d\xi.
		\end{align}
		\item{\bf Parseval's identity} \begin{align}\label{unitary1epsilon}
		||g||^2_{L^2(\mathbb{R}^d)}=\sum_{m=1}^{\infty}\int_{\epsilon^{-1}Y_R^{'}}|\mathcal{B}^{R,\epsilon}_mg(\xi)|^2\,d\xi.
		\end{align}
		\item{\bf Plancherel formula} For $f,g\in L^2(\mathbb{R}^d)$, we have\begin{align}\label{Plancherel}
		\int_{\mathbb{R}^d}f(y)\overline{g(y)}\,dy=\sum_{m=1}^{\infty}\int_{\epsilon^{-1}Y_R^{'}}\mathcal{B}^{R,\epsilon}_mf(\xi)\overline{\mathcal{B}^{R,\epsilon}_mg(\xi)}\,d\xi.
		\end{align}
		\item{\bf Bloch Decomposition in $H^{-1}(\mathbb{R}^d)$} For an element $F=u_0(x)+\sum_{j=1}^N\frac{\partial u_j(x)}{\partial x_j}$ of $H^{-1}(\mathbb{R}^d)$, the following limit exists in $L^2(\epsilon^{-1}Y_R^{'})$:
		\begin{align}\label{BlochTransform2epsilon}
		\mathcal{B}^{R,\epsilon}_mF(\xi)=\int_{\mathbb{R}^d}e^{-ix\cdot\xi}\left\{u_0(x)\overline{\phi^{R,\epsilon}_m(x;\xi)}+i\sum_{j=1}^N\xi_ju_j(x)\overline{\phi^{R,\epsilon}_m(x;\xi)}\right\}\,dx\nonumber\\-\int_{\mathbb{R}^d}e^{-ix\cdot\xi}\sum_{j=1}^Nu_j(x)\frac{\partial\overline{\phi^{R,\epsilon}_m}}{\partial x_j}(x;\xi)\,dx.
		\end{align}
		\item[] The definition above is independent of the particular representative of $F$. 
		\item Finally, for $g\in D(\mathcal{A}^{R,\epsilon})$, \begin{align}
		\label{diagonalizationepsilon}
		\mathcal{B}^{R,\epsilon}_m(\mathcal{A}^{R,\epsilon}g)(\xi)=\lambda^{R,\epsilon}_m(\xi)\mathcal{B}^{R,\epsilon}_mg(\xi).\end{align}
	\end{enumerate}
\end{theorem}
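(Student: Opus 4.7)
The plan is to reduce the theorem, at each fixed $R>0$ and $\epsilon>0$, to the standard Bloch/Floquet decomposition of a $2\pi R$-periodic second-order elliptic operator at scale $1$, and then transfer to scale $\epsilon$ by the unitary dilation $(\mathcal{U}_\epsilon g)(y) = \epsilon^{d/2} g(\epsilon y)$, which intertwines $\epsilon^{2}\mathcal{A}^{R,\epsilon}$ with $\mathcal{A}^R$ and carries $\epsilon^{-1}Y_R'$ to $Y_R'$ via $\eta = \epsilon\xi$. At the unscaled level, the key tool is the Floquet transform $(\mathcal{F}_R g)(y,\eta) = \sum_{p\in\mathbb{Z}^d} g(y + 2\pi R p)\, e^{-i\eta\cdot(y + 2\pi R p)}$, defined initially on $\mathcal{S}(\mathbb{R}^d)$. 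A standard calculation shows that $\mathcal{F}_R$ extends to a unitary map $L^2(\mathbb{R}^d) \to L^2(Y_R'; L^2_\sharp(Y_R))$ (with appropriate measure normalization) and diagonalizes $\mathcal{A}^R$ fiberwise. Since each fiber $\mathcal{A}^R(\eta)$ has compact resolvent, the normalized eigenfunctions $\{R^{-d/2}\phi_m^R(\cdot;\eta)\}_m$ form an orthonormal basis of $L^2_\sharp(Y_R)$ by the normalization in Remark~\ref{normalization}. Expanding $(\mathcal{F}_R g)(\cdot,\eta)$ in this basis and identifying the coefficients with $R^{-d/2}\mathcal{B}_m^R g(\eta)$ yields Parseval's identity (2) as the composition of Plancherel for $\mathcal{F}_R$ and Parseval on each fiber; the inversion formula (1) is then the adjoint statement, and Plancherel (3) follows by polarization. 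A routine change of variables transports (1)--(3) to the stated $\epsilon$-scaled form.

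For item (4), I would first verify that when $u_0,u_1,\ldots,u_N \in C_c^\infty(\mathbb{R}^d)$, the right-hand side of \eqref{BlochTransform2epsilon} agrees with the $L^2$-definition \eqref{BlochTransform1epsilon} applied to $F = u_0 + \sum_j \partial_j u_j$: this is exactly integration by parts in each $\partial_j u_j$-term, with boundary contributions vanishing by compact support. The resulting expression is continuous in $(u_0,\ldots,u_N)$ relative to the natural $L^2(\mathbb{R}^d)^{N+1}$ topology (after pairing with test functions in $\xi$), hence extends to every $H^{-1}$-representation by density. Independence from the choice of representative follows from integration by parts against cutoff versions of $e^{-ix\cdot\xi}\overline{\phi_m^{R,\epsilon}}$ and passage to the limit. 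For item (5), starting from $g \in D(\mathcal{A}^{R,\epsilon})$, one integrates by parts twice in $\mathcal{B}_m^{R,\epsilon}(\mathcal{A}^{R,\epsilon}g)(\xi)$ and uses that $e^{ix\cdot\xi}\phi_m^{R,\epsilon}(x;\xi)$ is an eigenfunction of $\mathcal{A}^{R,\epsilon}$ with eigenvalue $\lambda_m^{R,\epsilon}(\xi)$; self-adjointness of $\mathcal{A}^{R,\epsilon}(\xi)$ together with the reality of the spectrum (from assumption~\ref{A2}) delivers the conjugation-free formula \eqref{diagonalizationepsilon}.

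The main technical obstacle is that the integral defining $\mathcal{B}_m^{R,\epsilon}g$ in \eqref{BlochTransform1epsilon} converges only conditionally for general $g\in L^2(\mathbb{R}^d)$, so the formula must be interpreted as the $L^2(\epsilon^{-1}Y_R')$-limit of truncated integrals over an exhaustion of $\mathbb{R}^d$; this gap is filled once the Floquet transform is shown to be isometric on the dense subspace $C_c^\infty(\mathbb{R}^d)$ and extended by continuity. A subtler point is the $H^{-1}$ extension in (4): because the Bloch eigenfunctions $\phi_m^{R,\epsilon}(\cdot;\xi)$ are only $H^1_{\loc}(\mathbb{R}^d)$ and not compactly supported, the integration-by-parts step cannot be performed directly and must be justified through an exhaustion by smooth cutoffs with controlled error. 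The remainder of the argument is a direct adaptation of the periodic Bloch theory as presented in \cite{Bensoussan2011,SivajiGanesh2005}, with the only difference being the scale parameters $R$ and $\epsilon$, whose uniform dependencies are not yet needed at this stage of the paper.
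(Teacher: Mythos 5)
Your sketch is correct and follows essentially the same route as the paper: Theorem~\ref{BlochDecomposition} is stated there without proof, with the details deferred to \cite{Bensoussan2011}, \cite{Sivaji2004} and \cite{SivajiGanesh2005}, and those references argue exactly as you do, via the unitary Floquet transform, fiberwise expansion in the eigenbasis of $\mathcal{A}^R(\eta)$, and transfer to scale $\epsilon$ through $\eta=\epsilon\xi$. The technical points you flag (reading \eqref{BlochTransform1epsilon} as an $L^2(\epsilon^{-1}Y_R^{'})$-limit of truncated integrals, the density argument for the $H^{-1}$ case, and normalizing all $\phi_m^R(\cdot;\eta)$ so that $R^{-d/2}\phi_m^R(\cdot;\eta)$ is orthonormal in $L^2_\sharp(Y_R)$) are precisely the standard details treated in those references.
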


\subsection{Bloch Transform converges to Fourier Transform}

The following lemma says that as $\epsilon\to 0$, the first Bloch coefficient of a function converges to its Fourier transform, which is defined as $\displaystyle\hat{u}(\xi)=\int_{\mathbb{R}^d}u(y)e^{-ix\cdot\xi}~dy$. This is a consequence of the Lipschitz continuity of $\phi^R_1(y;\eta)$ in $\eta$ close to $\eta=0$, and the choice of normalization of the first Bloch eigenfunctions (See Remark~\ref{normalization}). For a proof, see~\cite{Conca1997}.

\begin{lemma}\label{Convergence_Bloch}
    Let $R>0$. Let $g,g^\epsilon\in L^2(\mathbb{R}^d)$ be such that the support of $g^\epsilon$ is contained in a fixed compact subset $K\subset\mathbb{R}^d$, independent of $\epsilon$. If $g^\epsilon$ converges weakly to $g$ in $L^2(\mathbb{R}^d)$, then we have
	$\chi_{\epsilon^{-1}U_R}\mathcal{B}^{R,\epsilon}_1g^\epsilon(\xi)\rightharpoonup\hat{g}(\xi)$ in $L^2(\mathbb{R}^d_\xi)$-weak.
\end{lemma}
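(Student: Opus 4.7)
The strategy, following the periodic case in \cite{Conca1997}, is to exploit two facts: (i) by the normalization of Remark~\ref{normalization}, $\phi_1^R(\cdot;0)$ is the constant function $(2\pi)^{-d/2}$, so at $\eta=0$ the Bloch transform collapses to (a multiple of) the Fourier transform; and (ii) $\eta\mapsto\phi_1^R(\cdot;\eta)$ is Lipschitz at $\eta=0$ uniformly in the base-point variable, so for $\xi\in\epsilon^{-1}U_R$ the quantity $\phi_1^R(x/\epsilon;\epsilon\xi)$ is close to $(2\pi)^{-d/2}$. First I would split
\begin{equation*}
\mathcal{B}_1^{R,\epsilon}g^\epsilon(\xi) \;=\; (2\pi)^{-d/2}\int_{\mathbb{R}^d}g^\epsilon(x)e^{-ix\cdot\xi}\,dx \;+\; I^\epsilon(\xi),
\end{equation*}
where
\begin{equation*}
I^\epsilon(\xi):=\int_{K}g^\epsilon(x)e^{-ix\cdot\xi}\Bigl[\overline{\phi_1^R(x/\epsilon;\epsilon\xi)}-(2\pi)^{-d/2}\Bigr]\,dx,
\end{equation*}
using that $\mathrm{supp}(g^\epsilon)\subset K$.

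\textbf{Main term.} Since $g^\epsilon\rightharpoonup g$ in $L^2(\mathbb{R}^d)$ and the Fourier transform is an isomorphism of $L^2$, one has $\widehat{g^\epsilon}\rightharpoonup\widehat{g}$ in $L^2(\mathbb{R}^d_\xi)$. The indicator functions $\chi_{\epsilon^{-1}U_R}$ are bounded by $1$ and converge pointwise to $1$ (for fixed $\xi$, eventually $\xi\in\epsilon^{-1}U_R$ since $U_R$ is a neighbourhood of $0$), so a standard dominated-convergence argument against any fixed $L^2$ test function shows $\chi_{\epsilon^{-1}U_R}\widehat{g^\epsilon}\rightharpoonup\widehat{g}$ weakly in $L^2(\mathbb{R}^d_\xi)$.

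\textbf{Remainder term.} The Lipschitz continuity of $\phi_1^R(y;\eta)$ at $\eta=0$ — in the pointwise sense $|\phi_1^R(y;\eta)-(2\pi)^{-d/2}|\lesssim_R|\eta|$ uniformly in $y\in\mathbb{R}^d$ for $\eta\in U_R$ — gives, for $\xi\in\epsilon^{-1}U_R$,
\begin{equation*}
|I^\epsilon(\xi)| \;\lesssim_R\; \epsilon\,|\xi|\,\|g^\epsilon\|_{L^2}\,|K|^{1/2}.
\end{equation*}
Hence $I^\epsilon\to 0$ uniformly on any bounded subset of $\mathbb{R}^d_\xi$, and therefore in $L^2_{\mathrm{loc}}(\mathbb{R}^d_\xi)$. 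On the other hand, Parseval's identity \eqref{unitary1epsilon} gives $\|\chi_{\epsilon^{-1}U_R}\mathcal{B}_1^{R,\epsilon}g^\epsilon\|_{L^2(\mathbb{R}^d_\xi)}\leq\|g^\epsilon\|_{L^2}\lesssim 1$, so the family $\chi_{\epsilon^{-1}U_R}I^\epsilon$ is bounded in $L^2(\mathbb{R}^d_\xi)$; combined with vanishing against the dense subset of compactly supported test functions, this yields $\chi_{\epsilon^{-1}U_R}I^\epsilon\rightharpoonup 0$ weakly in $L^2(\mathbb{R}^d_\xi)$. Adding the two pieces proves the lemma (up to the constant factor which is absorbed into whichever Fourier transform convention is in force).

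\textbf{Main obstacle.} The real work sits in the uniform-in-$y$ Lipschitz bound on $\eta\mapsto\phi_1^R(\cdot;\eta)$ near $\eta=0$. Because $\lambda_1^R(0)=0$ is a simple eigenvalue isolated from the rest of the spectrum of the self-adjoint operator $\mathcal{A}^R(0)$, Kato's analytic perturbation theory applied to the shifted operator family \eqref{shiftedoperator} delivers a Lipschitz (in fact real-analytic) bound in $L^2_\sharp(Y_R)$; to obtain the pointwise estimate needed above, this $L^2$ bound must be upgraded to $L^\infty$ via elliptic regularity (De~Giorgi–Nash–Moser type estimates applied to the eigenfunction equation satisfied by $\phi_1^R(\cdot;\eta)-\phi_1^R(\cdot;0)$). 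All other steps are soft functional-analytic arguments.
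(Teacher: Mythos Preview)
Your proposal is correct and matches the approach the paper indicates (it gives no self-contained proof, only citing \cite{Conca1997} and pointing to the Lipschitz continuity of $\phi_1^R(y;\eta)$ in $\eta$ near $\eta=0$ together with the normalization of Remark~\ref{normalization}). The only refinement worth noting is that the pointwise $L^\infty$-in-$y$ Lipschitz bound you propose to extract via De~Giorgi--Nash--Moser is not strictly necessary: the $H^1_\sharp(Y_R)$-valued analyticity of Theorem~\ref{analyticity} already gives an $L^2$-type Lipschitz bound on $\phi_1^R(\cdot;\eta)-\phi_1^R(\cdot;0)$, which, combined with $Y_R$-periodicity and Cauchy--Schwarz over the fixed compact set $K$, yields the same $O(\epsilon|\xi|)$ control on $I^\epsilon(\xi)$ without any elliptic-regularity upgrade.
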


\subsection{Regularity Properties of Bloch eigenvalues and eigenvectors}
In physical applications, the regularity properties of Bloch eigenvalues and eigenvectors with respect to the dual parameter $\eta\in Y_R^{'}$ plays an important role, for example, see:~\cite{AllaireVanni2004},~\cite{AllairePiatnitski2005},~\cite{AllaireRauch2011}. It is a simple consequence of the Courant-Fischer minmax principle that Bloch eigenvalues are Lipschitz continuous in the dual parameter~\cite{Conca1997}. However, such limited regularity is usually not sufficient for our purposes. We require the following theorem about the behavior of the first Bloch eigenvalue and eigenvector in a neighborhood of $0\in Y_R^{'}$.

\begin{theorem}\label{analyticity}
	There is a neighborhood $U_R\coloneqq\{\eta\in Y_R^{'}:|\eta|<\delta_R\}$, where $\delta_R$ is a positive real number, such that the first Bloch eigenvalue $\lambda_1^R(\eta)$ is analytic for $\eta\in U_R$ and the first Bloch eigenvector $\phi^R_1(\eta)\in H^1_\sharp(Y_R)$ may be chosen to be analytic for $\eta\in U_R$.
\end{theorem}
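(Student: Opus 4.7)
The plan is to invoke Kato's analytic perturbation theory for self-adjoint families of operators with compact resolvent. To $\mathcal{A}^R(\eta)$ from \eqref{shiftedoperator} I attach the sesquilinear form on $H^1_\sharp(Y_R)\times H^1_\sharp(Y_R)$,
\begin{equation*}
a^R(\eta)(u,v)=\int_{Y_R}a^R_{kl}(y)\left(\frac{\partial u}{\partial y_l}+i\eta_l u\right)\overline{\left(\frac{\partial v}{\partial y_k}+i\eta_k v\right)}\,dy,
\end{equation*}
which is a polynomial of degree two in $(\eta_1,\dots,\eta_d)$. Allowing $\eta$ to be complex, this produces a holomorphic family of closed sectorial forms on the common form domain $H^1_\sharp(Y_R)$; Kato's representation theorem then yields a self-adjoint holomorphic family of operators of type (B) on a complex neighborhood of $0\in\mathbb{C}^d$, each member of which inherits a compact resolvent from $\mathcal{A}^R(0)$.

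Next I would verify that $\lambda_1^R(0)=0$ is a simple isolated eigenvalue of $\mathcal{A}^R(0)$. By coercivity \ref{A3}, the operator $\mathcal{A}^R(0)=-\dive(A^R\nabla\cdot)$ is non-negative on $H^1_\sharp(Y_R)$, and $a^R(0)(u,u)=0$ with $u\in H^1_\sharp(Y_R)$ forces $\nabla u=0$, so $u$ is constant. Hence $\ker\mathcal{A}^R(0)=\mathbb{C}\cdot 1$ is one-dimensional, and the compact-resolvent structure produces a strictly positive spectral gap $\lambda_2^R(0)-\lambda_1^R(0)>0$.

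With a simple isolated eigenvalue of an analytic family at hand, the Kato-Rellich theorem (implemented via Riesz projections) delivers a complex disk around $0$ on which a fixed contour $\Gamma$ in the resolvent set of $\mathcal{A}^R(0)$ encloses only $\lambda_1^R(0)$, so that
\begin{equation*}
P(\eta)=-\frac{1}{2\pi i}\oint_\Gamma\bigl(\mathcal{A}^R(\eta)-z\bigr)^{-1}dz
\end{equation*}
is a rank-one holomorphic projection and $\lambda_1^R(\eta)$, the unique spectral value enclosed by $\Gamma$, is holomorphic in this disk. An analytic eigenvector is obtained by propagating the unperturbed eigenvector $(2\pi)^{-d/2}$ through $P(\eta)(2\pi)^{-d/2}$, which is non-zero for $\eta$ close to $0$ since $P(0)$ is the identity on its range, and then rescaling to enforce the normalization $\|\phi_1^R(\cdot;\eta)\|_{L^2_\sharp(Y_R)}=R^{d/2}$ of Remark~\ref{normalization}; because the normalization factor is real-analytic and non-vanishing near $0$, this preserves analyticity. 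Restricting to the real slice and shrinking to a ball $\{|\eta|<\delta_R\}\subset Y_R'$ gives $U_R$.

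The main point to watch is that the statement is purely qualitative in $\delta_R$: the quantitative radius furnished by Kato's theorem is controlled by the spectral gap $\lambda_2^R(0)$ together with a form-relative bound of the perturbation $a^R(\eta)-a^R(0)$, whose constant degrades through a Poincar\'e-type inequality on the cube $Y_R$. This dependence $\delta_R\downarrow 0$ as $R\to\infty$ is immaterial here but will be the binding constraint in the passage $R\to\infty$ performed in Sections~\ref{section5} and~\ref{section6}.
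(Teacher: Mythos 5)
Your argument is correct and follows essentially the route the paper itself points to: the statement is quoted from the literature, and the cited proof in~\cite{Sivaji2004} is precisely the Kato--Rellich/analytic-perturbation argument you give (holomorphic family of type (B) on the fixed form domain $H^1_\sharp(Y_R)$, simplicity of $\lambda_1^R(0)=0$ with eigenspace the constants, and a rank-one Riesz projection yielding joint analyticity of the eigenvalue and a normalized eigenvector near $\eta=0$), the alternative in~\cite{Conca1997} differing only in its use of infinite-dimensional determinants. Your closing remark that $\delta_R$ is tied to the gap $\lambda_2^R(0)$ and degrades as $R\to\infty$ matches the paper's own discussion following the theorem.
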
 

A proof of Theorem~\ref{analyticity} that uses the notion of infinite-dimensional determinants can be found in~\cite{Conca1997}. Another proof that uses the Kato-Rellich Theorem~\cite{Reed1978},~\cite{Kato1995} may be found in~\cite{Sivaji2004}.

\begin{remark}
	The radius $\delta_R$ of the neighborhood $U_R$ depends on the gap between the first and second Bloch eigenvalues of the operator $\mathcal{A}^R$. The limit operator of $\mathcal{A}^R$ is the almost periodic operator $\mathcal{A}$ which often has a Cantor-like spectrum~\cite{Damanik2019}. Hence, we expect the spectral gap to vanish in the limit $R\to\infty$. Therefore, the neighborhood $U_R$ is expected to shrink to $0$ in the limit $R\to\infty$. 
\end{remark}

\subsection{Derivatives of the first Bloch eigenvalue and eigenfunction}

In the theory of periodic homogenization~\cite{Bensoussan2011}, homogenized coefficients are given in terms of solutions of the cell problem which is an equation posed on the basic periodic cell. For the $R^{th}$ periodic approximation~\eqref{Approx_n}, we recall the cell problem and the homogenized coefficients below.

The homogenized coefficients for the $R^{th}$ periodic approximation are given by:
\begin{align}\label{homoR}
a_{kl}^{R,*}=\frac{1}{|Y_R|}\int_{Y_R}a_{kl}^R(y)\,dy+\frac{1}{|Y_R|}\int_{Y_R}a_{kp}^R(y)\frac{\partial w^{R,l}}{\partial y_p}\,dy,
\end{align}

where $w^{R,p}\in H^1_\sharp(Y_R)$ satisfy the following cell problems for $1\leq p\leq d$:

\begin{align}\label{CellP}
\mathcal{A}^Rw^{R,p}=-\frac{\partial}{\partial y_k}\left(a^R_{kl}\left(y\right)\frac{\partial w^{R,p}}{\partial y_l}\right)=\frac{\partial a_{lp}^R}{\partial y_l}(y)\,\mbox{ in }\,Y_R.
\end{align}

The functions $w^{R,p}$ are called {\it correctors} and $w^R$ is called {\it corrector field}. We recall that $\lambda^R_1(\eta)$ and $\phi_1^R(\eta)$ are analytic in $U_R\subset Y_R^{'}$. The proof of the following theorem is standard and may be found in~\cite{Conca1997} or~\cite{Sivaji2004}.

\begin{theorem}\label{Hessian}
	The first Bloch eigenvalue and eigenfunction of the $R^{th}$ periodic approximation $\mathcal{A}^R$ satisfy:
	\begin{enumerate}
		\item $\lambda^R_1(0)=0$.
		\item The eigenvalue $\lambda^R_1(\eta)$ has a critical point at $\eta=0$, i.e., \begin{align}\frac{\partial \lambda^R_1}{\partial \eta_s}(0)=0, \forall s=1,2,\ldots,d.\end{align}
		\item For $s=1,2,\ldots,d,$ the derivative of the eigenvector $(\partial \phi_1^R/\partial\eta_s)(0)$ satisfies:
		
		$(\partial \phi_1^R/\partial\eta_s)(y;0)-i\phi^R_1(y;0)w^{R,s}(y)$ is a constant in $y$.
		\item The Hessian of the first Bloch eigenvalue at $\eta=0$ is twice the homogenized matrix $a_{kl}^{R,*}$:
		\begin{align}\label{identificationofapproximatehomo}
		\frac{1}{2}\frac{\partial^2\lambda^R_1}{\partial\eta_k\partial\eta_l}(0)=a_{kl}^{R,*}.
		\end{align}
	\end{enumerate}
\end{theorem}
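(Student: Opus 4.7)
The plan is to exploit the formula \eqref{shiftedoperator} for the shifted operator $\mathcal{A}^R(\eta)$ by differentiating the eigenvalue equation $\mathcal{A}^R(\eta)\phi_1^R(\cdot;\eta)=\lambda_1^R(\eta)\phi_1^R(\cdot;\eta)$ in $\eta$ at $\eta=0$, using the analyticity guaranteed by Theorem~\ref{analyticity}. First, for part (1), I observe that $\mathcal{A}^R(0)=\mathcal{A}^R$ is nonnegative on $L^2_\sharp(Y_R)$ by coercivity, and constants lie in its kernel; combined with the normalization $\phi_1^R(y;0)=(2\pi)^{-d/2}$ from Remark~\ref{normalization}, this forces $\lambda_1^R(0)=0$.

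For parts (2) and (3), I expand $\mathcal{A}^R(\eta)$ and compute
\[
\frac{\partial\mathcal{A}^R}{\partial\eta_s}(0)v=-i\frac{\partial}{\partial y_k}\bigl(a_{ks}^R v\bigr)-i\, a_{sl}^R\frac{\partial v}{\partial y_l},\qquad \frac{\partial^2\mathcal{A}^R}{\partial\eta_k\partial\eta_l}(0)v=2a_{kl}^R v,
\]
using the symmetry of $A^R$. Differentiating the eigenvalue equation once and evaluating at $\eta=0$ gives
\[
\mathcal{A}^R\,\frac{\partial\phi_1^R}{\partial\eta_s}(\cdot;0)+\frac{\partial\mathcal{A}^R}{\partial\eta_s}(0)\phi_1^R(\cdot;0)=\frac{\partial\lambda_1^R}{\partial\eta_s}(0)\,\phi_1^R(\cdot;0).
\]
Taking the $L^2_\sharp(Y_R)$-inner product against the constant $\phi_1^R(\cdot;0)$ and using the self-adjointness of $\mathcal{A}^R$ together with $\mathcal{A}^R\phi_1^R(\cdot;0)=0$ kills the first term; the middle term reduces to $\phi_1^R(\cdot;0)^2\int_{Y_R}\partial_k a_{ks}^R\,dy$, which vanishes by periodicity. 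This yields part (2). Returning to the differentiated equation, $\partial\phi_1^R/\partial\eta_s(\cdot;0)$ then satisfies $\mathcal{A}^R\bigl(\partial\phi_1^R/\partial\eta_s(\cdot;0)\bigr)=i\phi_1^R(\cdot;0)\,\partial a_{ks}^R/\partial y_k$, which is $i\phi_1^R(\cdot;0)$ times the right-hand side of the corrector equation \eqref{CellP}. By linearity and the fact that the kernel of $\mathcal{A}^R$ acting on $H^1_\sharp(Y_R)$ consists of constants, this gives part (3).

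For part (4), I differentiate the eigenvalue equation a second time at $\eta=0$, use parts (1) and (2) to discard the terms containing $\lambda_1^R(0)$ and $\partial\lambda_1^R/\partial\eta_s(0)$, and take the inner product with $\phi_1^R(\cdot;0)$. The term $\mathcal{A}^R(\partial^2\phi_1^R/\partial\eta_k\partial\eta_l)(0)$ disappears by self-adjointness as before. The remaining pieces give
\[
\tfrac{\partial^2\lambda_1^R}{\partial\eta_k\partial\eta_l}(0)\,\|\phi_1^R(\cdot;0)\|^2 = \bigl\langle 2a_{kl}^R\phi_1^R(\cdot;0),\phi_1^R(\cdot;0)\bigr\rangle+\bigl\langle\tfrac{\partial\mathcal{A}^R}{\partial\eta_k}(0)\tfrac{\partial\phi_1^R}{\partial\eta_l}(0),\phi_1^R(\cdot;0)\bigr\rangle+\bigl\langle\tfrac{\partial\mathcal{A}^R}{\partial\eta_l}(0)\tfrac{\partial\phi_1^R}{\partial\eta_k}(0),\phi_1^R(\cdot;0)\bigr\rangle.
\]
Substituting $\partial\phi_1^R/\partial\eta_s(\cdot;0)=i\phi_1^R(\cdot;0)w^{R,s}+\mathrm{const}$ from part (3), integrating the divergence pieces to zero by periodicity, and dividing by $\|\phi_1^R(\cdot;0)\|^2=R^d$ yields
\[
\tfrac{1}{2}\tfrac{\partial^2\lambda_1^R}{\partial\eta_k\partial\eta_l}(0) = \dashint_{Y_R}a_{kl}^R\,dy + \tfrac{1}{2}\dashint_{Y_R}a_{km}^R\partial_m w^{R,l}\,dy + \tfrac{1}{2}\dashint_{Y_R}a_{lm}^R\partial_m w^{R,k}\,dy.
\]
To match \eqref{homoR}, I invoke the classical symmetry $\int_{Y_R}a_{km}^R\partial_m w^{R,l}=\int_{Y_R}a_{lm}^R\partial_m w^{R,k}$, obtained by testing the corrector equations for $w^{R,l}$ and $w^{R,k}$ against each other and using $a_{kl}^R=a_{lk}^R$. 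The main obstacle I anticipate is bookkeeping: keeping track of the constants that arise from the normalization $\phi_1^R(\cdot;0)=(2\pi)^{-d/2}$, $|Y_R|=(2\pi R)^d$, and the imaginary factors in parts (3)–(4), together with verifying that the cross terms from $\partial\mathcal{A}^R/\partial\eta_s(0)$ acting on $\psi_l$ assemble correctly into the symmetric corrector-weighted average above.
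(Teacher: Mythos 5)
Your proposal is correct and follows essentially the same route as the paper's cited standard proof (Conca--Vanninathan and Sivaji Ganesh--Vanninathan): differentiate the eigenvalue equation for $\mathcal{A}^R(\eta)$ at $\eta=0$, use the normalization of $\phi_1^R(\cdot;0)$, identify $(\partial\phi_1^R/\partial\eta_s)(\cdot;0)$ with the corrector up to a constant, and read off the Hessian as the symmetrized corrector-weighted average, which equals $a^{R,*}_{kl}$ by the usual symmetry of the homogenized tensor. The only caveat is presentational: since $a^R_{kl}$ is merely $L^\infty$, expressions like $\partial_k a^R_{ks}$ should be interpreted through the weak (variational) formulation, exactly as in the cited references.
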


\subsection{Boundedness of Corrector Field}\label{boundednessofcellfunctions}
We will show that the sequence $(\nabla w^{R,p})_{R>0}$ is bounded in $B^2(\mathbb{R}^d)$, independent of $R$. We know that for each $R>0$ and $1\leq p\leq d$, $\nabla w^{R,p}\in (L^2_\sharp(Y_R))^d\subset (B^2(\mathbb{R}^d))^d$ satisfies
\begin{align}\label{cellproblemforperiodicapproximation}
\mathcal{M}\left(A^R\nabla w^{R,p}\nabla w^{R,p}\right)&=-\sum_{l=1}^d\mathcal{M}\left(a^R_{lp}\frac{\partial w^{R,p}}{\partial y_l}\right)
\end{align}
Using the coercivity and boundedness of the matrix $A$, we obtain:
\begin{align*}
\alpha||\nabla w^{R,p}||^2_{(L^2_\sharp(Y_R))^d}&\leq C ||\nabla w^{R,p}||_{(L^2_\sharp(Y_R))^d}
\end{align*} From the last equation, we obtain the norm-boundedness of $(\nabla w^{R,p})$ in $(L^2_\sharp(Y_R))^d$ and hence in $(B^2(\mathbb{R}^d))^d$. 

\subsection{Boundedness of homogenized tensors}\label{boundednessoftensors}

Due to the boundedness of derivatives of the correctors proved in Subsection~\ref{boundednessofcellfunctions}, the sequence of numbers $a_{kl}^{R,*}$, defined in~\eqref{homoR}, is bounded independently of $R$. Further, it follows from the identification~\eqref{identificationofapproximatehomo} that the sequence of numbers $\frac{1}{2}\frac{\partial^2\lambda^R_1}{\partial\eta_k\partial\eta_l}(0)$ is bounded. Hence, there is a subsequence, still labeled by $R$, for which the sequence $\frac{1}{2}\frac{\partial^2\lambda^R_1}{\partial\eta_k\partial\eta_l}(0)$ converges. We shall call this limit as $a_{kl}^*$, i.e.,\begin{align}\lim_{R\to\infty}\frac{\partial^2\lambda^R_1}{\partial\eta_k\partial\eta_l}(0)=2a_{kl}^*.\label{somecoefficients}\end{align}

\section{Homogenization Result}\label{section5}
In this section, we shall state the homogenization result for almost periodic media and prove it using the Bloch wave method. It will be seen in a further section that the coefficients $a_{kl}^*$, defined in~\eqref{somecoefficients}, coincide with the homogenized coefficients for almost periodic media~\cite{Oleinik1982}. In this section, we shall assume summation over repeated indices for ease of notation.

\begin{theorem}\label{homog}
	Let $\Omega$ be an arbitrary domain in $\mathbb{R}^d$ and $f\in L^2(\Omega)$. Let $u^\epsilon\in H^1(\Omega)$ be such that $u^\epsilon$ converges weakly to $u^*$ in $H^1(\Omega)$, and
	\begin{align}\label{equation}
	\mathcal{A}^\epsilon u^\epsilon=f\,\mbox{in}\,~\Omega.
	\end{align}
	Then
	\begin{enumerate}
		\item For all $k=1,2,\ldots,d$, we have the following convergence of fluxes:
		\begin{align}
		a^\epsilon_{kl}(x)\frac{\partial u^\epsilon}{\partial x_l}(x)\rightharpoonup a_{kl}^*\frac{\partial u^*}{\partial x_l}(x) \mbox{ in } L^2(\Omega)\mbox{-weak}.
		\end{align}
		\item The limit $u^*$ satisfies the homogenized equation:
		\begin{align}\label{homoperator}
		\mathcal{A}^{hom}u^*=-\frac{\partial}{\partial x_k}\left(a^*_{kl}\frac{\partial u^*}{\partial x_l}\right)=f\,\mbox{ in }\,\Omega,
		\end{align}
	\end{enumerate} where $(a^*_{kl})_{1\leq k,l\leq d}$ are given in~\eqref{somecoefficients}.
\end{theorem}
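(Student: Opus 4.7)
The plan is to run a Bloch wave analysis of each periodic approximation $\mathcal{A}^{R,\epsilon}$, pass to the limit $\epsilon\to 0$ first at fixed $R$, and then send $R\to\infty$ using $a^{R,*}_{kl}\to a^*_{kl}$ from~\eqref{somecoefficients}. The fluxes $\sigma^\epsilon_k\coloneqq a^\epsilon_{kl}\,\partial u^\epsilon/\partial x_l$ are bounded in $L^2_{\loc}(\Omega)$ by~\ref{A1} and the weak $H^1$-convergence of $u^\epsilon$, so up to a subsequence $\sigma^\epsilon_k\rightharpoonup\sigma^*_k$. Both conclusions of the theorem reduce to showing $\sigma^*_k=a^*_{kl}\,\partial u^*/\partial x_l$ when tested against $\mathcal{D}(\Omega)$.

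Fix $\psi\in\mathcal{D}(\Omega)$ and $R>0$. Testing the weak form of $\mathcal{A}^\epsilon u^\epsilon=f$ against a Bloch-adapted test function of the form $\psi(x)\,e^{ix\cdot\xi}\,\overline{\phi^{R,\epsilon}_1(x;\xi)}$ with $\xi\in\epsilon^{-1}U_R$, integrated against a suitable cutoff in $\xi$, and combining the diagonalization~\eqref{diagonalizationepsilon} for the periodic operator $\mathcal{A}^{R,\epsilon}$, the expansion
\[
\lambda^{R,\epsilon}_1(\xi)=\epsilon^{-2}\lambda^R_1(\epsilon\xi)=a^{R,*}_{kl}\xi_k\xi_l+O\bigl(\epsilon|\xi|^3\bigr)
\]
from Theorems~\ref{analyticity} and~\ref{Hessian}, the corrector identification of Theorem~\ref{Hessian}(3) (which re-expresses derivatives of $\phi^R_1$ at $\xi=0$ through the correctors $w^{R,s}$), the Hessian identity~\eqref{identificationofapproximatehomo}, and Lemma~\ref{Convergence_Bloch} (which identifies the weak limit of the first Bloch coefficients with the Fourier transform), one obtains in the limit $\epsilon\to 0$
\[
\int_\Omega a^{R,*}_{kl}\,\frac{\partial u^*}{\partial x_l}\,\frac{\partial \psi}{\partial x_k}\,dx + \mathcal{E}^R(\psi)= \int_\Omega f\,\psi\,dx,
\]
where $\mathcal{E}^R(\psi)$ collects the error due to the mismatch between $\mathcal{A}^\epsilon$ and $\mathcal{A}^{R,\epsilon}$ inside the Bloch identity. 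The contribution of higher Bloch modes is deferred to Section~\ref{section7}. Sending $R\to\infty$, using $a^{R,*}_{kl}\to a^*_{kl}$ together with the uniform bounds from Subsections~\ref{boundednessofcellfunctions} and~\ref{boundednessoftensors}, yields~\eqref{homoperator} and simultaneously identifies the weak limit of the fluxes.

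The main obstacle is twofold. First, by the remark after Theorem~\ref{analyticity}, the analyticity neighborhood $U_R$ is expected to shrink to $\{0\}$ as $R\to\infty$ because of the possibly Cantor-like spectrum of $\mathcal{A}$; consequently the Bloch test functions are valid only on a dual patch of vanishing radius, which forces the order of limits ($\epsilon\to 0$ at fixed $R$, then $R\to\infty$) and precludes any uniform-in-$R$ control of $U_R$. Second, the replacement of $a^\epsilon$ by $a^{R,\epsilon}$ needed to deploy Bloch machinery introduces an error $(a^\epsilon-a^{R,\epsilon})\nabla u^\epsilon$ that, for fixed $R$ as $\epsilon\to 0$, does not vanish naively: the argument $x/\epsilon$ ranges over many periodicity cells on which $a$ and $a^R$ are genuinely out of phase, so no $L^\infty$ bound is available. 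The crucial observation is that the first Bloch eigenvector of $\mathcal{A}^{R,\epsilon}$, via Theorem~\ref{Hessian}(3), already encodes the $R$-approximate corrector $w^R$, so that $\mathcal{E}^R(\psi)$ is absorbed into the effective tensor $a^{R,*}$ and only enters the final limit through the convergence $a^{R,*}\to a^*$ handled in Section~\ref{section6}.
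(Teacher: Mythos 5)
Your overall route coincides with the paper's: localize with a cutoff, take the first Bloch transform of the equation written for the periodized operator $\mathcal{A}^{R,\epsilon}$, use the analyticity of $\lambda^R_1,\phi^R_1$ near $\eta=0$ (Theorems~\ref{analyticity} and~\ref{Hessian}) together with Lemma~\ref{Convergence_Bloch} to pass to the limit $\epsilon\to 0$ at fixed $R$, and then send $R\to\infty$ using the convergence of $a^{R,*}_{kl}$ to $a^*_{kl}$. The order of limits and the identification of $\sigma^*_k$ with $a^*_{kl}\,\partial u^*/\partial x_l$ are also as in the paper. The gap lies in how you dispose of the mismatch term. You correctly point out that $(a^\epsilon-a^{R,\epsilon})\nabla u^\epsilon$ does not vanish naively at fixed $R$, but your proposed resolution --- that Theorem~\ref{Hessian}(3) makes the first Bloch eigenvector encode the corrector $w^{R,s}$ so that $\mathcal{E}^R(\psi)$ is \emph{absorbed} into $a^{R,*}$ and enters only through $a^{R,*}\to a^*$ --- is not valid. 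The corrector identification in Theorem~\ref{Hessian}(3) is exactly what converts the term $h^{R,\epsilon}$ of~\eqref{heps} (which involves $a^{R,\epsilon}$ and derivatives of the cutoff) into the flux $-i\xi_s a^{R,*}_{sl}(\cdot)$; it says nothing about the extra source term $(a^\epsilon-a^{R,\epsilon})\nabla u^\epsilon$, which is a genuinely separate contribution to the localized equation: since $a^{R,\epsilon}=a^\epsilon$ only on $\epsilon Y_R$, for fixed $R$ this term lives on essentially all of the support of the cutoff as $\epsilon\to 0$, and nothing in the convergence of the homogenized tensors controls it.

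What the paper does instead is isolate this term as $l^{R,\epsilon}$ in~\eqref{leps}, take its first Bloch transform, and pass to the limit $\epsilon\to 0$ using the same eigenvector expansion; this produces weak $L^2$ limits $v^R_k$ and $z^R_s$ that persist in the Fourier-space equation~\eqref{Bloch2}. These are then removed only in the second limit, by weak lower semicontinuity of the $L^2$ norm combined with the sup-norm bound $\|v^R_k\|_{L^2(\mathbb{R}^d)}\le\liminf_{\epsilon\to 0}\|v^{R,\epsilon}_k\|_{L^2(\mathbb{R}^d)}\le C\max_l\|a^{R}_{kl}-a_{kl}\|_{L^\infty(\epsilon K)}$, i.e.\ by a quantitative statement about where the periodization agrees with $a$, and similarly for $z^R_s$. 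Your write-up contains no estimate of this kind, so the final step $R\to\infty$ does not close as written: you would be passing to the limit in an identity still containing an unquantified error $\mathcal{E}^R(\psi)$. To repair the argument you need to track the mismatch term explicitly through the $\epsilon$-limit (as an $H^{-1}$ datum via~\eqref{BlochTransform2epsilon}) and then prove its limit vanishes as $R\to\infty$, which is precisely the content of the paper's treatment of $l^{R,\epsilon}$.
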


The proof of Theorem~\ref{homog} is divided into the following steps. We begin by localizing the equation~\eqref{equation} which is posed on $\Omega$, so that it is posed on $\mathbb{R}^d$. We take the first Bloch transform $\mathcal{B}^{R,\epsilon}_1$ of this equation and pass to the limit $\epsilon\to 0$, followed by the limit $R\to\infty$. The proof relies on the analyticity of the first Bloch eigenvalue and eigenfunction in a neighborhood of $0\in Y_R^{'}$. The limiting equation is an equation in Fourier space. The homogenized equation is obtained by taking the inverse Fourier transform.

\subsection{Localization}

Let $\psi_0$ be a fixed smooth function supported in a compact set $K\subset\mathbb{R}^d$. Since $u^\epsilon$ satisfies $\mathcal{A}^\epsilon u^\epsilon=f$, $\psi_0 u^\epsilon$ satisfies
\begin{align}\label{local}
\mathcal{A}^{R,\epsilon}(\psi_0 u^\epsilon)(x)=\psi_0f(x)+g^\epsilon(x)+h^{R,\epsilon}(x)+l^{R,\epsilon}(x)\,\mbox{ in }\,\mathbb{R}^d,
\end{align} where
\begin{align}
g^\epsilon(x)&\coloneqq-\frac{\partial \psi_0}{\partial x_k}(x)a^\epsilon_{kl}(x)\frac{\partial u^\epsilon}{\partial x_l}(x),\label{geps}\\
h^{R,\epsilon}(x)&\coloneqq-\frac{\partial}{\partial x_k}\left(\frac{\partial \psi_0}{\partial x_l}(x)a^{R,\epsilon}_{kl}(x)u^\epsilon(x)\right),\label{heps}\\
l^{R,\epsilon}(x)&\coloneqq-\frac{\partial}{\partial x_k}\left(\psi_0(x)\left(a^{R,\epsilon}_{kl}(x)-a^\epsilon_{kl}(x)\right)\frac{\partial u^\epsilon}{\partial x_l}(x)\right)\label{leps}.
\end{align}

While the sequence $g^\epsilon$ is bounded in $L^2(\mathbb{R}^d)$, the sequences $h^{R,\epsilon}$ and $l^{R,\epsilon}$ are bounded in $H^{-1}(\mathbb{R}^d)$. Taking the first Bloch transform of both sides of the equation~\eqref{local}, we obtain for $\xi\in\epsilon^{-1}U_R$ a.e.
\begin{align}\label{Bloch}
\lambda^{R,\epsilon}_1(\xi)\mathcal{B}^{R,\epsilon}_1(\psi_0 u^\epsilon)(\xi)=\mathcal{B}^{R,\epsilon}_1(\psi_0 f)(\xi)+\mathcal{B}^{R,\epsilon}_1g^\epsilon(\xi)+\mathcal{B}^{R,\epsilon}_1h^{R,\epsilon}(\xi)+\mathcal{B}^{R,\epsilon}_1l^{R,\epsilon}(\xi)
\end{align}

We shall now pass to the limit $\epsilon\to 0$, followed by the limit $R\to\infty$ in the equation~\eqref{Bloch}.

\subsection{Limit $\epsilon\to 0$}
\subsubsection{Limit of $\lambda^{R,\epsilon}_1(\xi)\mathcal{B}^{R,\epsilon}_1(\psi_0 u^\epsilon)$}
We substitute the power series expansion of the first Bloch eigenvalue about $\eta=0$ in $\lambda^{R,\epsilon}_1(\xi)\mathcal{B}^{R,\epsilon}_1(\psi_0 u^\epsilon)$ and then pass to the limit $\epsilon\to 0$ in $L^2_{\loc}(\mathbb{R}^d_\xi)$-weak by applying Lemma~\ref{Convergence_Bloch} to obtain:
\begin{align}\label{convergence_ueps}
\frac{1}{2}\frac{\partial^2\lambda^R_1}{\partial\eta_s\partial\eta_t}(0)\xi_s\xi_t\widehat{\psi_o u^*}(\xi).
\end{align}
\subsubsection{Limit of $\mathcal{B}^{R,\epsilon}_1(\psi_0 f)$}
A simple application of Lemma~\ref{Convergence_Bloch} yields the convergence of $\mathcal{B}^{R,\epsilon}_1(\psi_0 f)$ to $(\psi_0 f)^{\bf\widehat{}}$ in $L^2_{\loc}(\mathbb{R}^d_\xi)$-weak.
\subsubsection{Limit of $\mathcal{B}^{R,\epsilon}_1g^\epsilon$}
The sequence $g^\epsilon$ as defined in~\eqref{geps} is bounded in $L^2(\mathbb{R}^d)$ and hence has a weakly convergent subsequence with limit $g^*\in L^2(\mathbb{R}^d)$. This sequence is supported in a fixed set $K$. Also, note that the sequence $\displaystyle\sigma_k^\epsilon(x)\coloneqq a^\epsilon_{kl}(x)\frac{\partial u^\epsilon}{\partial x_l}(x)$ is bounded in $L^2(\Omega)$, hence has a weakly convergent subsequence whose limit is denoted by $\sigma^*_k$ for $k=1,2,\ldots,d$. Extend $\sigma^*_k$ by zero outside $\Omega$ and continue to denote the extension by $\sigma^*_k$. Thus, $g^*$ is given by $-\frac{\partial \psi_0}{\partial x_k}\sigma^*_k$. Therefore, by Lemma~\ref{Convergence_Bloch}, we obtain the following convergence in $L^2_{\loc}(\mathbb{R}^d_\xi)$-weak:
\begin{align}\label{convergence_geps}
\chi_{\epsilon^{-1}U_R}(\xi)\mathcal{B}^{R,\epsilon}_1g^\epsilon(\xi)\rightharpoonup-\left(\frac{\partial \psi_0}{\partial x_k}(x)\sigma^*_k(x)\right)^{\bf\widehat{}}(\xi).
\end{align}

Notice that the limit is independent of $R$.

\subsubsection{Limit of $\mathcal{B}_1^{R,\epsilon}h^{R,\epsilon}$}

We have the following weak convergence for $\mathcal{B}_1^{R,\epsilon}h^{R,\epsilon}$ in $L^2_{\loc}(\mathbb{R}^d_{\xi})$.
\begin{align}\label{convergence_heps}
\lim_{\epsilon\to 0}\,\chi_{\epsilon^{-1}U_R}(\xi)\mathcal{B}^{R,\epsilon}_1h^{R,\epsilon}(\xi)=-i\xi_ka^{R,*}_{kl}\left(\frac{\partial \psi_0}{\partial x_l}(x)u^*(x)\right)^{\bf\widehat{}}(\xi)
\end{align}

We shall prove this in the following steps.

\paragraph{Step 1} By the definition of the Bloch transform~\eqref{BlochTransform2epsilon} for elements of $H^{-1}(\mathbb{R}^d)$, we have
\begin{align}\label{heps1}
\mathcal{B}^{R,\epsilon}_1h^{R,\epsilon}(\xi)=-i\xi_k\int_{\mathbb{R}^d}e^{-ix\cdot\xi}\frac{\partial \psi_0}{\partial x_l}(x)a^{R,\epsilon}_{kl}(x)u^\epsilon(x)\overline{\phi^R_1\left(\frac{x}{\epsilon};\epsilon\xi\right)}\,dx\nonumber\\+\int_{\mathbb{R}^d}e^{-ix\cdot\xi}\frac{\partial \psi_0}{\partial x_l}(x)a^{R,\epsilon}_{kl}(x)u^\epsilon(x)\frac{\partial\overline{\phi^{R}_1}}{\partial x_k}\left(\frac{x}{\epsilon};\epsilon\xi\right)\,dx.
\end{align}

\paragraph{Step 2} The first term on RHS of~\eqref{heps1} is the Bloch transform of $-i\xi_k\frac{\partial \psi_0}{\partial x_l}(x)a^{R,\epsilon}_{kl}(x)u^\epsilon(x)$ which converges weakly to $-i\xi_k\mathcal{M}(a^R_{kl})\left(\frac{\partial \psi_0}{\partial x_l}(x)u^*(x)\right)$.

\paragraph{Step 3} Now, we analyze the second term on RHS of~\eqref{heps1}. In order to do this, we use the analyticity of first Bloch eigenfunction with respect to the dual parameter $\eta$ near $0$. We have the following power series expansion in $H^1_\sharp(Y_R)$ for $\phi_1^R(\eta)$ about $\eta=0$:
\begin{align}
\phi_1^R(y;\eta)=\phi_1^R(y;0)+\eta_s\frac{\partial \phi^R_1}{\partial \eta_s}(y;0)+\gamma^R(y;\eta).
\end{align}
We know that $\gamma^R(y;0)=0$ and $(\partial \gamma^R/\partial \eta_s)(y;0)=0$, therefore, $\gamma^R(\cdot;\eta)=O(|\eta|^2)$ in $L^\infty(U_R;H^1_\sharp(Y_R))$. We also have $(\partial\gamma^R/\partial y_k)(\cdot;\eta)=O(|\eta|^2)$ in $L^\infty(U_R;L^2_\sharp(Y_R))$.
Now,  
\begin{align}
\phi_1^{R,\epsilon}(x;\xi)=\phi_1^R\left(\frac{x}{\epsilon};\epsilon\xi\right)=\phi_1^R\left(\frac{x}{\epsilon};0\right)+\epsilon\xi_s\frac{\partial \phi^R_1}{\partial \eta_s}\left(\frac{x}{\epsilon};0\right)+\gamma^R\left(\frac{x}{\epsilon};\epsilon\xi\right).
\end{align}
Differentiating the last equation with respect to $x_k$, we obtain
\begin{align}\label{derivativeofphi}
\frac{\partial}{\partial x_k}\phi_1^R\left(\frac{x}{\epsilon};\epsilon\xi\right)=\xi_s\frac{\partial}{\partial y_k}\frac{\partial \phi^R_1}{\partial \eta_s}\left(\frac{x}{\epsilon};0\right)+\epsilon^{-1}\frac{\partial \gamma^R}{\partial y_k}\left(\frac{x}{\epsilon};\epsilon\xi\right).
\end{align}
For $\xi$ belonging to the set $\{\xi:\epsilon\xi\in U_R\mbox{ and }|\xi|\leq M\}$, we have
\begin{align}\label{gammaepsilon2}
\frac{\partial \gamma^R}{\partial y_k}(\cdot;\epsilon\xi)=O(|\epsilon\xi|^2)=\epsilon^2O(|\xi|^2)\leq CM^2\epsilon^2.
\end{align}
As a consequence,
\begin{align}
\epsilon^{-2}\frac{\partial \gamma^R}{\partial y_k}(x/\epsilon;\epsilon\xi)\in L^\infty_{\loc}(\mathbb{R}^d_\xi;L^2_\sharp(\epsilon Y_R)).
\end{align}
The second term on the RHS of~\eqref{heps1} is given by
\begin{align}\label{secondterm}
\chi_{\epsilon^{-1}U_R}(\xi)\int_K e^{-ix\cdot\xi}\frac{\partial\psi_0}{\partial x_l}(x)a^R_{kl}\left(\frac{x}{\epsilon}\right)u^\epsilon(x)\frac{\partial}{\partial x_k}\left(\overline{\phi^R_1}\left(\frac{x}{\epsilon};\epsilon\xi\right)\right)\,dx.
\end{align}
Substituting~\eqref{derivativeofphi} in~\eqref{secondterm}, we obtain
\begin{align}
\chi_{\epsilon^{-1}U_R}(\xi)\int_K e^{-ix\cdot\xi}\frac{\partial\psi_0}{\partial x_l}(x)a^R_{kl}\left(\frac{x}{\epsilon}\right)u^\epsilon(x)\left[\xi_s\frac{\partial}{\partial y_k}\frac{\partial \phi^R_1}{\partial \eta_s}\left(\frac{x}{\epsilon};0\right)+\epsilon^{-1}\frac{\partial \gamma^R}{\partial y_k}\left(\frac{x}{\epsilon};\epsilon\xi\right)\right]\,dx.
\end{align}
In the last expression, the term involving $\gamma^R$ goes to zero as $\epsilon\to 0$ in view of~\eqref{gammaepsilon2}, whereas the other term has the following limit due to the strong convergence of $u^\epsilon$ and weak convergence of $\left(a_{kl}^R(x/\epsilon)\frac{\partial}{\partial x_k}\left(\frac{\partial\phi^R_1}{\partial\eta_s}(x/\epsilon;0)\right)\right)$:
\begin{align}\label{secondterm2}
\mathcal{M}\left(a_{kl}^R(y)\frac{\partial}{\partial y_k}\left(\frac{\partial\phi^R_1}{\partial\eta_s}(y;0)\right)\right)\xi_s\int_{\mathbb{R}^d}e^{-ix\cdot\xi}\frac{\partial\psi_0}{\partial x_l}(x)u^*(x)\,dx.
\end{align}
\paragraph{Step 4} By Theorem~\ref{Hessian} and Remark~\ref{normalization}, it follows that 
\begin{align}\label{equivalence2}
\mathcal{M}\left(a_{kl}^R(y)\frac{\partial}{\partial y_k}\left(\frac{\partial\phi^R_1}{\partial\eta_s}(y;0)\right)\right)=-i(2\pi)^{-d/2}\mathcal{M}\left(a_{kl}^R(y)\frac{\partial w^{R,s}}{\partial y_k}(y)\right).
\end{align}
Therefore, we have the following convergence in $L^2_{\loc}(\mathbb{R}^d_\xi)$-weak:
\begin{align}
\chi_{\epsilon^{-1}U_R}(\xi)\mathcal{B}^{R,\epsilon}_1h^{R,\epsilon}(\xi)&\rightharpoonup-i\xi_s\left\{\mathcal{M}(a_{kl}^R)+\mathcal{M}\left(a_{kl}^R(y)\frac{\partial w^{R,s}}{\partial y_k}(y)\right)\right\}\left(\frac{\partial \psi_0}{\partial x_l}(x)u^*(x)\right)^{\bf\widehat{}}(\xi)\nonumber\\
&=-i\xi_s a_{kl}^{R,*} \left(\frac{\partial \psi_0}{\partial x_l}(x)u^*(x)\right)^{\bf\widehat{}}(\xi)
\end{align}

\subsubsection{Limit of $\mathcal{B}_1^{R,\epsilon}l^{R,\epsilon}$} Let $v^{R,\epsilon}_k\coloneqq \psi_0(x)(a^{R,\epsilon}_{kl}(x)-a^\epsilon_{kl}(x))\frac{\partial u^\epsilon}{\partial x_l}(x)$, then by the definition of the Bloch transform~\eqref{BlochTransform2epsilon} for elements of $H^{-1}(\mathbb{R}^d)$, we have
\begin{align}\label{leps2}
\mathcal{B}^{R,\epsilon}_1l^{R,\epsilon}(\xi)=-i\xi_k\int_{\mathbb{R}^d}e^{-ix\cdot\xi}v^{R,\epsilon}_k(x)\overline{\phi^R_1\left(\frac{x}{\epsilon};\epsilon\xi\right)}\,dx\nonumber\\+\int_{\mathbb{R}^d}e^{-ix\cdot\xi}v^{R,\epsilon}_k(x)\frac{\partial\overline{\phi^{R}_1}}{\partial x_k}\left(\frac{x}{\epsilon};\epsilon\xi\right)\,dx.
\end{align}

The sequence $v^{R,\epsilon}_k$ is bounded in $L^2(\mathbb{R}^d)$, hence converges weakly to a limit $v^R_k\in L^2(\mathbb{R}^d)$. The first term on the RHS of~\eqref{leps2} is the Bloch transform of $-i\xi_kv^{R,\epsilon}_k$, hence by Lemma~\ref{Convergence_Bloch}, it converges to $-i\xi_k(v^R_k(x))^{\widehat{}}(\xi)$.

Using equation~\eqref{derivativeofphi}, the second term on RHS of~\eqref{leps2} can be written as 
\begin{align}
\int_{\mathbb{R}^d}e^{-ix\cdot\xi}v^{R,\epsilon}_k(x)\left[\xi_s\frac{\partial}{\partial x_k}\frac{\partial \phi^R_1}{\partial \eta_s}\left(\frac{x}{\epsilon};0\right)+\epsilon^{-1}\frac{\partial \gamma^R}{\partial y_k}\left(\frac{x}{\epsilon};\epsilon\xi\right)\right]\,dx.
\end{align}

The second term in the above expression goes to $0$ in view of~\eqref{gammaepsilon2}. The sequence $z^{R,\epsilon}_s(x)\coloneqq v_k^{R,\epsilon}(x)\frac{\partial}{\partial x_k}\frac{\partial \phi^R_1}{\partial \eta_s}\left(\frac{x}{\epsilon};0\right)$ is bounded in $L^2(\mathbb{R}^d)$. Therefore, it has a weakly convergent subsequence whose limit we shall call $z^R_s$. The second term on RHS of~\eqref{leps2} converges to the Fourier transform of $\xi_sz^R_s$. 
\begin{align}\label{convergence_leps}
\lim_{\epsilon\to 0}\mathcal{B}_1^{R,\epsilon}l^{R,\epsilon}\to-i\xi_k(v^R_k(x))^{\widehat{}}(\xi)+\xi_s(z^R_s)^{\widehat{}}.
\end{align}

Finally, passing to the limit in~\eqref{Bloch} as $\epsilon\to 0$ by applying equations~\eqref{convergence_ueps},~\eqref{convergence_geps},~\eqref{convergence_heps} and~\eqref{convergence_leps} we get:
\begin{align}\label{Bloch2}
\frac{1}{2}\frac{\partial^2\lambda^R_1}{\partial\eta_s\partial\eta_t}(0)\xi_s\xi_t\widehat{\psi_o u^*}(\xi)=&(\psi_0 f)^{\widehat{}}(\xi)-\left(\frac{\partial \psi_0}{\partial x_k}(x)\sigma^*_k(x)\right)^{\bf\widehat{}}(\xi)-i\xi_s a_{kl}^{R,*} \left(\frac{\partial \psi_0}{\partial x_l}(x)u^*(x)\right)^{\bf\widehat{}}(\xi)\nonumber\\
&-i\xi_k(v^R_k(x))^{\widehat{}}(\xi)+\xi_s(z^R_s)^{\widehat{}}.
\end{align}

\subsection{Limit $R\to \infty$}
 In the equation~\eqref{Bloch2} above, we pass to the limit $R\to\infty$ as follows:
 
 Firstly, observe that
 $$||v_k^R||_{L^2(\mathbb{R}^d)}\leq\liminf_{\epsilon\to 0}||v_k^{R,\epsilon}||_{L^2(\mathbb{R}^d)}\leq C\max_{l}||a^{R,\epsilon}_{kl}-a^\epsilon_{kl}||_{L^\infty(K)}=C\max_{l}||a^{R}_{kl}-a_{kl}||_{L^\infty(\epsilon K)},$$ due to the weak lower semicontinuity of norm. Hence, $v_k^R\to 0$ as $R\to\infty$. 
 
 Secondly, $$||z_s^R||_{L^2(\mathbb{R}^d)}\leq\liminf_{\epsilon\to 0}||z_s^{R,\epsilon}||_{L^2(\mathbb{R}^d)}\leq C\max_{k,l}||a^{R,\epsilon}_{kl}-a^\epsilon_{kl}||_{L^\infty(K)}=C\max_{k,l}||a^{R}_{kl}-a_{kl}||_{L^\infty(\epsilon K)},$$ due to the weak lower semicontinuity of norm. Hence, $z_s^R\to 0$ as $R\to\infty$.
 
 As a consequence we obtain the following limit equation in the Fourier space:
\begin{align}\label{Bloch3}
a_{kl}^*\xi_k\xi_l\widehat{\psi_o u^*}(\xi)=\widehat{\psi_0 f}-\left(\frac{\partial \psi_0}{\partial x_k}(x)\sigma^*_k(x)\right)^{\bf\widehat{}}(\xi)-i\xi_ka^*_{kl}\left(\frac{\partial \psi_0}{\partial x_l}(x)u^*(x)\right)^{\bf\widehat{}}(\xi).
\end{align}

\subsection{Proof of the homogenization result}
Taking the inverse Fourier transform in the equation~\eqref{Bloch3} above, we obtain the following:
\begin{align}\label{eq1}
(\mathcal{A}^{hom}(\psi_0 u^*)(x))=\psi_0 f-\frac{\partial \psi_0}{\partial x_k}(x)\sigma^*_k(x)-a^*_{kl}\frac{\partial}{\partial x_k}\left(\frac{\partial \psi_0}{\partial x_l}(x)u^*(x)\right),
\end{align}
where the operator $\mathcal{A}^{hom}$ is defined in~\eqref{homoperator}. At the same time, calculating using Leibniz rule, we have:
\begin{align}\label{eq2}
(\mathcal{A}^{hom}(\psi_0 u^*)(x))=(\psi_0(x)\mathcal{A}^{hom}u^*(x))-a^*_{kl}\frac{\partial}{\partial x_k}\left(\frac{\partial \psi_0}{\partial x_l}(x)u^*(x)\right)-a_{kl}^*\frac{\partial\psi_0}{\partial x_k}(x)\frac{\partial u^*}{\partial x_l}(x)
\end{align}
Using equations~\eqref{eq1} and~\eqref{eq2}, we obtain
\begin{align}
\psi_0(x)\left(\mathcal{A}^{hom}u^*-f\right)(x)=\frac{\partial\psi_0}{\partial x_k}\left[a_{kl}^*\frac{\partial u^*}{\partial x_l}(x)-\sigma_k^*(x)\right].\end{align}
Let $\omega$ be a unit vector in $\mathbb{R}^d$, then $\psi_0(x)e^{ix\cdot\omega}\in\mathcal{D}(\Omega)$. On substituting in the above equation, we get, for all $k=1,2,\ldots,d$ and for all $\psi_0\in\mathcal{D}(\Omega)$,
\begin{align}
\psi_0(x)\left[a_{kl}^*\frac{\partial u^*}{\partial x_l}(x)-\sigma_k^*(x)\right]=0.
\end{align}
Let $x_0$ be an arbitrary point in $\Omega$ and let $\psi_0(x)$ be equal to $1$ near $x_0$, then for a small neighborhood of $x_0$:
\begin{align}
\mbox{ for } k=1,2,\ldots,d,~\left[a_{kl}^*\frac{\partial u^*}{\partial x_l}(x)-\sigma_k^*(x)\right]=0
\end{align}
However, $x_0\in\Omega$ is arbitrary, so that
\begin{align}
\mathcal{A}^{hom}u^*=f\mbox{ and }\sigma^*_k(x)=a_{kl}^*\frac{\partial u^*}{\partial x_l}(x).
\end{align}Thus,we have obtained the limit equation in the physical space. This finishes the proof of Theorem~\ref{homog}.

\section{Identification of the Homogenized Tensor}\label{section6}

In this section, we recall that $a_{kl}^*$ can be identified with the homogenized tensor for the almost periodic operator $\mathcal{A}^\epsilon$~\cite{Kozlov78,Oleinik1982,Jikov1994} and that $a_{kl}^*$ does not depend on any subsequence of $a^{R,*}_{kl}$. The study of homogenization of almost periodic media was initiated by Kozlov~\cite{Kozlov78} who also obtained a convergence rate for a subclass of quasiperiodic media. Subsequently, an abstract approach which seeks solutions without derivatives was described in~\cite{Oleinik1982,Jikov1994} and is explained in the next subsection.
\subsection{Cell Problem for Almost Periodic Media}\label{APhom}
We begin by introducing the cell problem for almost periodic operator $\mathcal{A}$. Consider the set $S=\{\nabla\phi:\phi\in \Trig(\mathbb{R}^d;\mathbb{R})\}$ as a subset of $(B^2(\mathbb{R}^d))^d$, the Hilbert space of all $d$-tuples of $B^2(\mathbb{R}^d)$ functions. Let $W$ denote the closure of $S$ in $(B^2(\mathbb{R}^d))^d$. Let ${U}=(u_1,u_2,\ldots,u_d)\in W$ and ${V}=(v_1,v_2,\ldots,v_d)\in W$. On $W$, define the bilinear form 
\begin{align}
{\bf a}({U},{V})\coloneqq\mathcal{M}(A{U}\cdot{V}).
\end{align}

Then clearly the bilinear form $\bf a$ is continuous and coercive on $W$. Let $\xi\in\mathbb{R}^d$. Define a linear form on $W$ by
\begin{align}
{\bf l}_\xi({V})\coloneqq -\mathcal{M}(A\xi\cdot{V}),
\end{align} for $V\in W$.
The linear form ${\bf l}_\xi$ is continuous on $W$. As a consequence, by Lax-Milgram lemma, the problem 
\begin{align}\label{cellAP}
{\bf a}(N^\xi,V)={\bf l}_\xi(V),\,\forall V\in W
\end{align} has a solution $N^\xi\in W$ and by the classical theory of almost periodic homogenization~\cite{Oleinik1982}, the homogenized coefficients for $\mathcal{A}^\epsilon$ are given by \begin{align}\label{homoAP}
q_{kl}^*=\mathcal{M}\left(e_k\cdot Ae_l+e_k\cdot AN^{e_l}\right),
\end{align} where $e_i$ denotes the unit vector in $\mathbb{R}^d$ with $1$ in the $i^{th}$ place and $0$ elsewhere.

Since periodic media are also almost periodic, a question arises as to whether the formulation~\eqref{cellAP} is consistent with~\eqref{CellP}.

We restate the two cell problems here in their variational formulations:

The corrector $w^{R,\xi}$ satisfies
\begin{align}\label{Cell1}
\mathcal{M}_{Y_R}\left(A^R\nabla w^{R,\xi}\cdot\nabla\phi\right)=-\mathcal{M}_{Y_R}\left(A^R\xi\cdot\nabla \phi\right)
\end{align} for all $\phi\in H^1_\sharp(Y_R)$ whereas $N^{R,\xi}$ satisfies
\begin{align}\label{Cell2}
\mathcal{M}\left(A^RN^{R,\xi}\cdot V\right)=-\mathcal{M}(A^R\xi\cdot V),
\end{align} for all $V\in W$.

\begin{lemma}\label{APisP}
	Let $w^{R,\xi}$ and $N^{R,\xi}$ satisfy~\eqref{Cell1} and~\eqref{Cell2} respectively, then it holds that $$\displaystyle N^{R,\xi}=\nabla w^{R,\xi}.$$
\end{lemma}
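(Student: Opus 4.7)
The plan is to prove that $\nabla w^{R,\xi}\in W$ and that it satisfies the variational problem~\eqref{Cell2}; Lax--Milgram uniqueness then forces $N^{R,\xi}=\nabla w^{R,\xi}$. The crucial structural fact is that both $A^R$ and $\nabla w^{R,\xi}$ are $Y_R$-periodic, so that when paired via the Besicovitch mean $\mathcal{M}$ against an arbitrary almost periodic gradient, only the component of the test function with frequencies in $\frac{1}{R}\mathbb{Z}^d$ survives; this reduces the identity~\eqref{Cell2} to the periodic cell problem~\eqref{Cell1}.

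For the membership $\nabla w^{R,\xi}\in W$, I would truncate the Fourier series of $w^{R,\xi}\in H^1_\sharp(Y_R)$ (whose frequencies lie in $\frac{1}{R}\mathbb{Z}^d$) to obtain real trigonometric polynomials $\phi_n\in\Trig(\mathbb{R}^d;\mathbb{R})$ with $\nabla\phi_n\to\nabla w^{R,\xi}$ in $(L^2_\sharp(Y_R))^d$. Since $|\nabla\phi_n-\nabla w^{R,\xi}|^2$ is $Y_R$-periodic, its Besicovitch mean equals its average over $Y_R$, so the convergence holds in $(B^2(\mathbb{R}^d))^d$ as well. Each $\nabla\phi_n$ lies in $S$, so the closure $W$ contains $\nabla w^{R,\xi}$.

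For the variational identity, fix a test function $V=\nabla\phi$ with $\phi=\sum_{j} a_j e^{iy\cdot\eta_j}\in\Trig(\mathbb{R}^d;\mathbb{R})$ and split $\phi=\phi_R+\phi_R^\perp$ according to whether $\eta_j\in\frac{1}{R}\mathbb{Z}^d$; the symmetry of the frequency set of a real trigonometric polynomial under $\eta_j\mapsto-\eta_j$ ensures both summands are real, and $\phi_R\in H^1_\sharp(Y_R)$. For any $Y_R$-periodic $F\in (L^2_\sharp(Y_R))^d$ the Fourier expansion gives $\mathcal{M}(F\cdot\eta\, e^{iy\cdot\eta})=0$ whenever $\eta\notin\frac{1}{R}\mathbb{Z}^d$, while for $Y_R$-periodic integrands $\mathcal{M}$ coincides with $\mathcal{M}_{Y_R}$. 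Applied to $F=A^R\nabla w^{R,\xi}$ and $F=A^R\xi$, this yields
\[
\mathcal{M}(A^R\nabla w^{R,\xi}\cdot\nabla\phi)=\mathcal{M}_{Y_R}(A^R\nabla w^{R,\xi}\cdot\nabla\phi_R),\qquad \mathcal{M}(A^R\xi\cdot\nabla\phi)=\mathcal{M}_{Y_R}(A^R\xi\cdot\nabla\phi_R),
\]
and testing~\eqref{Cell1} against $\phi_R\in H^1_\sharp(Y_R)$ gives~\eqref{Cell2} for this $V$. Continuity of both sides on $(B^2(\mathbb{R}^d))^d$ (via boundedness of $A^R$ and Cauchy--Schwarz) extends the identity from $V\in S$ to all $V\in W$ by density, and Lax--Milgram uniqueness completes the proof.

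The main obstacle is the frequency-separation step: one must check carefully that pairing a $Y_R$-periodic $L^2$ function against almost periodic exponentials via $\mathcal{M}$ annihilates all frequencies outside $\frac{1}{R}\mathbb{Z}^d$, and that the splitting $\phi=\phi_R+\phi_R^\perp$ delivers a legitimate real-valued $H^1_\sharp(Y_R)$ test function for~\eqref{Cell1}. Once that bookkeeping is in place, the remainder is a routine density and Lax--Milgram argument.
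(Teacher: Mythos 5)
Your proof is correct and follows essentially the same route as the paper: reduce to exponential test functions (your frequency splitting of a real trigonometric polynomial is the same reduction), note that pairing a $Y_R$-periodic integrand with $e^{iy\cdot\eta}$ under $\mathcal{M}$ vanishes unless $\eta$ lies in the dual lattice while for lattice frequencies $\mathcal{M}=\mathcal{M}_{Y_R}$ reduces \eqref{Cell2} to \eqref{Cell1}, and conclude by Lax--Milgram uniqueness. Your explicit verification that $\nabla w^{R,\xi}\in W$ and the density/continuity step are careful additions of details the paper leaves implicit, not a different argument.
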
  

\begin{proof} We will show that $\nabla w^{R,\xi}$ solves the variational formulation~\eqref{Cell2}. To see this, it is enough to use test functions $V\in S$. Further, due to linearity, it is enough to use test functions of the form $V=\nabla(e^{iy\cdot\eta})$. Now, observe that if $\eta\in 2\pi R\mathbb{Z}^d$, then $\nabla w^{R,\xi}$ satisfies~\eqref{Cell2} since it reduces to equation~\eqref{Cell1} due to the equality $\mathcal{M}(f)=\mathcal{M}_{Y_R}(f)$ for $Y_R$-periodic functions $f$. On the other hand, if $\eta\not\in 2\pi R\mathbb{Z}^d$, once again $\nabla w^{R,\xi}$ satisfies~\eqref{Cell2}, both sides of which are identically zero, because $\mathcal{M}(f(\cdot)e^{iy\cdot\eta})=0$ whenever $\eta$ is not among the frequencies of $f$. Hence, in either case, $\nabla w^{R,\xi}$ satisfies equation~\eqref{Cell2}. Finally, due to uniqueness, \begin{align*}N^{R,\xi}=\nabla w^{R,\xi}.\end{align*}\end{proof}


Given an almost periodic function $f$, let $\Lambda(f)$ denote the set of all $\xi\in\mathbb{R}^d$ such that $\mathcal{M}(fe^{-ix\cdot\xi})\neq 0$.  Let $Mod(f)$ be the $\mathbb{Z}$-module generated by $\Lambda(f)$. The $\mathbb{Z}$-module $Mod(f)$ shall be referred to as the frequency module of $f$. In the argument above, we have shown that $Mod(N^{R,\xi})\subseteq Mod(A^R)$. This argument can be readily generalized to a module containment theorem for the correctors. In particular, we may prove that $Mod(N^\xi)\subseteq Mod(A)$. To paraphrase, the frequencies of the correctors are generated from the frequencies of the coefficients. To this end, we define a closed subspace of the Hilbert space $B^2(\mathbb{R}^d)$ in the following manner. Consider the set of all real trigonometric polynomials whose exponents come from $Mod(A)$ and call it $\Trig_A(\mathbb{R}^d;\mathbb{R})$. The closure of $\Trig_A(\mathbb{R}^d;\mathbb{R})$ in $B^2(\mathbb{R}^d)$ will be denoted by $B^2_A(\mathbb{R}^d)$. Consider the set $S_A=\{\nabla\phi:\phi\in \Trig_A(\mathbb{R}^d)\}$ as a subset of $(B^2_A(\mathbb{R}^d))^d$, the Hilbert space of all $d$-tuples of $B^2_A(\mathbb{R}^d)$ functions. Let $W_A$ denote the closure of $S_A$ in $(B^2(\mathbb{R}^d))^d$. To begin with, we prove that the frequencies of a given function $u\in B^2_A(\mathbb{R}^d)$ belong to $Mod(A)$.

\begin{lemma}
	Let $u\in B^2_A(\mathbb{R}^d)$. Let $\xi\in\mathbb{R}^d$ such that $\mathcal{M}(u\cdot e^{ix\cdot\xi})\neq 0$, then $\xi\in Mod(A)$.
\end{lemma}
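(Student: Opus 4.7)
The plan is a straightforward density argument: first show that the statement holds for $P\in\Trig_A(\mathbb{R}^d;\mathbb{R})$, and then pass to the $B^2$-limit using Cauchy--Schwarz in $B^2(\mathbb{R}^d)$.

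\textbf{Step 1 (trigonometric polynomials).} Let $P\in\Trig_A(\mathbb{R}^d;\mathbb{R})$, written as $P(y)=\sum_{j=1}^{N}a_j e^{iy\cdot\eta_j}$ with each $\eta_j\in Mod(A)$. Then
\[
\mathcal{M}(P\,e^{ix\cdot\xi})=\sum_{j=1}^N a_j\,\mathcal{M}(e^{ix\cdot(\eta_j+\xi)}),
\]
and the elementary identity $\mathcal{M}(e^{ix\cdot\zeta})=0$ for $\zeta\neq 0$, equal to $1$ otherwise, forces $\mathcal{M}(P\,e^{ix\cdot\xi})$ to vanish unless $\xi=-\eta_j$ for some $j$. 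Since $Mod(A)$ is a $\mathbb{Z}$-module it is symmetric about the origin, so $-\eta_j\in Mod(A)$. Hence $\mathcal{M}(P\,e^{ix\cdot\xi})\neq 0$ already implies $\xi\in Mod(A)$ for $P\in\Trig_A(\mathbb{R}^d;\mathbb{R})$.

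\textbf{Step 2 (density).} Since $u\in B^2_A(\mathbb{R}^d)$ is the $B^2$-closure of $\Trig_A(\mathbb{R}^d;\mathbb{R})$, there exists a sequence $P_n\in\Trig_A(\mathbb{R}^d;\mathbb{R})$ with $\mathcal{M}(|u-P_n|^2)\to 0$. The function $x\mapsto e^{ix\cdot\xi}$ has $|e^{ix\cdot\xi}|\equiv 1$, so $\mathcal{M}(|e^{ix\cdot\xi}|^2)=1$ and the Cauchy--Schwarz inequality in $B^2(\mathbb{R}^d)$ yields
\[
\bigl|\mathcal{M}((u-P_n)\,e^{ix\cdot\xi})\bigr|\;\leq\;\bigl(\mathcal{M}(|u-P_n|^2)\bigr)^{1/2}\longrightarrow 0.
\]
Consequently $\mathcal{M}(P_n\,e^{ix\cdot\xi})\to \mathcal{M}(u\,e^{ix\cdot\xi})$.

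\textbf{Step 3 (conclusion).} Argue by contrapositive: if $\xi\notin Mod(A)$, then by Step 1 every $P_n$ satisfies $\mathcal{M}(P_n\,e^{ix\cdot\xi})=0$, whence by Step 2 $\mathcal{M}(u\,e^{ix\cdot\xi})=0$. Therefore $\mathcal{M}(u\,e^{ix\cdot\xi})\neq 0$ forces $\xi\in Mod(A)$, which is the claim.

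There is no real obstacle here; the only thing to be careful about is reconciling the sign convention $e^{+ix\cdot\xi}$ used in the statement with the usual definition of the Bohr--Fourier coefficient $\mathcal{M}(ue^{-ix\cdot\xi})$, but this is harmless because $Mod(A)$ is symmetric under $\xi\mapsto -\xi$. The argument also makes clear why the lemma will extend, in the sequel, to a module containment $Mod(N^\xi)\subseteq Mod(A)$ for the corrector: one only needs to know that $N^\xi\in W_A$, i.e.\ that the corrector lies in the $B^2$-closed subspace generated by gradients of elements of $\Trig_A(\mathbb{R}^d;\mathbb{R})$.
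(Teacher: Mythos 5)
Your proof is correct and follows essentially the same route as the paper's: a density argument with the contrapositive, using Cauchy--Schwarz together with $|e^{ix\cdot\xi}|\equiv 1$ to pass the vanishing of $\mathcal{M}(P_n\,e^{ix\cdot\xi})$ for $\xi\notin Mod(A)$ to the limit $u$. The only difference is that you spell out explicitly (via orthogonality of exponentials and symmetry of the $\mathbb{Z}$-module) why the mean vanishes for trigonometric polynomials in $\Trig_A(\mathbb{R}^d;\mathbb{R})$, a step the paper leaves implicit.
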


\begin{proof}
	Since $u\in B^2_A(\mathbb{R}^d)$, we have a sequence of trigonometric polynomials $u_n\in Trig_A(\mathbb{R}^d)$ such that $\mathcal{M}(|u_n-u|^2)\to 0$. Let $\xi\notin Mod(A)$, then
	\begin{align*}
		|\mathcal{M}(u\cdot e^{ix\cdot\xi})|&\leq |\mathcal{M}(u_n\cdot e^{ix\cdot\xi})|+|\mathcal{M}((u_n-u)\cdot e^{ix\cdot\xi})|\\
		&=|\mathcal{M}((u_n-u)\cdot e^{ix\cdot\xi})|\\
		&\leq \left(\mathcal{M}(|u_n-u|^2)\right)^{1/2},
	\end{align*} which can be made arbitrarily small. Therefore, $\mathcal{M}(u\cdot e^{ix\cdot\xi})=0$.
\end{proof}

Now the equation $$-\dive(A(\xi+ N))=0\mbox{ in }\mathbb{R}^d$$ has two variational formulations as below:

Find $N^\xi_A\in W_A$ such that
\begin{align}\label{Cell3}
\mathcal{M}\left(AN_A^{\xi}\cdot V\right)=-\mathcal{M}(A\xi\cdot V),
\end{align} for all $V\in W_A$ and find $N^\xi\in W$ such that
\begin{align}\label{Cell4}
\mathcal{M}\left(AN^{\xi}\cdot V\right)=-\mathcal{M}(A\xi\cdot V),
\end{align} for all $V\in W$.

\begin{lemma}\label{modulecontainment}
	Let $N_A^{\xi}$ and $N^{\xi}$ satisfy~\eqref{Cell3} and~\eqref{Cell4} respectively, then it holds that $$\displaystyle N^{\xi}=N_A^{\xi}.$$ In particular, $N^\xi\in (B^2_A(\mathbb{R}^d))^d$ and hence $Mod(N^{\xi})\subseteq Mod(A)$.
\end{lemma}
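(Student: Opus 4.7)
The plan is to mimic the proof of Lemma~\ref{APisP}, exploiting the inclusion $W_A \subseteq W$. Since $N^\xi$ is uniquely determined by~\eqref{Cell4} (Lax--Milgram), it suffices to show that $N_A^\xi$, viewed as an element of $W$, also solves~\eqref{Cell4}. By density of $S$ in $W$ and linearity of the bilinear form, we only need to verify the identity for the complex test functions $V = \nabla(e^{iy\cdot\eta}) = i\eta\, e^{iy\cdot\eta}$ as $\eta$ ranges over $\mathbb{R}^d$, and then split into the two cases $\eta\in Mod(A)$ and $\eta\notin Mod(A)$.

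In the first case $\eta\in Mod(A)$, the test function lies in $S_A \subseteq W_A$, so the equation holds directly by the definition~\eqref{Cell3} of $N_A^\xi$. The substantive case is $\eta\notin Mod(A)$, where I claim both sides vanish. For the right-hand side, $-\mathcal{M}(A\xi\cdot V) = -i\sum_{k,l}\xi_l\eta_k\,\mathcal{M}(a_{kl}\,e^{iy\cdot\eta})$; since the frequencies of each $a_{kl}$ lie in $Mod(A)$ and $Mod(A)$ is symmetric under negation, $\mathcal{M}(a_{kl}\,e^{iy\cdot\eta})=0$. For the left-hand side, I use the preceding lemma to conclude that each component $(N_A^\xi)_l\in B^2_A(\mathbb{R}^d)$ has frequency set in $Mod(A)$; then I approximate $(N_A^\xi)_l$ in $B^2$ by polynomials $P_n\in \Trig_A(\mathbb{R}^d)$ and approximate each $a_{kl}$ (which is almost periodic with frequencies in $Mod(A)$) by polynomials in $\Trig_A$. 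Each product $a_{kl}P_n e^{iy\cdot\eta}$ is a finite trigonometric sum with exponents of the form $\mu+\nu+\eta$ with $\mu,\nu\in Mod(A)$; since $\eta\notin Mod(A)$, no exponent vanishes and the mean value is zero. Passing to the limit with Cauchy--Schwarz in $B^2(\mathbb{R}^d)$ (using that $a_{kl}e^{iy\cdot\eta}$ is uniformly bounded) yields $\mathcal{M}(a_{kl}(N_A^\xi)_l\,e^{iy\cdot\eta})=0$, and summing in $l$ then multiplying by $i\eta_k$ shows the left-hand side vanishes as well.

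Having established that $N_A^\xi$ satisfies~\eqref{Cell4}, the uniqueness of the solution in $W$ gives $N^\xi=N_A^\xi$, hence $N^\xi\in(B^2_A(\mathbb{R}^d))^d$. Applying the preceding lemma componentwise then immediately yields $Mod(N^\xi)\subseteq Mod(A)$, which completes the argument.

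The main subtle point I anticipate is the frequency-tracking step for the left-hand side: one must be careful that multiplication by a bounded almost periodic function preserves the $B^2$-closure $B^2_A(\mathbb{R}^d)$ in the appropriate sense, so that the limit interchange is justified. Everything else is a routine adaptation of the periodic argument in Lemma~\ref{APisP}, and in fact the identity $\mathcal{M}(f)=\mathcal{M}_{Y_R}(f)$ used there is now replaced by the cleaner fact that the mean value annihilates any exponential whose frequency is not in the relevant module.
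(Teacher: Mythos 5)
Your proposal is correct and takes essentially the same route as the paper's proof: reduce by density and linearity to test functions $V=\nabla(e^{iy\cdot\eta})$, split into the cases $\eta\in Mod(A)$ and $\eta\notin Mod(A)$, show both sides of~\eqref{Cell4} vanish in the latter case, and conclude by uniqueness from Lax--Milgram. The only difference is that you make explicit, via trigonometric approximation and Cauchy--Schwarz, why $\mathcal{M}\bigl(a_{kl}(N_A^\xi)_l\,e^{iy\cdot\eta}\bigr)=0$ when $\eta\notin Mod(A)$, a point the paper states without detail.
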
  

\begin{proof} We will show that $N_A^{\xi}$ solves the variational formulation~\eqref{Cell4}. To see this, it is enough to use test functions $V\in S$. Further, due to linearity, it is enough to use test functions of the form $V=\nabla(e^{iy\cdot\eta})$. Now, observe that if $\eta\in Mod(A)$, then $N_A^{\xi}$ satisfies~\eqref{Cell4} since it is the same as equation~\eqref{Cell3}. On the other hand, if $\eta\not\in Mod(A)$, once again $N_A^{\xi}$ satisfies~\eqref{Cell4}, both sides of which are identically zero, because $\mathcal{M}(f(\cdot)e^{iy\cdot\eta})=0$ whenever $\eta$ is not among the frequencies of $f$. Hence, in either case, $N_A^{\xi}$ satisfies equation~\eqref{Cell4}. Finally, due to uniqueness, \begin{align*}N^{\xi}=N_A^{\xi}.\end{align*}
\end{proof}

\begin{remark}
	By Lemma~\ref{modulecontainment}, we can conclude that if $A$ is periodic then $N^\xi$ is also periodic. Thus, it is possible to conclude Lemma~\ref{APisP} from Lemma~\ref{modulecontainment}. We would also like to point out that Lemma~\ref{modulecontainment} is a qualitative version of Theorem~\ref{ShenContainment} where the almost periodicity of $\nabla w^\xi$ is expressed in terms of almost periodicity of $A$. Module containment results pertaining to a variety of differential equations may be found in~\cite{Fink74,Amerio1971}.
\end{remark}

\subsection{Convergence of Homogenized Tensors}

It was proved by Bourgeat and Piatnitski~\cite[Theorem~1]{BourgeatPiatnitski2004} that approximate homogenized tensors defined in~\eqref{homoR} using periodic correctors defined in~\eqref{CellP} converge to the homogenized tensor~\eqref{homoAP} of almost periodic media. They rely on homogenization theorem for almost periodic operators~\cite[p.~241]{Jikov1994} and an auxilliary result on convergence of ``arbitrary solutions"~\cite[Theorem~5.2]{Jikov1994}. We restate this theorem here without proof for which we refer to~\cite{BourgeatPiatnitski2004}.

\begin{theorem}(Bourgeat \& Piatnistski~\cite[Theorem~1]{BourgeatPiatnitski2004})
 Let $1\leq k,l\leq d$ and let $a^{R,*}_{kl}$ and $q^*_{kl}$ be defined as in~\eqref{homoR} and~\eqref{homoAP} respectively, then $a^{R,*}_{kl}\to q^*_{kl}$ as $R\to\infty$.
\end{theorem}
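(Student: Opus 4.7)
The plan is to extract subsequential limits of $(a^{R,*})$ and identify them through a compactness argument that couples the almost periodic and periodic homogenization theorems on a common bounded domain. Fix a smooth bounded domain $\Omega\subset\mathbb{R}^d$ and $f\in L^2(\Omega)$, and consider the Dirichlet problems $\mathcal{A}^{R,\epsilon}u^{R,\epsilon}=f$ with $u^{R,\epsilon}\in H_0^1(\Omega)$ and $\mathcal{A}^\epsilon u^\epsilon=f$ with $u^\epsilon\in H_0^1(\Omega)$. Classical periodic homogenization yields $u^{R,\epsilon}\rightharpoonup u^R$ in $H_0^1(\Omega)$ as $\epsilon\to 0$ for each fixed $R$, with $u^R$ solving the homogenized equation for $a^{R,*}$; the almost periodic homogenization theorem of Kozlov and Oleinik--Jikov (recalled in Subsection~\ref{APhom}) yields $u^\epsilon\rightharpoonup u^*$ with $u^*$ solving the homogenized equation for $q^*$.

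By the uniform boundedness of $(a^{R,*})$ established in Subsection~\ref{boundednessoftensors}, every subsequence has a further subsequence $R_n\to\infty$ along which $a^{R_n,*}\to L$ for some symmetric matrix $L$. Continuity of the Dirichlet problem in the coefficient matrix then yields $u^{R_n}\rightharpoonup u_L$, where $u_L$ solves $-\mathrm{div}(L\nabla u_L)=f$. The goal reduces to proving $u_L=u^*$ for every $f\in L^2(\Omega)$, since this identifies $L$ uniquely with $q^*$ and, by uniqueness of the subsequential limit, forces $a^{R,*}\to q^*$ along the full sequence.

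The identification rests on the elementary observation that $a^{R,\epsilon}_{kl}(x)=a^\epsilon_{kl}(x)$ for all $x\in\Omega$ whenever $R\epsilon\pi$ exceeds the diameter of $\Omega$, since then $x/\epsilon\in Y_R$ where $A^R$ coincides with $A$. We therefore construct a diagonal sequence $\epsilon_n\to 0$ satisfying simultaneously $R_n\epsilon_n\to\infty$ (so that $\mathcal{A}^{R_n,\epsilon_n}=\mathcal{A}^{\epsilon_n}$ on $\Omega$, hence $u^{R_n,\epsilon_n}=u^{\epsilon_n}$ there) and $\|u^{R_n,\epsilon_n}-u^{R_n}\|_{H_0^1(\Omega)}\to 0$ (obtained from periodic homogenization for each fixed $R_n$). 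Along this diagonal, $u^{\epsilon_n}=u^{R_n,\epsilon_n}$ converges weakly both to $u_L$ (via the $u^{R_n}$) and to $u^*$ (by almost periodic homogenization), giving $u_L=u^*$ as required.

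The main obstacle lies in the joint feasibility of the two constraints in the diagonal construction: $\epsilon_n$ must be small enough for periodic homogenization at scale $R_n$ to be effective, yet large enough so that $R_n\epsilon_n\to\infty$. The qualitative rate $\epsilon(R_n)$ of periodic homogenization may a priori deteriorate with $R_n$, and passing to a sparser subsequence of $R_n$ so that $R_n\epsilon(R_n)\to\infty$ is where care is required. Bourgeat and Piatnitski sidestep this difficulty by invoking Jikov's convergence-of-arbitrary-solutions result (\cite[Theorem~5.2]{Jikov1994}), which directly handles such doubly indexed families by asserting weak convergence of fluxes along any joint limit $(R,\epsilon)\to(\infty,0)$, thereby bypassing any need to control the homogenization rate as a function of $R$.
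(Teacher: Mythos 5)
The paper does not prove this theorem at all: it is quoted from Bourgeat--Piatnitski, and the proof there proceeds by rescaling the cell problem on $Y_R$ to the fixed cell $Y_1$ (so the oscillation parameter is $\epsilon=1/R$, a \emph{single} parameter) and then invoking almost periodic homogenization together with Jikov's convergence-of-arbitrary-solutions result \cite[Theorem~5.2]{Jikov1994} to pass to the limit in the averaged flux defining $a^{R,*}_{kl}$; this is exactly the device the present paper reuses in the proof of Theorem~\ref{convergence4}. Your argument is a genuinely different, doubly-indexed diagonal scheme, and it has a real gap.

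The gap is the joint feasibility you yourself flag, and it is not a matter of ``care'': the two constraints are pulling in opposite directions. To have $A^{R_n,\epsilon_n}=A^{\epsilon_n}$ on $\Omega$ you need $\pi R_n\epsilon_n\gtrsim\operatorname{diam}\Omega$, i.e.\ the effective period $2\pi R_n\epsilon_n$ of the periodized medium is \emph{larger} than the domain; but $u^{R_n,\epsilon_n}$ being close to $u^{R_n}$ is a statement of homogenization at scale $R_n$, which holds only when that same effective period is \emph{small} compared with $\Omega$. Indeed, in the regime $\pi R\epsilon>\operatorname{diam}\Omega$ one has $u^{R,\epsilon}=u^{\epsilon}$ identically on $\Omega$, so demanding that it be close (even in the metrized weak topology) to $u^{R}$ is essentially demanding $u^*\approx u^{R}$, which is the conclusion you are trying to reach -- the argument becomes circular there. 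Passing to a sparser subsequence of $R_n$ cannot rescue this: the admissible threshold $\epsilon^*(R,\delta)$ below which periodic homogenization at level $R$ is $\delta$-accurate is a fixed quantity determined by $R$, and thinning the sequence does not enlarge it; one would need an $R$-uniform quantitative estimate guaranteeing $R\,\epsilon^*(R,\delta)\to\infty$, which is precisely the kind of input a qualitative proof does not have (and which the period-versus-domain heuristic suggests is false in general). There is also a secondary, fixable slip: periodic homogenization gives $u^{R,\epsilon}\rightharpoonup u^{R}$ weakly in $H^1_0(\Omega)$ (strongly only in $L^2$), not $\|u^{R,\epsilon}-u^{R}\|_{H^1_0(\Omega)}\to 0$; metrizing the weak topology on bounded sets repairs that step, but not the main incompatibility. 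Your closing appeal to \cite[Theorem~5.2]{Jikov1994} is indeed the correct tool, but using it amounts to adopting the Bourgeat--Piatnitski rescaling proof rather than patching the diagonal construction proposed here.
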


In Subsection~\ref{boundednessofcellfunctions}, we showed that the sequence of homogenized tensors $a_{kl}^{R,*}$ is bounded and hence converges for a subsequence to a limit $a_{kl}^*$. The theorem of Bourgeat and Piatnitski shows that, in fact, the whole sequence converges to the limit $q_{kl}^*$. Therefore, $a_{kl}^*=q_{kl}^*$.


\section{Higher modes do not contribute}\label{section7}
The proof of the qualitative homogenization theorem (Theorem~\ref{homog}) only requires the first Bloch transform. It is not clear whether the higher Bloch modes make any contribution to the homogenization limit. In this section, we show that they do not. We know that Bloch decomposition is the isomorphism $L^2(\mathbb{R}^d)\cong  L^2(Y^{'};\ell^2(\mathbb{N}))$ which is reflected in the inverse identity~\eqref{Blochinverse}. For simplicity, take $\Omega=\mathbb{R}^d$ and consider the equation $\mathcal{A}^\epsilon u^\epsilon=f$ in $\mathbb{R}^d$ which is equivalent to \begin{align*}\mathcal{B}^{R,\epsilon}_m \mathcal{A}^\epsilon u^\epsilon(\xi)=\mathcal{B}^{R,\epsilon}_mf(\xi)\quad\forall m\geq 1,\forall\,\xi\in\epsilon^{-1}Y_R^{'},\end{align*} which may be further expanded to
\begin{align*}
\mathcal{B}^{R,\epsilon}_m \mathcal{A}^{R,\epsilon}u^\epsilon(\xi)=\mathcal{B}^{R,\epsilon}_mf(\xi)+\left(\mathcal{B}^{R,\epsilon}_m\nabla\cdot(A^\epsilon-A^{R,\epsilon})\nabla u^\epsilon\right)(\xi)\quad\forall m\geq 1,\forall\,\xi\in\epsilon^{-1}Y_R^{'},
\end{align*} or \begin{align}\label{cascade}
\lambda^{R,\epsilon}_m(\xi)\mathcal{B}^{R,\epsilon}_m u^\epsilon(\xi)=\mathcal{B}^{R,\epsilon}_mf(\xi)+\left(\mathcal{B}^{R,\epsilon}_m\nabla\cdot(A^\epsilon-A^{R,\epsilon})\nabla u^\epsilon\right)(\xi)\quad\forall m\geq 1,\forall\,\xi\in\epsilon^{-1}Y_R^{'}.
\end{align}
We claim that one can neglect all the equations corresponding to $m\geq 2$.

\begin{proposition}
	Let $$v^{R,\epsilon}(x)=\int_{\epsilon^{-1}Y_R^{'}}\sum_{m=2}^{\infty}\mathcal{B}^{R,\epsilon}_m u^\epsilon(\xi)\phi_m^{R,\epsilon}(x;\xi)e^{ix\cdot\xi}\,d\xi,$$ then $||v^{R,\epsilon}||_{L^2(\mathbb{R}^d)}\leq cR\epsilon.$ Hence, given any sequence, $\epsilon_k\to 0$, we can find a sequence $R_{k}$ such that $v^{R_k,\epsilon_k}\to 0$ as $k \to \infty$.
\end{proposition}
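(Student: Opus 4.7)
The plan is to apply Parseval to rewrite $\|v^{R,\epsilon}\|_{L^2}^2$ as a sum of higher Bloch coefficients of $u^\epsilon$ and then insert the trivial factor $\lambda_m^{R,\epsilon}(\xi)/\lambda_m^{R,\epsilon}(\xi)$ inside the sum. The two ingredients will be (i) a spectral gap lower bound $\lambda_m^{R,\epsilon}(\xi)\gtrsim (R\epsilon)^{-2}$ valid for all $m\geq 2$ and $\xi\in\epsilon^{-1}Y_R^{'}$, and (ii) a uniform upper bound on $\sum_m\int\lambda_m^{R,\epsilon}|\mathcal{B}_m^{R,\epsilon}u^\epsilon|^2$ coming from the Bloch diagonalization of the quadratic form of $\mathcal{A}^{R,\epsilon}$ together with the standard $H^1$ energy estimate for $u^\epsilon$.

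First, by Parseval's identity~\eqref{unitary1epsilon},
\begin{align*}
\|v^{R,\epsilon}\|_{L^2(\mathbb{R}^d)}^2=\sum_{m=2}^\infty\int_{\epsilon^{-1}Y_R^{'}}|\mathcal{B}_m^{R,\epsilon}u^\epsilon(\xi)|^2\,d\xi.
\end{align*}
For step (i), the homothecy $\lambda_m^{R,\epsilon}(\xi)=\epsilon^{-2}\lambda_m^R(\epsilon\xi)$ reduces matters to proving $\lambda_m^R(\eta)\gtrsim R^{-2}$ for $m\geq 2$ and $\eta\in Y_R^{'}$. By coercivity~\eqref{coercivity}, the quadratic form of $\mathcal{A}^R(\eta)$ on $L^2_\sharp(Y_R)$ dominates $\alpha$ times that of the shifted Laplacian $-(\nabla+i\eta)^2$, whose eigenvalues are explicitly $\{|\eta+k/R|^2:k\in\mathbb{Z}^d\}$. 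For $\eta\in Y_R^{'}=[-1/(2R),1/(2R))^d$ and any $k\in\mathbb{Z}^d\setminus\{0\}$, there is at least one component with $|\eta_i+k_i/R|\geq 1/R-1/(2R)=1/(2R)$, so $|\eta+k/R|^2\geq 1/(4R^2)$. By the min-max principle this gives $\lambda_m^R(\eta)\geq \alpha/(4R^2)$ for $m\geq 2$, hence $\lambda_m^{R,\epsilon}(\xi)\geq \alpha/(4R^2\epsilon^2)$. For step (ii), the Bloch diagonalization of the quadratic form yields
\begin{align*}
\sum_{m=1}^\infty\int_{\epsilon^{-1}Y_R^{'}}\lambda_m^{R,\epsilon}(\xi)|\mathcal{B}_m^{R,\epsilon}u^\epsilon(\xi)|^2\,d\xi=\int_{\mathbb{R}^d}A^{R,\epsilon}\nabla u^\epsilon\cdot\nabla u^\epsilon\,dx\leq \|A\|_{L^\infty}\|\nabla u^\epsilon\|_{L^2(\mathbb{R}^d)}^2\leq C,
\end{align*}
where the last inequality is the standard energy estimate for $\mathcal{A}^\epsilon u^\epsilon=f$. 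Combining,
\begin{align*}
\|v^{R,\epsilon}\|_{L^2}^2=\sum_{m=2}^\infty\int_{\epsilon^{-1}Y_R^{'}}\frac{\lambda_m^{R,\epsilon}(\xi)|\mathcal{B}_m^{R,\epsilon}u^\epsilon(\xi)|^2}{\lambda_m^{R,\epsilon}(\xi)}\,d\xi\leq \frac{4R^2\epsilon^2}{\alpha}\cdot C,
\end{align*}
which yields $\|v^{R,\epsilon}\|_{L^2}\lesssim R\epsilon$.

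The main obstacle is the spectral gap in step (i): the lower bound must be uniform in both $R$ and $\eta\in Y_R^{'}$, and the reduction to the explicit Laplacian spectrum via coercivity is what makes this tractable. Everything else is a routine application of Parseval and the energy inequality. The concluding assertion about sequences is immediate: given $\epsilon_k\to 0$, choose $R_k=\epsilon_k^{-1/2}$ so that $R_k\to\infty$ while $R_k\epsilon_k=\sqrt{\epsilon_k}\to 0$, whence $v^{R_k,\epsilon_k}\to 0$ in $L^2(\mathbb{R}^d)$.
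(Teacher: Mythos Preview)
Your proof is correct and follows essentially the same approach as the paper: Parseval reduces $\|v^{R,\epsilon}\|_{L^2}^2$ to higher Bloch coefficients, the quadratic-form identity $\sum_m\int\lambda_m^{R,\epsilon}|\mathcal{B}_m^{R,\epsilon}u^\epsilon|^2=\int A^{R,\epsilon}\nabla u^\epsilon\cdot\nabla u^\epsilon$ together with the $H^1$ bound on $u^\epsilon$ controls the weighted sum, and a uniform spectral gap $\lambda_2^R(\eta)\gtrsim R^{-2}$ finishes. The only cosmetic difference is in justifying the gap: the paper compares with the second Neumann eigenvalue of the Laplacian on $Y_R$, whereas you compute the eigenvalues $|\eta+k/R|^2$ of the shifted periodic Laplacian directly and bound the second one from below---your route is slightly more explicit but yields the same $\alpha/(4R^2)$ via min-max and coercivity.
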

\begin{proof}
	Due to boundedness of the sequence $(u^\epsilon)$ in $H^1(\mathbb{R}^d)$, we have
	\begin{align}
	\int_{\mathbb{R}^d}\mathcal{A}^{R,\epsilon} u^\epsilon\,\overline{u^\epsilon}\leq C.
	\end{align}
	However, by Plancherel Theorem~\eqref{Plancherel}, we have
	\begin{align*}
	\int_{\mathbb{R}^d} \mathcal{A}^{R,\epsilon} u^\epsilon\, \overline{u^\epsilon}=\sum_{m=1}^{\infty}\int_{\epsilon^{-1}Y_R^{'}}\left(\mathcal{B}^{R,\epsilon}_m\mathcal{A}^{R,\epsilon} u^\epsilon\right)(\xi)\,\overline{\mathcal{B}^{R,\epsilon}_mu^\epsilon(\xi)}\,d\xi\leq C
	\end{align*}
	Using~\eqref{diagonalizationepsilon}, we have
	\begin{align*}
	\sum_{m=1}^{\infty}\int_{\epsilon^{-1}Y_R^{'}}\lambda^{R,\epsilon}_m(\xi)|{\mathcal{B}^{R,\epsilon}_mu^\epsilon(\xi)}|^2\,d\xi\leq C.
	\end{align*}
	Now, by a simple application of Courant-Fischer min-max principle, we can show that
	\begin{align}
	\lambda_m^R(\eta)\geq\lambda_2^R(\eta)\geq\lambda^R_2(-\Delta,R)\geq\frac{C}{R^2}>0\quad\forall\, m\geq 2\quad\forall\,\eta\in Y^{'}_R,
	\end{align} where $\lambda^R_2(-\Delta,R)$ is the second eigenvalue of Laplacian on $Y_R$ with Neumann boundary condition on $\partial Y_R$. The bound quoted is standard for the Neumann Laplacian on a rectangle but it may also be understood as an instance of the fundamental gap inequality for Neumann Laplacian on convex domains~\cite{Payne1960}. We also know that $\lambda^{R,\epsilon}_m(\xi)=\epsilon^{-2}\lambda^{R,\epsilon}_m$, therefore, combining these two facts, we obtain 
	\begin{align*}
	\sum_{m=2}^{\infty}\int_{\epsilon^{-1}Y_R^{'}}|{\mathcal{B}^{R,\epsilon}_mu^\epsilon(\xi)}|^2\,d\xi\leq CR^2\epsilon^2.
	\end{align*}
	Now, given any sequence $\epsilon_k\to 0$, we can choose a sequence $R_{k}\to\infty$ such that $R_{k}^2<\frac{1}{\epsilon}$, then along this sequence
	\begin{align*}
	\sum_{m=2}^{\infty}\int_{\epsilon^{-1}Y_R^{'}}|{\mathcal{B}^{R,\epsilon}_mu^\epsilon(\xi)}|^2\,d\xi\leq C\epsilon.
	\end{align*}
	By Parseval's identity, the left side is equal to $||v^{R,\epsilon}||^2_{L^2(\mathbb{R}^d)}$. This completes the proof of this Proposition.
\end{proof}

\begin{remark}
	The product $``R\epsilon"$ is the resonance error~\cite{Gloria2011},~\cite{GloriaHabibi2016} due to the approximation. The above discussion explains the relation between higher modes of the Bloch spectrum and the resonance error. In particular, the periodic approximation serves to separate the lowest Bloch mode from the rest of the spectrum and the limit $R\to\infty$ represents the loss of simplicity and hence analyticity of the lowest Bloch eigenvalue near zero.
\end{remark}

\section{Rate of convergence for approximations}\label{roc}

Let $a^*_{kl}$ denote the $(k,l)^{th}$ entry of the homogenized tensor for the almost periodic operator. We shall mostly write this as $e_k\cdot A^*e_l$. Similarly, the homogenized tensor associated to the periodization $A^R$ will be denoted by $A^{R,*}$. In Section~\ref{section6}, we observed that $A^{R,*}\to A^*$. In this section, we shall obtain a rate of convergence estimate for the error $|A^*-A^{R,*}|$.

\subsection{Volume Averaging Method}\label{subsection1} In engineering, the Volume averaging method~\cite{Whitaker1988}  is employed to determine effective behavior of heterogeneous media by using averages of physical quantities, such as energy, on a large volume of the domain under consideration, called a Representative Elementary Volume~\cite{whitaker2013method}. A comparison between the mathematical theory of homogenization and volume averaging is carried out in~\cite{davit2013homogenization}. In a well-known paper of Bourgeat and Piatnitski~\cite{BourgeatPiatnitski2004}, the volume averaging technique has been employed to obtain approximations to homogenized tensor for stochastic media. 

The homogenization of stochastic as well as almost periodic media has two major difficulties - the cell problem is posed on $\mathbb{R}^d$ and the loss of differential structure, i.e., the correctors do not appear as derivatives in the cell problem~\eqref{cellAP} in almost periodic and stochastic homogenization. The differential structure is important as it is responsible for the compensated compactness of the oscillating test functions in homogenization~\cite{cherkaev97}. As a compromise, many authors such as Kozlov~\cite{kozlov}, Yurinski~\cite{Yurinski1986} have introduced cell problems with a penalization (or regularization) term to recover the differential structure. However, these problems are still posed on $\mathbb{R}^d$. The homogenized tensor appears as a mean value on $\mathbb{R}^d$ which makes the computation of homogenized tensor impossible. Hence, volume averages on large cubes provide a suitable proxy for the homogenized tensor.

Like stochastic media, almost periodic media exhibit {\it long range order}. Stochastic media is quantified in terms of mixing coefficients. In contrast, the process generated by almost periodic media is not mixing, although it is ergodic~\cite{Simon1982}. In some sense, almost periodic functions fall half way between periodic and random media. Therefore, a quantification specific to almost periodicity is required in order to obtain quantitative results in homogenization theory. A modulus $\rho(A)$ of almost periodicity is defined in~\cite{Armstrong2014}, which has been employed by Shen~\cite{Shen2015} to extend the compactness methods in~\cite{AvellanedaLin1987} to almost periodic homogenization. Shen also proves that the small divisors condition of Kozlov~\cite{Kozlov78} implies a decay hypothesis on $\rho(A)$. Kozlov was the first to prove a rate of convergence estimate in homogenization of almost periodic media satisfying the small divisors condition. Thereafter, quantitative homogenization of almost periodic operators has seen a resurgence in the works of Armstrong, Shen and coauthors~\cite{Armstrong2014, Shen2015,ShenZhuge2018,ArmstrongShen2016,ArmstrongGloriaKuusi2016}. 

\subsection{Rate of convergence estimates}  In this subsection, we will estimate the error $|A^*-A^{R,*}|$ using the strategy of Bourgeat and Piatnitski~\cite{BourgeatPiatnitski2004}. Their techniques were refined and improved by Gloria and his coauthors~\cite{Gloria2011},~\cite{GloriaHabibi2016},~\cite{GloriaOtto2017}. We shall follow the ideas of these authors to establish convergence rate for $A^{R,*}$ in terms of the following quantification of almost periodicity as introduced in~\cite{Armstrong2014}. For a matrix $A$ with continuous and bounded entries, define the following modulus of almost periodicity:
\begin{align}\label{modulusAP}
\rho(A,L)\coloneqq \sup_{y\in\mathbb{R}^d}\inf_{|z|\leq L}||A(\cdot+y)-A(\cdot+z)||_{L^{\infty}(\mathbb{R}^d)}.
\end{align}
It follows that $A$ is almost periodic if and only if $\rho(A,L)\to 0$ as $L\to\infty$. In particular, for periodic functions, the modulus becomes zero for large $L$. We are now ready to state the theorem on the rate of convergence.

\begin{theorem}\label{THEOREM}If $A\in AP(\mathbb{R}^d)$ is such that, for each $L>0$, $\rho(A,L)$ satisfies $\rho(A,L)\lesssim 1/L^\tau$ for some $\tau>0$, then, there exists a $\beta\in(0,1)$ such that	\begin{align}
	|A^*-A^{R,*}|\lesssim \frac{1}{{R}^{\beta}},
	\end{align} where $A^*$ and $A^{R,*}$ are defined in~\eqref{homoAP} and~\eqref{homoR} respectively.\qed 
\end{theorem}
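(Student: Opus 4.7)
The plan is to follow the Bourgeat--Piatnitski scheme as refined by Gloria and Shen via a Yurinski-type zero-order regularization. For each $T>0$ and $\xi\in\mathbb{R}^d$, introduce the regularized corrector $\chi^{T,\xi}$, the unique Besicovitch almost periodic solution of
\begin{equation*}
T^{-1}\chi^{T,\xi}-\dive\bigl(A(\nabla\chi^{T,\xi}+\xi)\bigr)=0\text{ in }\mathbb{R}^d,
\end{equation*}
together with its $Y_R$-periodic analogue $\chi^{R,T,\xi}\in H^1_\sharp(Y_R)$ associated with $A^R$. Both problems are uniquely solvable by Lax--Milgram since the mass term restores coercivity on the natural spaces $W$ and $H^1_\sharp(Y_R)$. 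Define regularized homogenized tensors $A^{T,*}_{kl}=\mathcal{M}(e_k\cdot A(e_l+\nabla\chi^{T,e_l}))$ and $A^{R,T,*}_{kl}=\mathcal{M}_{Y_R}(e_k\cdot A^R(e_l+\nabla\chi^{R,T,e_l}))$, and triangulate
\begin{equation*}
|A^*-A^{R,*}|\leq|A^*-A^{T,*}|+|A^{T,*}-A^{R,T,*}|+|A^{R,T,*}-A^{R,*}|.
\end{equation*}

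For the outer two terms, I would test the equation defining $\chi^{T,\xi}$ and its difference with the cell problem~\eqref{cellAP} against suitable combinations of the regularized and true correctors, then use coercivity together with the uniform $B^2$-bound proved in Subsection~\ref{boundednessofcellfunctions} to deduce $\|\nabla\chi^{T,\xi}-N^\xi\|_{B^2}\lesssim T^{-1/2}$. A Cauchy--Schwarz estimate then produces $|A^*-A^{T,*}|\lesssim T^{-\gamma_0}$ for some $\gamma_0\in(0,1]$; the same argument applied to the periodic problem gives $|A^{R,T,*}-A^{R,*}|\lesssim T^{-\gamma_0}$ with constants independent of $R$.

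The decisive and most delicate step is estimating the middle term in terms of $\rho(A,L)$ from~\eqref{modulusAP}. The guiding idea is that the regularization effectively localizes $\chi^{T,\xi}$ to scale $\sqrt{T}$: a weighted Caccioppoli computation produces off-diagonal energy decay of the regularized correctors on this scale. Since $A$ and $A^R$ coincide on $Y_R$ and $A^R$ is $2\pi R\mathbb{Z}^d$-periodic, for any point $y=y_0+2\pi Rp$ with $y_0\in Y_R$ the coefficient difference $A(y)-A^R(y)=A(y_0+2\pi Rp)-A(y_0)$ is controlled in sup norm by $\rho(A,2\pi R|p|)$ after an optimal translation. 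Testing the difference of the two regularized cell problems against $\chi^{T,\xi}-\chi^{R,T,\xi}$ and bookkeeping the boundary contribution from a $\sqrt{T}$-neighborhood of $\partial Y_R$ should yield
\begin{equation*}
|A^{T,*}-A^{R,T,*}|\lesssim\rho(A,R)+(\sqrt{T}/R)^{\alpha}
\end{equation*}
for some $\alpha>0$ controlled by the localization decay rate.

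Under the hypothesis $\rho(A,L)\lesssim L^{-\tau}$, the three contributions scale as $T^{-\gamma_0}$, $R^{-\tau}+T^{\alpha/2}R^{-\alpha}$, and $T^{-\gamma_0}$ respectively. Setting $T=R^{\sigma}$ with $\sigma\in(0,2)$ chosen so that $\sigma\gamma_0$, $\tau$ and $\alpha(1-\sigma/2)$ are comparable gives $|A^*-A^{R,*}|\lesssim R^{-\beta}$ with $\beta=\min\{\sigma\gamma_0,\tau,\alpha(1-\sigma/2)\}>0$, which can be arranged to lie in $(0,1)$ as required. I expect the main technical obstacle to be the quantitative localization of the regularized corrector in the almost periodic (non-mixing) setting and the careful reduction of the coefficient mismatch $A-A^R$ to $\rho(A,R)$: unlike the mixing stochastic case treated by Bourgeat--Piatnitski, no independence is available, so the argument must rely entirely on the deterministic modulus $\rho$ combined with energy methods in the spirit of Gloria--Otto and Shen.
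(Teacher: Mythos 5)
Your overall architecture (zero-order regularization, triangulation between regularized and exact tensors, boundary layer plus averaging error) is the same Bourgeat--Piatnitski/Gloria/Shen scheme the paper uses, but two of your three estimates rest on steps that do not hold as stated. First, the outer terms. You claim that testing the regularized equation and using coercivity plus the uniform bound of Subsection~\ref{boundednessofcellfunctions} gives $\|\nabla\chi^{T,\xi}-N^{\xi}\|_{B^2}\lesssim T^{-1/2}$, hence $|A^*-A^{T,*}|\lesssim T^{-\gamma_0}$, and that the same argument gives $|A^{R,T,*}-A^{R,*}|\lesssim T^{-\gamma_0}$ \emph{uniformly in} $R$. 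Neither is true by soft energy arguments. The basic estimate only yields $T^{-1}\mathcal{M}(|\chi^{T,\xi}|^2)+\mathcal{M}(|\nabla\chi^{T,\xi}|^2)\lesssim 1$, i.e.\ $\mathcal{M}(|\chi^{T,\xi}|^2)\lesssim T$, and the error term $T^{-1}\mathcal{M}(\chi^{T,\xi}\,\cdot\,)$ cannot be closed into a rate without quantitative sublinearity of the corrector potential --- exactly the point where the hypothesis $\rho(A,L)\lesssim L^{-\tau}$ enters; in the almost periodic (non-mixing) setting the soft argument gives convergence without any rate. The paper does not prove this step by energy methods at all: it invokes Shen's quantitative result (Theorem~\ref{convergence1}), which gives the $\tau$-dependent exponent $T^{-\tau/(2(\tau+1))+\omega}$. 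Likewise the periodic-regularization error is \emph{not} uniform in $R$: the natural estimate involves $\|w^{R,\xi}\|_{L^2(Y_R)}$, whose Poincar\'e constant grows like $R$, and the paper's Theorem~\ref{convergence4} obtains $|A^{R,*}_T-A^{R,*}|\lesssim R^{4-2\gamma}T^{-2}$ only after a rescaling argument plus a quantitative $R^{-\gamma}$ bound on the rescaled corrector (again using $\rho$); this $R$-dependence is what forces the coupling $T\sim R^{2-\gamma'}$ in the final bookkeeping, which your scheme ignores.

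Second, your treatment of the middle term misuses the modulus~\eqref{modulusAP}. From $\rho(A,L)$ you cannot conclude $\|A(\cdot+2\pi Rp)-A(\cdot)\|_{L^\infty}\lesssim\rho(A,2\pi R|p|)$: the definition only guarantees that for every translate $y$ there \emph{exists} some $z$ with $|z|\leq L$ making $A(\cdot+y)$ close to $A(\cdot+z)$; it does not make the specific lattice vectors $2\pi Rp$ almost periods, so the pointwise coefficient mismatch $A-A^R$ outside $Y_R$ is not small. The paper avoids this entirely: it compares the two regularized correctors only inside $Y_R$, where $A=A^R$, so their difference solves a homogeneous equation there and is controlled by a boundary layer via the Green's function decay on scale $\sqrt{T}$ (Theorems~\ref{Greenestimates} and~\ref{convergence3}); the remaining discrepancy between the full-space mean and the finite average is handled by the quantified ergodic theorem of Shen--Zhuge (Theorem~\ref{ergodictheorem}), which in turn requires the nontrivial input that $\nabla w_T^{\xi}$ inherits the modulus of almost periodicity of $A$ uniformly in $T$ and $R$ (Theorem~\ref{ShenContainment}). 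Your sketch gestures at the boundary layer but supplies neither this inheritance lemma nor a correct substitute for the quantified averaging step, so the claimed bound $\rho(A,R)+(\sqrt{T}/R)^{\alpha}$ is not justified as written. To repair the proposal you would essentially have to reinstate the paper's four-term splitting~\eqref{fourparts} and import the quantitative inputs of Shen and Shen--Zhuge at the places indicated.
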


%
%
%
%


\subsection{Strategy of Proof}  The proof of Theorem~\ref{THEOREM} will be done in four steps. We have already seen two cell problems corresponding to the almost periodic media and its Periodic approximation, viz.,~\eqref{cellAP} and~\eqref{CellP}. We shall require two more cell problems, corresponding to regularization of~\eqref{cellAP} and~\eqref{CellP}. For the sake of convenience, we list all the requisite cell problems below.   For $\xi\in\mathbb{R}^d$ and $T>0$:

\begin{itemize}
	\item[\bf (P)] Find $w^{R,\xi}\in H^1_{\sharp}(Y_R)$ such that 
	\begin{align}\label{cell4}
	-\nabla\cdot(A (\xi+\nabla w^{R,\xi}))=0.
	\end{align}
	\item[\bf (PT)] Find $ w_{T}^{R,\xi}\in H^1_\sharp(Y_R)$ such that 
	\begin{align}\label{cell3}
	-\nabla\cdot(A (\xi+\nabla w_{T}^{R,\xi}))+T^{-1}w_{T}^{R,\xi}=0.
	\end{align}
\end{itemize}
\begin{itemize}
	\item[\bf (AP)] Find $N^{\xi}\in (B^2(\mathbb{R}^d))^d$ such that 
	\begin{align}\label{cell1}
	\mathcal{M}\left(A N^{\xi}\cdot v\right)=-\mathcal{M}(A\xi\cdot v)
	\end{align} for all $v\in\{\nabla\phi:\phi\in \Trig(\mathbb{R}^d)\}$.
	\item[\bf (APT)] Find $ w_{T}^{\xi}\in H^1_{\loc}(\mathbb{R}^d)$ such that 
	\begin{align}\label{cell2}
	-\nabla\cdot(A (\xi+\nabla w_{T}^{\xi}))+T^{-1}w_{T}^{\xi}=0.
	\end{align}
\end{itemize}



The homogenized tensor $A^*$ is defined as $$\xi\cdot A^*\xi=\mathcal{M}\left((\xi+N^{\xi})\cdot A(\xi+N^{\xi})\right).$$ Define $A^*_T$ as \begin{align}\label{homoAPT}\xi\cdot A^*_T\xi=\mathcal{M}\left((\xi+\nabla w_{T}^{\xi})\cdot A(\xi+\nabla w_{T}^{\xi})\right).\end{align} Also, define the truncated average $\overline{A}_{T,R}$ as $$\xi\cdot \overline{A}_{T,R}\xi=\frac{1}{|Y_R|}\int_{Y_R}\left((\xi+\nabla w_{T}^{\xi})\cdot A(\xi+\nabla w_{T}^{\xi})\right)~dy,$$ and  define the Periodic approximation $A^{R,*}$ to $A^*$ as $$\xi\cdot A^{R,*}\xi=\frac{1}{|Y_R|}\int_{Y_R}\left((\xi+\nabla w^{R,\xi})\cdot A(\xi+\nabla w^{R,\xi})\right)~dy.$$ The homogenized tensor corresponding to the regularized Periodic cell problem~\eqref{cell3} is \begin{align}\label{homoAPTD}\xi\cdot A^{R,*}_{T}\xi=\frac{1}{|Y_R|}\int_{Y_R}\left((\xi+\nabla w_{T}^{R,\xi})\cdot A(\xi+\nabla w_{T}^{R,\xi})\right)~dy.\end{align}

With the notation in place, we can proceed with the strategy for obtaining the rate of convergence estimates. This is essentially the same as the one employed by Bourgeat and Piatnitski~\cite{BourgeatPiatnitski2004} to obtain estimates for approximations of homogenized tensor for random ergodic media. We shall write \begin{align}\label{fourparts}|A^*-A^{R,*}|\leq|A^*-A^*_T|+|A^*_T-\overline{A}_{T,R}|+|\overline{A}_{T,R}-A^{R,*}_{T}|+|A^{R,*}_{T}-A^{R,*}|\end{align}

In the above inequality, the first and last terms on RHS are estimated in terms of the rate of convergence of regularized correctors to the exact correctors as $T\to\infty$. The proof of this estimate for the first term is available in Shen~\cite{Shen2015}. For the proof of estimate for the last term, we adapt the argument in Bourgeat and Piatnitski~\cite{BourgeatPiatnitski2004}.

The second term corresponds to rate of convergence in mean ergodic theorems. This estimate is available for periodic and quasiperiodic functions and is of order $1/R$. In Blanc and Le Bris~\cite{BlancLeBris2010} and Gloria~\cite{Gloria2011}, a different truncated approximation is proposed, through the use of filters; either as a weight in the cell problem or as post-processing. Such approximations have faster rates of convergence. However, we shall write this rate of convergence in terms of $\rho(A,L)$ following~\cite{ShenZhuge2018}.

The third term on RHS corresponds to a boundary term which is controlled by the Green's function decay of the regularized operator $T^{-1}-\nabla\cdot(A\nabla)$ in $\mathbb{R}^d$. The proof is essentially due to Bourgeat and Piatnitski~\cite{BourgeatPiatnitski2004} but has lately been refined by Gloria~\cite{Gloria2011} (also see~\cite{GloriaOtto2017}).

In the next subsections, we shall prove the four convergence rates.

\subsection{Rate of convergence of regularized correctors}
We will begin by establishing the existence of the regularized correctors as defined in~\eqref{cell2}. This can be done in two ways. One is by following the derivation theory of Besicovitch spaces as presented in Casado-D\'{i}az and Gayte~\cite{CasadoDiaz2002}. The other method is to build solutions in $H^1_{\loc}(\mathbb{R}^d)$ directly by approximations on disks~\cite{Yurinski1989},~\cite{Shen2015}. The second method is more general as it does not require the assumption of almost periodicity on the coefficients. However, the existence of a derivation theory on Besicovitch spaces makes it easier to obtain a priori estimates. 

For $p\in(1,\infty)$, ${B}^p(\mathbb{R}^d)$ is the closure of trigonometric polynomials in the semi-norm $\mathcal{M}(|\cdot|^p))^{1/p}$. Let $D^\infty$ be the space
\begin{align}
D^{\infty}\coloneqq\{\,\phi\in C^\infty(\mathbb{R}^d)\,:\,D^\alpha\phi\in B^1(\mathbb{R}^d)\cap L^\infty(\mathbb{R}^d)\mbox{ for all multiindices } \alpha \,\},
\end{align} which is analogous to the space of test functions for defining weak derivatives in the theory of distributions. Next, given a function $u\in B^1(\mathbb{R}^d)$, define its mean derivative $\partial_{j}u$ as a linear map on $D^\infty$ given by $\partial_{j}u(\phi)\coloneqq-\mathcal{M}\left(u\frac{\partial \phi}{\partial x_j}\right)$. This definition is well defined in the sense that if $u_1$ and $u_2$ are two functions in $B^1(\mathbb{R}^d)$ such that $\mathcal{M}(|u_1-u_2|)=0$, then they define the same mean derivative. Moreover, if the distributional derivative of a function $u\in B^1(\mathbb{R}^d)$ is also in $B^1(\mathbb{R}^d)$, then it agrees with the mean derivative of $u$. The following definition of the Besicovitch analogue of Sobolev spaces is presented in~\cite{CasadoDiaz2002}:
\begin{align}
B^{1,p}(\mathbb{R}^d)\coloneqq\{\,u\in B^p(\mathbb{R}^d)\,:\,\exists~u_j\in B^p(\mathbb{R}^d) \mbox{ such that } \partial_{j}u(\phi)=\mathcal{M}(u_j\phi),~1\leq j\leq d\,\}.	
\end{align} This space admits the semi-norm 
\begin{align*}
	|u|_{\mathcal{M}}=\mathcal{M}(|u|)+\mathcal{M}(|\nabla u|).
\end{align*}
It can be made into a Banach space by identifying those elements whose difference has zero semi-norm. We shall continue to denote the associated Banach space as $B^{1,p}(\mathbb{R}^d)$. Further, every representative $u$ is an element of $W^{1,p}_{\loc}(\mathbb{R}^d)$ with the property that any two representatives $u_1$ and $u_2$ satisfy $|u_1-u_2|_\mathcal{M}=0$.

\begin{theorem}
	Let the matrix $A$ satisfy~\ref{A1},~\ref{A2},~\ref{A3}. Then equation~\eqref{cell2} has a unique solution $w_{T}^{\xi}\in B^{1,2}(\mathbb{R}^d)$, and
	\begin{align}\label{boundonpenalized}
	T^{-1}\mathcal{M}(|w_{T}^{\xi}|^2)+\mathcal{M}(|\nabla w_{T}^{\xi}|^2)\lesssim 1.
	\end{align}
\end{theorem}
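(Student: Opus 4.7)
The plan is to apply the Lax--Milgram lemma to a suitable variational formulation of~\eqref{cell2} on the Hilbert space $B^{1,2}(\mathbb{R}^d)$. Concretely, I would interpret~\eqref{cell2} as: find $w_{T}^{\xi}\in B^{1,2}(\mathbb{R}^d)$ such that
$$a_T(w_T^\xi,\phi) := \mathcal{M}(A\nabla w_T^\xi\cdot\nabla\phi) + T^{-1}\mathcal{M}(w_T^\xi\,\phi) = -\mathcal{M}(A\xi\cdot\nabla\phi)$$
for every $\phi\in D^\infty$; this then extends to test functions in $B^{1,2}(\mathbb{R}^d)$ by density. Equip $B^{1,2}(\mathbb{R}^d)$ with the natural inner product $(u,v)_T := T^{-1}\mathcal{M}(uv) + \mathcal{M}(\nabla u\cdot\nabla v)$, which makes it a Hilbert space after the identification of $|\cdot|_\mathcal{M}$-null elements.

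The three Lax--Milgram hypotheses are then quick to verify using only the $L^\infty$ bound and ellipticity of $A$. Continuity of $a_T$ and of the linear form $\ell(\phi) := -\mathcal{M}(A\xi\cdot\nabla\phi)$ follow from the Cauchy--Schwarz inequality for the Besicovitch mean together with $|a_{kl}|\leq \|A\|_{L^\infty}$. Coercivity is immediate from assumption~\ref{A3}:
$$a_T(u,u) \geq \alpha\,\mathcal{M}(|\nabla u|^2) + T^{-1}\mathcal{M}(|u|^2) \geq \min(\alpha,1)\,\|u\|_T^2.$$
Lax--Milgram therefore produces a unique solution $w_T^\xi\in B^{1,2}(\mathbb{R}^d)$. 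The a priori estimate~\eqref{boundonpenalized} is then obtained by the standard energy argument: testing against $w_T^\xi$ itself gives
$$\mathcal{M}(A\nabla w_T^\xi\cdot\nabla w_T^\xi) + T^{-1}\mathcal{M}(|w_T^\xi|^2) = -\mathcal{M}(A\xi\cdot\nabla w_T^\xi),$$
and bounding the right-hand side via Cauchy--Schwarz and Young's inequality absorbs the $\mathcal{M}(|\nabla w_T^\xi|^2)^{1/2}$ factor into the coercive left-hand side, leaving $\mathcal{M}(|\nabla w_T^\xi|^2) + T^{-1}\mathcal{M}(|w_T^\xi|^2) \lesssim |\xi|^2$.

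The one non-routine point, and where I would focus the technical writing, is reconciling the Hilbert structure needed for Lax--Milgram with the semi-norm $|\cdot|_\mathcal{M}$ that defines $B^{1,2}(\mathbb{R}^d)$ in the text, and checking that $\mathcal{M}(A\nabla u\cdot\nabla v)$ and $\mathcal{M}(uv)$ descend to well-defined bilinear forms on the quotient. The density argument needed to pass from test functions in $D^{\infty}$ to arbitrary $\phi\in B^{1,2}(\mathbb{R}^d)$ — which invokes the defining property that mean derivatives of $B^{1,2}$ elements are represented by $B^2$ functions — is also worth a careful sentence. Alternatively, one could bypass the Besicovitch framework entirely by constructing $w_T^\xi$ first in $H^1_{\mathrm{loc}}(\mathbb{R}^d)$ via an exhaustion by balls, as in Yurinski~\cite{Yurinski1989} and Shen~\cite{Shen2015}, and then verify membership in $B^{1,2}(\mathbb{R}^d)$ a posteriori using the uniform energy bound together with the almost periodicity of $A$; I would regard the Lax--Milgram route as cleaner provided the functional-analytic groundwork on $B^{1,2}$ from~\cite{CasadoDiaz2002} is invoked.
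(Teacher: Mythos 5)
Your proposal is correct and follows essentially the same route as the paper: a Lax--Milgram argument for the bilinear form $\mathcal{M}(A\nabla w\cdot\nabla v + T^{-1}wv)$ on the Hilbert space $B^{1,2}(\mathbb{R}^d)$ with test functions from $D^\infty$, followed by the energy estimate obtained by testing with $w_T^{\xi}$ and applying Young's inequality. The additional care you flag about the quotient structure and density, and the alternative construction via $H^1_{\loc}$ approximations, are consistent with (and slightly more detailed than) what the paper records.
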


\begin{proof}
	The space $B^{1,2}(\mathbb{R}^d)$ is a Hilbert space. Define the bilinear form 
	\begin{align*}
	a(w,v)\coloneqq\mathcal{M}(A\nabla w\nabla v+T^{-1}wv),
	\end{align*} which is elliptic due to coercivity of $A$. Also, define the linear form 
	\begin{align*}
	l(v)\coloneqq -\mathcal{M}(A\xi\cdot\nabla v)
	\end{align*} for $v\in B^{1,2}(\mathbb{R}^d)$. The equation~\eqref{cell2} is said to have a solution in $B^{1,2}(\mathbb{R}^d)$ if there exists $w_{T}^{\xi}\in B^{1,2}(\mathbb{R}^d)$ such that $a(w_{T}^{\xi},v)=l(v)$ for all $v\in D^\infty$. The existence and uniqueness of such a solution is guaranteed by an application of Lax-Milgram lemma. Each representative of $w_T^{\xi}\in B^{1,2}(\mathbb{R}^d)$ is an element of $H^1_{\loc}(\mathbb{R}^d)$. The estimate~\eqref{boundonpenalized} is obtained from the weak formulation by choosing $v=w_{T}^{\xi}$ followed by an application of Young's inequality.
\end{proof}

The convergence rate for the first term in~\eqref{fourparts} is available in Shen~\cite{Shen2015} in terms of the function $\rho(A,\cdot)$.

\begin{theorem}[(Shen~\cite{Shen2015}, Remark~6.7)]\label{convergence1} Let $\rho(A,L)$ satisfy $\rho(A,L)\lesssim 1/L^\tau$ for some $\tau>0$. Then for any $\omega$ such that $0<\omega<1$, 
	\begin{align}|A^*-A^*_T|\leq C {{T}^{-\frac{\tau}{2(\tau+1)}+\omega}},\end{align}	
	where the constant is independent of $T$ but depends on $\omega$. 
\end{theorem}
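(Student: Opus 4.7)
My plan is to control $|A^*-A^*_T|$ by first reducing it, through energy identities, to a quantitative comparison of the exact corrector $N^\xi$ with the regularized corrector $\nabla w_T^\xi$, and then to exploit the bound $\rho(A,L)\lesssim L^{-\tau}$ by optimally balancing the regularization scale $\sqrt{T}$ against an auxiliary length scale $L$. Testing the regularized cell problem \textbf{(APT)} against $w_T^\xi$ and using the symmetry of $A$, I obtain
\begin{align*}
\xi\cdot A^*_T\,\xi &= \mathcal{M}(A\xi\cdot\xi) + \mathcal{M}(A\nabla w_T^\xi\cdot\xi) - T^{-1}\mathcal{M}(|w_T^\xi|^2),\\
\xi\cdot A^*\,\xi &= \mathcal{M}(A\xi\cdot\xi) + \mathcal{M}(AN^\xi\cdot\xi),
\end{align*}
where the second identity comes from testing \textbf{(AP)} with $V=N^\xi\in W$. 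Subtracting,
\begin{align*}
\xi\cdot(A^*-A^*_T)\xi = \mathcal{M}\bigl(A(N^\xi-\nabla w_T^\xi)\cdot\xi\bigr) + T^{-1}\mathcal{M}(|w_T^\xi|^2),
\end{align*}
so it suffices to bound the mass term and the cross term. The mass term is already controlled by the basic energy estimate $T^{-1}\mathcal{M}(|w_T^\xi|^2)\lesssim 1$, and a sharper rate for it will drop out of the same argument that handles the cross term.

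For the cross term, I would estimate $\mathcal{M}(|N^\xi-\nabla w_T^\xi|^2)$ via the following quantitative-almost-periodicity argument. The hypothesis $\rho(A,L)\lesssim L^{-\tau}$ provides, for each $y\in\mathbb{R}^d$, a point $z$ with $|z|\leq L$ such that $\|A(\cdot+y)-A(\cdot+z)\|_{L^\infty}\lesssim L^{-\tau}$; these are the ``$L^{-\tau}$-almost periods'' of $A$. Because the operator $T^{-1}-\nabla\cdot(A\nabla)$ on $\mathbb{R}^d$ has a Green's function with exponential decay on the scale $\sqrt{T}$, the regularized corrector $w_T^\xi$ sees the coefficient field only in a ball of radius $\sim\sqrt{T}$. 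Combining these two facts through an energy estimate for the difference of translates yields a quantitative stability $\mathcal{M}(|\nabla w_T^\xi(\cdot+y)-\nabla w_T^\xi(\cdot+z)|^2)\lesssim L^{-2\tau}$, up to boundary corrections of order $L/\sqrt{T}$ accounting for points near the edge of the localization ball. A Meyers-type higher-integrability upgrade for $\nabla w_T^\xi$ converts the $L^\infty$ defect of $A$ into a slightly weaker norm and produces the small loss $\omega>0$ in the final exponent.

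Packaging the quantitative almost periodicity of $\nabla w_T^\xi$ into a mean-ergodic estimate produces an error of the schematic form $L^{-\tau}+L/\sqrt{T}$; optimizing in $L$ gives $L\sim T^{1/(2(\tau+1))}$ and the announced rate $T^{-\tau/(2(\tau+1))+\omega}$. \emph{The main obstacle} is making the comparison against $N^\xi$ rigorous, since $N^\xi$ is only an abstract limit of trigonometric gradients in $(B^2(\mathbb{R}^d))^d$ and need not be the gradient of any globally-defined function; one therefore cannot simply subtract the weak forms of \textbf{(AP)} and \textbf{(APT)} and test against $N^\xi-\nabla w_T^\xi$. I would circumvent this by working entirely with regularized objects, proving a Cauchy-type estimate comparing $w_T^\xi$ and $w_{T'}^\xi$ for $T'\gg T$ (both of which are bona fide elements of $H^1_{\loc}(\mathbb{R}^d)\cap B^{1,2}(\mathbb{R}^d)$) with a quantitative bound depending on $\rho(A,\cdot)$, and then recovering $N^\xi$ in the limit $T'\to\infty$ from Lemma established in Section~\ref{section6}; the Cauchy bound then transfers the rate to the limiting object without ever touching $N^\xi$ directly.
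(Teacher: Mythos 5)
The paper does not prove this statement at all: Theorem~\ref{convergence1} is imported verbatim from Shen (Remark~6.7 of~\cite{Shen2015}) and used as a black box in the proof of Theorem~\ref{THEOREM}, so your proposal has to be judged as an attempt to reconstruct Shen's argument. Your opening reduction is sound: testing \textbf{(APT)} with $w_T^\xi$ and \textbf{(AP)} with $N^\xi$ does give $\xi\cdot(A^*-A^*_T)\xi=\mathcal{M}\bigl(A\xi\cdot(N^\xi-\nabla w_T^\xi)\bigr)+T^{-1}\mathcal{M}(|w_T^\xi|^2)$, your translate-stability claim for $\nabla w_T^\xi$ is exactly Theorem~\ref{ShenContainment}, and the balance of $\rho(A,L)\approx L^{-\tau}$ against $L/\sqrt{T}$, optimized at $L\sim T^{1/(2(\tau+1))}$, is indeed where the exponent $\tau/(2(\tau+1))$ comes from.

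However, the proposal stops exactly where the real work begins, so there is a genuine gap. First, the mass term: the a priori bound~\eqref{boundonpenalized} only gives $T^{-1}\mathcal{M}(|w_T^\xi|^2)\lesssim 1$, and the assertion that a rate ``will drop out of the same argument'' is unsupported; showing that $T^{-1}\mathcal{M}(|w_T^\xi|^2)$ decays with the stated power (essentially $\mathcal{M}(|w_T^\xi|^2)^{1/2}\lesssim \sqrt{T}\,\Theta(T)$ for a suitable modulus $\Theta$ built from $\rho$) is itself one of Shen's main estimates and needs the full Green-function plus quantified-almost-periodicity machinery. Second, the mechanism you invoke for the cross term does not apply as stated: the Shen--Zhuge ergodic estimate (Theorem~\ref{ergodictheorem}) compares a finite cube average with $\mathcal{M}$, and its error term $L/R$ involves the truncation radius $R$; in $|A^*-A^*_T|$ both tensors are full-space means and there is no truncation radius, so the $L/\sqrt{T}$ contribution cannot be obtained by ``packaging almost periodicity of $\nabla w_T^\xi$ into a mean-ergodic estimate.'' In Shen's proof it arises from a quantitative comparison of $\nabla w_T^\xi$ with $\nabla w_{2T}^\xi$, the Green-function decay scale $\sqrt{T}$ playing the role of the effective averaging radius, followed by a dyadic summation in $T$ to reach the limit object. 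Your final paragraph does identify this Cauchy-in-$T$ route (and correctly notes one cannot test with $N^\xi-\nabla w_T^\xi$), but you give no argument for the Cauchy estimate itself, which is the technical core; the attribution of the $\omega$-loss to a Meyers upgrade is likewise only a guess. As it stands the proposal is a correct reduction plus the correct scaling heuristic, not a proof.
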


\subsection{Rate of convergence of truncated homogenized tensor}

In proving the convergence of truncated averages $\overline{A}_{T,R}$ to $A^*_T$, we need to show that the almost periodicity of the correctors $w_{T}^{\xi}$ can be quantified in terms of the almost periodicity of $A$. This is the content of the following theorem from Shen~\cite{Shen2015}.

\begin{theorem}(Shen~\cite{Shen2015}, Lemma 5.3)\label{ShenContainment}
	For $y,z\in\mathbb{R}^d$, the regularized corrector $w_{T}^{\xi}$ satisfies \begin{align}
	\left(\dashint_{Y_{R}}|\nabla w_{T}^{\xi}(t+y)-\nabla w_{T}^{\xi}(t+z)|^2~dt\right)^{1/2}\leq C||A(\cdot+y)-A(\cdot+z)||_{L^\infty(\mathbb{R}^d)},
	\end{align} where $C$ is independent of $R, y$, and $z$.
\end{theorem}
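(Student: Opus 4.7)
The plan is to analyze the PDE satisfied by the translation difference and combine a localized Caccioppoli estimate with the global Besicovitch energy identity. Set $u(t) := w_{T}^{\xi}(t+y) - w_{T}^{\xi}(t+z)$ and $B(t) := A(t+y) - A(t+z)$, noting $\|B\|_{L^\infty(\mathbb{R}^d)} = \|A(\cdot+y) - A(\cdot+z)\|_{L^\infty(\mathbb{R}^d)}$. Because the regularized corrector equation~\eqref{cell2} is translation covariant, $w_{T}^{\xi}(\cdot+y)$ solves~\eqref{cell2} with the shifted coefficient $A(\cdot+y)$, and similarly for $z$. Subtracting, one obtains, in $H^{-1}_{\loc}(\mathbb{R}^d)$,
\begin{equation*}
-\nabla\cdot\bigl(A(\cdot+y)\nabla u\bigr) + T^{-1} u = \nabla\cdot\bigl(B\,(\xi + \nabla w_{T}^{\xi}(\cdot+z))\bigr).
\end{equation*}

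First I would apply a localized energy estimate. Choose a smooth cutoff $\eta$ with $\eta \equiv 1$ on $Y_R$, $\operatorname{supp}\eta \subset Y_{2R}$, and $|\nabla\eta|\lesssim 1/R$. Testing the equation for $u$ against $u\eta^2$, using ellipticity of $A$ and Young's inequality to absorb the leading gradient terms, one arrives at the Caccioppoli-type bound
\begin{equation*}
\dashint_{Y_R}|\nabla u|^2 \;\lesssim\; \frac{1}{R^{2}}\dashint_{Y_{2R}} u^{2} \;+\; \|B\|_{\infty}^{2}\,\dashint_{Y_{2R}}\bigl(1 + |\nabla w_{T}^{\xi}(\cdot+z)|^{2}\bigr).
\end{equation*}
The forcing average on the right is handled by running the same Caccioppoli argument on $w_{T}^{\xi}$ itself (test~\eqref{cell2} against $w_{T}^{\xi}\eta^{2}$). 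Together with the global bound $T^{-1}\mathcal{M}(|w_{T}^{\xi}|^{2})+\mathcal{M}(|\nabla w_{T}^{\xi}|^{2})\lesssim 1$ from~\eqref{boundonpenalized}, this yields a uniform local bound $\dashint_{Y_{2R}+z}|\nabla w_{T}^{\xi}|^{2}\lesssim 1$ independent of $R$ and $z$, which takes care of the second term.

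The main obstacle is the first term: passing from a global Besicovitch control to a uniform-in-$R$ local control of $u^{2}$. To this end, I would test the PDE for $u$ against $u$ itself in the Besicovitch weak formulation, which is justified since $u \in B^{1,2}(\mathbb{R}^{d})$ as the difference of two shifted regularized correctors. Using coercivity, Young's inequality, and $\mathcal{M}(|\nabla w_{T}^{\xi}|^{2}) \lesssim 1$, this produces the clean global bound
\begin{equation*}
\mathcal{M}(|\nabla u|^{2}) + T^{-1}\mathcal{M}(u^{2}) \;\lesssim\; \|B\|_{\infty}^{2}.
\end{equation*}
The delicate step is then to promote this Besicovitch-mean bound to a uniform $Y_{R}$-local bound on $u^{2}$; I expect this to be the heart of the proof. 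The strategy is to iterate the Caccioppoli inequality across dyadic shells $Y_{2^{k}R}$ and invoke a hole-filling/Meyers-type absorption argument, exploiting the mass term $T^{-1}u^{2}$ to close the induction (this is exactly where the regularization is essential). This yields $\dashint_{Y_{2R}}u^{2} \lesssim R^{2}\|B\|_{\infty}^{2}$ uniformly in $R$, and substituting back into the Caccioppoli inequality gives the stated estimate with a constant $C$ depending only on $d$, $\alpha$, and $\|A\|_{\infty}$.
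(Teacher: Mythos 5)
Your starting point is correct and matches the cited argument: the translated correctors solve \eqref{cell2} with translated coefficients, and the difference $u=w_{T}^{\xi}(\cdot+y)-w_{T}^{\xi}(\cdot+z)$ satisfies $-\nabla\cdot(A(\cdot+y)\nabla u)+T^{-1}u=\nabla\cdot\bigl(B(\xi+\nabla w_{T}^{\xi}(\cdot+z))\bigr)$ with $B=A(\cdot+y)-A(\cdot+z)$. (Note the paper itself gives no proof of this statement; it is quoted from Shen, Lemma~5.3, so the comparison is with that argument.) The genuine gap is in how you obtain local, translation-uniform control from \eqref{boundonpenalized}. Twice you pass from a bound on Besicovitch mean values $\mathcal{M}(\cdot)$ to a bound on averages over a fixed cube: first to claim $\dashint_{Y_{2R}+z}|\nabla w_{T}^{\xi}|^{2}\lesssim 1$ uniformly in $R$ and $z$, and then to claim $\dashint_{Y_{2R}}u^{2}\lesssim R^{2}\|B\|_{\infty}^{2}$. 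A bound on $\mathcal{M}(g)$ controls only the limit of $\dashint_{Y_L}g$ as $L\to\infty$; it gives no uniform bound on $\dashint_{Y_R+z}g$ for a finite $R$ and arbitrary translate $z$ --- quantifying exactly that passage is the content of results like Theorem~\ref{ergodictheorem}, which need a modulus of almost periodicity and carry an error term. So neither the forcing term in your Caccioppoli inequality nor the term $R^{-2}\dashint_{Y_{2R}}u^{2}$ is actually estimated, and these are the heart of the lemma.

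The proposed repair (dyadic Caccioppoli plus hole-filling ``exploiting the mass term'') is only a sketch and, as stated, does not close: the zeroth-order term $T^{-1}u^{2}$ furnishes the length scale $\sqrt{T}$, not $R$, so what such arguments naturally give is $\dashint u^{2}\lesssim T\|B\|_{\infty}^{2}$ on balls of radius $\sim\sqrt{T}$, which yields your target $R^{2}\|B\|_{\infty}^{2}$ only when $T\lesssim R^{2}$ --- an assumption absent from the statement, whose constant is claimed independent of $R$; and the iteration anyway presupposes the uniform local bound on the forcing that is missing in the first step. The cited proof avoids \eqref{boundonpenalized} altogether and derives translation-uniform local estimates at scale $\sqrt{T}$ directly from the equation, using the exponential decay of the Green's function of $T^{-1}-\nabla\cdot(A\nabla)$ (the bounds the paper quotes in Theorem~\ref{Greenestimates}), or equivalently Agmon-type exponentially weighted Caccioppoli estimates: first applied to \eqref{cell2} itself, whose datum $A\xi$ is in $L^{\infty}$, to get $\sup_{x}\dashint_{B(x,\sqrt{T})}|\xi+\nabla w_{T}^{\xi}|^{2}\lesssim 1$; then applied to the difference equation to get $\sup_{x}\dashint_{B(x,\sqrt{T})}|\nabla u|^{2}\lesssim\|B\|_{\infty}^{2}$; and finally covering $Y_R$ by balls of radius $\sqrt{T}$ (the regime in which the paper uses the lemma, $T\sim R^{2-\gamma'}$, makes this covering legitimate). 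If you replace your two mean-value steps by this weighted local estimate, the algebraic part of your argument goes through.
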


Further, Shen and Zhuge~\cite{ShenZhuge2018} have quantified the convergence of truncated averages in terms of almost periodicity of the integrands in the following theorem.

\begin{theorem}(Shen \& Zhuge~\cite{ShenZhuge2018})\label{ergodictheorem}
	For $1<p<\infty$ let $u\in B^p(\mathbb{R}^d)$ and for $p=\infty$ let $u\in AP(\mathbb{R}^d)$. Then for any $0<L\leq R<\infty$,
	\begin{align}
	\left|\dashint_{Y_R}u\,dy-\mathcal{M}(u)\right|\lesssim  \sup_{y\in\mathbb{R}^d}\inf_{|z|\leq L}\dashint_{Y_R}|u(t+y)-u(t+z)|~dt+\left(\frac{L}{R}\right)^{1/p'}\begin{cases}
	||u||_{B^p} & \text{if } p<\infty\\
	||u||_{L^\infty}           & \text{if } p=\infty
	\end{cases}
	\end{align}
\end{theorem}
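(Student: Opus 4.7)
The plan is to use the translation invariance of the mean value $\mathcal{M}$ to rewrite $\dashint_{Y_R}u - \mathcal{M}(u)$ as a mean of displacements of the function $F(y) := \dashint_{Y_R}u(t+y)\,dt$, and then estimate $F(y)-F(0)$ uniformly in $y$ by splitting through a near-translate $z$ of length at most $L$ supplied by the modulus on the right-hand side.

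On trigonometric polynomials $u(t)=\sum_j a_j e^{i\lambda_j\cdot t}$ one computes $F(y)=\sum_j a_j c_{R,j} e^{i\lambda_j\cdot y}$ with $c_{R,0}=1$, so $\mathcal{M}(F)=a_0=\mathcal{M}(u)$; by density of $\Trig(\mathbb{R}^d)$ in $B^p(\mathbb{R}^d)$ and in $AP(\mathbb{R}^d)$ this identity persists for general $u$. Hence
\begin{align*}
\dashint_{Y_R}u\,dt - \mathcal{M}(u) \;=\; F(0) - \mathcal{M}(F) \;=\; \mathcal{M}_y\bigl(F(0) - F(y)\bigr),
\end{align*}
and since $|\mathcal{M}_y(g)|\le \sup_y|g|$ whenever $g$ is bounded almost periodic, it suffices to bound $|F(0)-F(y)|$ uniformly in $y$.

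For fixed $y$, choose $z=z(y)$ with $|z|\le L$ almost achieving the infimum defining the modulus, and split $F(0)-F(y)=[F(0)-F(z)]+[F(z)-F(y)]$. The second bracket is bounded directly:
\begin{align*}
|F(z)-F(y)| \;\le\; \dashint_{Y_R}|u(t+y)-u(t+z)|\,dt \;\le\; \sup_{y'}\inf_{|z'|\le L}\dashint_{Y_R}|u(t+y')-u(t+z')|\,dt + \varepsilon .
\end{align*}
For the first bracket, changing variables $s=t+z$ gives
\begin{align*}
|F(0)-F(z)| \;\le\; \frac{1}{|Y_R|}\int_{Y_R \triangle (Y_R+z)}|u(s)|\,ds,
\end{align*}
and since $|z|\le L$ the symmetric difference has measure $\lesssim R^{d-1}L$ and sits inside $Y_{R+cL}$. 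H\"older with exponent $p$ and $L\le R$ then yields
\begin{align*}
|F(0)-F(z)| \;\lesssim\; \frac{(R^{d-1}L)^{1/p'}}{R^d}\left(\int_{Y_{R+cL}}|u|^p\,ds\right)^{\!1/p} \;\lesssim\; \left(\frac{L}{R}\right)^{\!1/p'}\!\|u\|_{B^p},
\end{align*}
where the last step uses $\dashint_{Y_{R+cL}}|u|^p\to\|u\|_{B^p}^p$ to control the normalized integral uniformly for $R$ large. For $p=\infty$ the analogous pointwise estimate gives $(L/R)\|u\|_{L^\infty}$ directly.

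Combining these two estimates, taking the supremum in $y$, letting $\varepsilon\downarrow 0$, and invoking $|F(0)-\mathcal{M}(u)|\le\sup_y|F(0)-F(y)|$ produces the stated inequality. The principal technical obstacle is the final H\"older step in the $B^p$ setting: the bound $\int_{Y_{R+cL}}|u|^p\lesssim R^d\|u\|_{B^p}^p$ does not follow immediately from the seminorm definition of $B^p$ (unlike the $L^p$ case), and one has to obtain it by approximating $u$ in the Besicovitch seminorm by trigonometric polynomials (on which it is trivial) and carefully controlling the remainder to make the constant uniform in $R$ for large $R$.
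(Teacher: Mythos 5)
The paper does not prove this statement at all: it is quoted verbatim from Shen \& Zhuge~\cite{ShenZhuge2018} and used as a black box, so there is no in-paper argument to compare against; what follows assesses your proposal on its own. Your overall scheme --- writing $\dashint_{Y_R}u-\mathcal{M}(u)=\mathcal{M}_y\bigl(F(0)-F(y)\bigr)$ with $F(y)=\dashint_{Y_R}u(t+y)\,dt$, splitting through a near-translate $|z|\le L$ furnished by the sup--inf modulus, and estimating the remaining pure-translation error as a boundary-layer term via H\"older --- is the natural route, the exponent bookkeeping $(LR^{d-1})^{1/p'}R^{-d}(R^d)^{1/p}=(L/R)^{1/p'}$ is correct, and for $p=\infty$ the argument is complete, since the layer term is bounded pointwise by $(L/R)\|u\|_{L^\infty}$.

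For $1<p<\infty$, however, the step you yourself flag is a genuine gap, not a technicality, and the fix you propose cannot close it. You need $\bigl(\dashint_{Y_{2R}}|u|^p\bigr)^{1/p}\lesssim\|u\|_{B^p}$ with a constant uniform in $R$, and this is false: the Besicovitch seminorm is purely asymptotic --- it vanishes on every compactly supported $L^p$ function --- while the average of $|u|^p$ over the fixed cube $Y_{2R}$, and in particular over the boundary layer $Y_{R+L}\setminus Y_{R-L}$ that actually enters your bound for $|F(0)-F(z)|$, can be made arbitrarily large without changing $\|u\|_{B^p}$ at all. Approximating $u$ by trigonometric polynomials in the Besicovitch seminorm does not help, because smallness of $\mathcal{M}(|u-P_n|^p)$ gives no control of $\int_{Y_{2R}}|u-P_n|^p$ for a fixed finite $R$ (the same defect affects your density argument for $\mathcal{M}_y(F)=\mathcal{M}(u)$, though that identity can instead be justified directly by Fubini and translation invariance of $\mathcal{M}$, so it is harmless). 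As written, your argument therefore proves the inequality only with $\|u\|_{B^p}$ replaced by the local quantity $\bigl(\dashint_{Y_{2R}}|u|^p\bigr)^{1/p}$. To recover the stated bound one must handle the boundary layer differently --- for instance, use the almost-periodicity (the same sup--inf modulus) a second time to transfer the layer average to distant windows, whose $L^p$ averages are controlled by $\|u\|_{B^p}$ asymptotically, which is essentially what the Shen--Zhuge proof does --- rather than estimating the fixed window directly by the seminorm.
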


As a consequence of the two theorems stated above, we can prove the rate of convergence estimate $|\overline{A}_{T,R}-A^*_T|$.

\begin{theorem}\label{convergence2}
	Let $\rho(A,L)$ satisfy $\rho(A,L)\lesssim 1/L^\tau$ for some $\tau>0$. Then for any $0<L\leq R<\infty$,
	\begin{align}\label{estimate2}|\overline{A}_{T,R}-A^*_T|\lesssim \frac{1}{L^\tau}+\left(\frac{L}{R}\right)^{1/2}.\end{align}	 
\end{theorem}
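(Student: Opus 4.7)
The plan is to apply the quantitative ergodic theorem of Shen and Zhuge (Theorem~\ref{ergodictheorem}) to the energy density
\[
u(y) \coloneqq (\xi+\nabla w_T^\xi(y))\cdot A(y)(\xi+\nabla w_T^\xi(y)),
\]
whose mean value equals $\xi\cdot A^*_T\xi$ and whose $Y_R$-average equals $\xi\cdot\overline{A}_{T,R}\xi$. Choosing $p=2$ (so that $1/p'=1/2$) in that theorem yields
\[
|\xi\cdot(\overline{A}_{T,R}-A^*_T)\xi|\lesssim \sup_{y\in\mathbb{R}^d}\inf_{|z|\leq L}\dashint_{Y_R}|u(t+y)-u(t+z)|\,dt + \left(\frac{L}{R}\right)^{1/2}\|u\|_{B^2},
\]
so it remains to bound the two summands on the right hand side by $L^{-\tau}$ and $1$, respectively.

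For the first summand, I would expand $u(t+y)-u(t+z)$ by multilinearity into pieces proportional either to $A(t+y)-A(t+z)$ or to $\nabla w_T^\xi(t+y)-\nabla w_T^\xi(t+z)$, each accompanied by a bounded factor involving $\xi$ and shifted copies of $\nabla w_T^\xi$. Applying Cauchy--Schwarz inside $\dashint_{Y_R}$ and using the a priori bound~\eqref{boundonpenalized} to control the $L^2(Y_R)$-norms of the shifted gradients, each piece is dominated by $\|A(\cdot+y)-A(\cdot+z)\|_{L^\infty(\mathbb{R}^d)}$; for the pieces carrying a gradient difference the key input is Theorem~\ref{ShenContainment}, which transfers the almost periodicity of $A$ to $\nabla w_T^\xi$. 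Taking the infimum over $|z|\leq L$ and then the supremum over $y$ produces exactly the modulus $\rho(A,L)$, which is $\lesssim L^{-\tau}$ by hypothesis.

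For the second summand, expanding the quadratic form and applying Cauchy--Schwarz gives
\[
\|u\|_{B^2}\lesssim |\xi|^2 + |\xi|\,\mathcal{M}(|\nabla w_T^\xi|^2)^{1/2} + \mathcal{M}(|\nabla w_T^\xi|^4)^{1/2}.
\]
The first two summands are bounded uniformly in $T$ and $R$ by~\eqref{boundonpenalized}. The last term requires a $B^4$-type bound on $\nabla w_T^\xi$, which is the main technical obstacle: one needs a Meyers-type reverse H\"older estimate for the singularly perturbed equation~\eqref{cell2}, uniform in $T$. In the Besicovitch setting I would establish it either through local reverse H\"older inequalities on cubes of growing size applied to the $H^1_{\loc}(\mathbb{R}^d)$ representative of $w_T^\xi$ and then pass to the mean value, or else work directly with the derivation calculus on $B^{1,p}(\mathbb{R}^d)$ of Casado-D\'{\i}az and Gayte. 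Once $\|u\|_{B^2}\lesssim 1$ is secured, combining both bounds delivers~\eqref{estimate2}.
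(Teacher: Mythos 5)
Your overall strategy (Shen--Zhuge ergodic theorem plus Shen's Lemma, Theorem~\ref{ShenContainment}) is the same as the paper's, but you apply Theorem~\ref{ergodictheorem} to the \emph{quadratic} energy density $u=(\xi+\nabla w_T^\xi)\cdot A(\xi+\nabla w_T^\xi)$, and this is where a genuine gap appears. With $p=2$ the theorem requires $\|u\|_{B^2}\lesssim 1$, i.e.\ a bound on $\mathcal{M}(|\nabla w_T^\xi|^4)$ uniform in $T$, and you yourself flag this as the main obstacle but do not prove it. The route you suggest does not close it: a Meyers-type reverse H\"older inequality for the singularly perturbed equation~\eqref{cell2} with merely bounded, measurable, coercive $A$ yields $L^{2+\delta}$ integrability only for a small $\delta>0$ depending on the ellipticity contrast, not $L^4$; and if you instead lower $p$ to match the integrability actually available (i.e.\ $u\in B^{1+\delta/2}$), the ergodic theorem returns the exponent $(L/R)^{1/p'}$ with $1/p'<1/2$, so the claimed estimate~\eqref{estimate2} with the exponent $1/2$ no longer follows. (Your treatment of the first summand also quietly needs uniform control of the shifted averages $\dashint_{Y_R}|\nabla w_T^\xi(t+y)|^2\,dt$, which is not literally the full-space bound~\eqref{boundonpenalized}, though this is a smaller issue.)

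The paper avoids the quartic integrability problem entirely by applying the ergodic theorem not to the energy density but to the two quantities $u_1=e_k\cdot Ae_l$ (with $p=\infty$) and $u_2=e_k\cdot A\nabla w_T^{e_l}$ (with $p=2$), i.e.\ to the flux representation of the coefficients, which is \emph{linear} in the corrector gradient. For such $u_2$ the $B^2$-norm is controlled directly by the energy estimate~\eqref{boundonpenalized}, and the almost periodicity term is handled by splitting
\begin{align*}
(A\nabla w_T^{e_l})(t+y)-(A\nabla w_T^{e_l})(t+z)
&=\bigl(A(t+y)-A(t+z)\bigr)\nabla w_T^{e_l}(t+y)\\
&\qquad+A(t+z)\bigl(\nabla w_T^{e_l}(t+y)-\nabla w_T^{e_l}(t+z)\bigr),
\end{align*}
then using Cauchy--Schwarz and Theorem~\ref{ShenContainment}, which is exactly the mechanism you intended but applied to a function for which $p=2$ is legitimate. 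If you either adopt this linear (flux) form, or genuinely establish a $T$-uniform $B^4$ bound on $\nabla w_T^\xi$ (which for continuous almost periodic $A$ requires uniform local $W^{1,4}$ estimates, an ingredient well beyond Meyers and absent from your sketch), your argument would be complete; as written, it is not.
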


\begin{proof}
	We shall apply Theorem~\ref{ergodictheorem} to the functions $u_1=e_k\cdot Ae_l$ and $u_2=e_k\cdot A\nabla w_{T}^{e_l}$. For $u_1$, we may choose $p=\infty$ to obtain the following estimate.
	\begin{align}\label{something1}|\mathcal{M}(A)-\mathcal{M}_{Y_R}(A)|\lesssim \rho(A,L)+\frac{L}{R}\lesssim\frac{1}{L^\tau}+\frac{L}{R}.\end{align}	
	For $u_2$, we may choose $p=2$. By Theorem~\ref{ergodictheorem}, we have 
	\begin{align}\label{something2}|
	\mathcal{M}(A\nabla w_{T}^{e_l})-\mathcal{M}_{Y_R}(A\nabla w_{T}^{e_l})|&\lesssim \sup_{y\in\mathbb{R}^d}\inf_{|z|\leq L}\dashint_{Y_{R}}|(A\nabla w_{T}^{e_l})(t+y)-(A\nabla w_{T}^{e_l})(t+z)|~dt\nonumber\\
	&\qquad+\left(\frac{L}{R}\right)^{1/2}||u||_{B^2}
	\end{align}
	Through an application of Theorem~\ref{ShenContainment}, we note that
	\begin{align}\label{something3}
	\dashint_{Y_{R}}|(A\nabla w_{T}^{e_l})(t+y)-(A\nabla w_{T}^{e_l})(t+z)|~dt&\leq C||A(\cdot+y)-A(\cdot+z)||_{L^\infty(\mathbb{R}^d)}.
	\end{align}
	Combining~\eqref{something2} and~\eqref{something3}, we get
	\begin{align}\label{something4}|\mathcal{M}(A\nabla w_{T}^{e_l})-\mathcal{M}_{Y_R}(A\nabla w_{T}^{e_l})|&\lesssim \sup_{y\in\mathbb{R}^d}\inf_{|z|\leq L}||A(\cdot+y)-A(\cdot+z)||_{L^\infty(\mathbb{R}^d)}\nonumber\\
    &\qquad+\left(\frac{L}{R}\right)^{1/2}||u||_{B^2}\nonumber\\
    &\lesssim\frac{1}{L^\tau}+\left(\frac{L}{R}\right)^{1/2}.
\end{align}
	Combining~\eqref{something1} and~\eqref{something4}, we get~\eqref{estimate2}.
\end{proof}

\subsection{Rate of convergence of boundary term}

Now, we shall prove estimate on the boundary term, viz., $|\overline{A}_{T,R}-A^{R,*}_{T}|$. The proof is essentially the same as in~\cite{BourgeatPiatnitski2004}, although the Green's function estimates are borrowed from~\cite{GloriaOtto2017}. We begin by recalling the existence of Green's function associated with the operator $T^{-1}-\nabla\cdot(A\nabla)$ and its pointwise bounds.

\begin{theorem}(Gloria \& Otto~\cite{GloriaOtto2017})\label{Greenestimates}
	Let $A$ be a coercive matrix with measurable and bounded entries, and let $T>0$. Then for all $y\in\mathbb{R}^d$, there is a function $G_T(\cdot,y)$ which is the unique solution in $W^{1,1}(\mathbb{R}^d)$ of the equation\begin{align}
	T^{-1}G_T(x,y)-\nabla_x\cdot(A\nabla_xG_T(x,y))=\delta(x-y),
	\end{align}in the sense of distributions. The function $G_T(\cdot,y)$ is continuous on $\mathbb{R}^d\setminus\{y\}$. Furthermore, the Green's function satisfies the following pointwise bounds:
	\begin{align}
	0\leq G_T(x,y)\lesssim\exp\left(-c\frac{|x-y|}{\sqrt{T}}\right)\begin{cases}
	\ln\left(2+\frac{\sqrt{T}}{|x-y|}\right)& \text{if } d=2\\
	{|x-y|^{2-d}},              & \text{if } d>2
	\end{cases}.
	\end{align}
\end{theorem}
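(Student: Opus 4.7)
The plan is to establish existence, then the local pointwise bound, then graft on the exponential decay coming from the massive term $T^{-1}$. Since $T^{-1}$ provides an $L^2$-coercive lower-order term, the bilinear form
\[
a(u,v)=\int_{\mathbb{R}^d}\bigl(T^{-1}uv+A\nabla u\cdot\nabla v\bigr)\,dx
\]
is coercive on $H^1(\mathbb{R}^d)$ without any Poincaré inequality, so Lax-Milgram gives a unique solution $u\in H^1(\mathbb{R}^d)$ for every right-hand side in $H^{-1}(\mathbb{R}^d)$. I would first construct $G_T(\cdot,y)$ by solving the problems with $f=\rho_\eta(\cdot-y)$, where $\rho_\eta$ is a standard mollifier, and pass $\eta\to 0$; the $W^{1,1}$ convergence and uniqueness in that class then come from the De Giorgi--Nash--Moser local regularity together with the uniform energy bound weighted by $|x-y|^{2-d}$.

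For the algebraic (unweighted) pointwise bound, I would follow the Littman--Stampacchia--Weinberger scheme: for $r=|x-y|/4$ and $x_0=x$, test the equation for $G_T(\cdot,y)$ against $G_T(\cdot,y)\eta^2$, where $\eta$ is a standard cutoff supported in $B_{2r}(x_0)$ and equal to $1$ on $B_r(x_0)$; the resulting Caccioppoli inequality together with the Moser $L^\infty$--$L^2$ bound gives
\[
\sup_{B_r(x_0)}G_T(\cdot,y)\lesssim r^{-d/2}\|G_T(\cdot,y)\|_{L^2(B_{2r}(x_0))}\lesssim r^{2-d},
\]
and in $d=2$ the same machinery furnishes the $\log(2+\sqrt T/r)$ bound by iterating on dyadic annuli until one reaches radius $\sqrt T$, beyond which the massive term dominates.

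The crucial and most delicate step is the exponential decay, for which I would use an Agmon-type weighted energy estimate. Fix $y$ and set $\phi(x)=c\,|x-y|/\sqrt T$ with $c>0$ a small constant depending only on ellipticity and on the $L^\infty$-norm of $A$. Testing the distributional equation for $G_T(\cdot,y)$ against $e^{2\phi}G_T(\cdot,y)\chi_{\mathbb{R}^d\setminus B_r(y)}$ (after smoothing), the bound $|\nabla\phi|\leq c/\sqrt T$ lets ellipticity absorb the commutator term, yielding
\[
\int_{\mathbb{R}^d\setminus B_r(y)}e^{2\phi}\Bigl(T^{-1}|G_T|^2+|\nabla G_T|^2\Bigr)\,dx\lesssim e^{c\,r/\sqrt T}\|G_T\|_{L^2(\partial B_r)}^2,
\]
which converts into a pointwise exponential decay via another application of Moser's $L^\infty$--$L^2$ estimate on a ball of radius $\sim|x-y|/2$ around $x$. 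Combining the local algebraic bound near the pole with this exponential decay away from it produces the claimed inequality. The main obstacle I foresee is making the Agmon argument robust to the low regularity of $A$ (only measurable, bounded, coercive); one has to be careful that the commutator $[A\nabla,e^\phi]$ is handled purely through $|\nabla\phi|^2$ and not through derivatives of $A$, and to truncate $\phi$ near the singularity of $|x-y|$ so that $\phi$ is smooth enough to differentiate.
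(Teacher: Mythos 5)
There is nothing in the paper to compare against here: Theorem~\ref{Greenestimates} is imported verbatim from Gloria and Otto and is stated without proof, so the only question is whether your blind sketch is a viable route to the result. It is, and it is in fact the standard one: Lax--Milgram for the massive form on $H^1(\mathbb{R}^d)$ applied to mollified data, Gr\"uter--Widman/Littman--Stampacchia--Weinberger-type estimates to pass to the limit and get the local bound $\sup_{B_r(x)}G_T\lesssim r^{2-d}$ (with the logarithm in $d=2$, where the massive term is what saves integrability), and an Agmon weighted energy estimate with $\phi(x)=c|x-y|/\sqrt{T}$, $|\nabla\phi|\le c/\sqrt{T}$, to produce the exponential factor without ever differentiating $A$. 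Two points in your sketch are thinner than the rest and deserve explicit care if you were to write this out. First, existence of the limit in $W^{1,1}$ and uniqueness in that class are not consequences of De Giorgi--Nash--Moser alone; you need the uniform weak-$L^{d/(d-2)}$ and weak-$L^{d/(d-1)}$ bounds on $u_\eta$ and $\nabla u_\eta$ (the Gr\"uter--Widman argument), and uniqueness requires a duality argument testing the homogeneous equation against solutions with smooth compactly supported data. Second, in the Agmon step the test function $e^{2\phi}G_T$ is not admissible as written: you must truncate the weight at infinity (e.g.\ $\phi_M=\min(\phi,M)$) and pass $M\to\infty$ after obtaining $M$-independent bounds, since a priori $e^{2\phi}G_T^2$ is not known to be integrable; and the sharp cutoff $\chi_{\mathbb{R}^d\setminus B_r(y)}$ produces a flux term on $\partial B_r$ rather than the $\|G_T\|_{L^2(\partial B_r)}^2$ you wrote, so one should instead use a smooth annular cutoff and control the resulting error by a Caccioppoli estimate on the annulus together with the already-established algebraic bound near the pole. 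With those repairs the outline matches the proofs in the literature that the paper is citing, so there is no gap of ideas, only sketch-level imprecision.
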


\begin{theorem}\label{convergence3}Let $0<\delta<1$, $|\overline{A}_{T,R}-A^{R,*}_{T}|\lesssim R^{(\delta-1)/2}+ \exp\left(-c\frac{R^\delta}{\sqrt{T}}\right)\begin{cases}
	R^d&d>2\\
	R^3&d=2.
	\end{cases}$
\end{theorem}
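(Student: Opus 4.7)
The strategy is to compare the two regularized correctors $w_T^\xi$ and $w_T^{R,\xi}$ directly, exploiting that $A = A^R$ pointwise on $Y_R$. Consequently, both correctors solve the same equation $T^{-1}w - \nabla\cdot(A(\xi + \nabla w)) = 0$ on $Y_R$; only their behavior outside $Y_R$ differs, with $w_T^\xi$ living on $\mathbb{R}^d$ and $w_T^{R,\xi}$ being $Y_R$-periodic. Setting $v := w_T^\xi - w_T^{R,\xi}$, it follows that $T^{-1}v - \nabla\cdot(A\nabla v) = 0$ in $Y_R$, and by the symmetry of $A$ the tensor difference admits the identity
\begin{equation*}
(\overline{A}_{T,R} - A^{R,*}_T)\xi\cdot\xi = \frac{1}{|Y_R|}\int_{Y_R}\nabla v\cdot A(2\xi + \nabla w_T^\xi + \nabla w_T^{R,\xi})\,dy.
\end{equation*}

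To extract the two terms in the claim I would introduce a smooth cutoff $\eta$ with $\eta\equiv 1$ on $Y_{R-2R^\delta}$, $\mathrm{supp}(\eta)\subset Y_{R-R^\delta}$, and $|\nabla\eta|\lesssim R^{-\delta}$, and split $1 = \eta + (1-\eta)$. The boundary-layer piece, supported in a strip of volume $\lesssim R^{d-1+\delta}$, is controlled by Cauchy--Schwarz, using $\|\nabla w_T^\xi\|_{L^2(Y_R)} + \|\nabla w_T^{R,\xi}\|_{L^2(Y_R)}\lesssim R^{d/2}$, which follows from~\eqref{boundonpenalized} and its periodic analogue applied to (PT). This yields a term of order $R^{(\delta-1)/2}$, the first part of the estimate.

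For the interior piece, I would integrate by parts against the weak forms of (APT) and (PT), both tested with $v\eta^2$ (a valid test function for either problem, being compactly supported in $Y_R$), converting $\int_{Y_R}\eta^2\nabla v\cdot A(2\xi + \nabla w_T^\xi + \nabla w_T^{R,\xi})\,dy$ into $-T^{-1}\int_{Y_R}(w_T^\xi + w_T^{R,\xi})v\eta^2\,dy$ plus a further boundary-layer correction of the same type as before. The crucial step is then to obtain a pointwise bound on $v$ in the interior that exhibits the $e^{-cR^\delta/\sqrt{T}}$ decay. Setting $u := v\eta$ and extending by zero, a direct computation shows that
\begin{equation*}
T^{-1}u - \nabla\cdot(A\nabla u) = -2A\nabla v\cdot\nabla\eta - v\,\nabla\cdot(A\nabla\eta) \quad\text{in } \mathbb{R}^d,
\end{equation*}
with source $f$ supported in the thin annulus $Y_{R-R^\delta}\setminus Y_{R-2R^\delta}$. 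The Green's function representation of Theorem~\ref{Greenestimates} then gives $v(x) = \int G_T(x,y)\,f(y)\,dy$ for $x\in Y_{R-2R^\delta}$; taking $x$ in the still deeper interior $Y_{R-3R^\delta}$ guarantees $|x-y|\geq R^\delta$ throughout the strip, and the exponential bound of Theorem~\ref{Greenestimates} extracts the factor $e^{-cR^\delta/\sqrt{T}}$. Integrating the non-exponential part of $G_T$ against $f$ (controlled in $L^1$ by $L^2$-bounds on $v$ and $\nabla v$ in the strip, together with the pointwise bound $|f|\lesssim R^{-\delta}|\nabla v| + R^{-2\delta}|v|$) then produces the dimension-dependent polynomial prefactor $R^d$ for $d>2$ and $R^3$ for $d=2$.

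The main obstacle is the careful tracking of $R$- and $T$-powers through the Green's function convolution, and in particular the dimensional asymmetry: the $|x-y|^{2-d}$ decay of $G_T$ for $d>2$ integrates cleanly against the strip, whereas for $d=2$ the logarithmic factor $\ln(2+\sqrt{T}/|x-y|)$ requires separate treatment and accounts for the suboptimal exponent $3$ in place of $d$. A secondary concern is ensuring that the residual boundary-layer term generated by the integration by parts against $\nabla\eta$ merges with the original strip contribution without degrading the $R^{(\delta-1)/2}$ rate.
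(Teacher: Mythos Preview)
Your strategy is in the same spirit as the paper's---split into a boundary strip and an interior piece, and use the exponential decay of the Green's function of $T^{-1}-\nabla\cdot(A\nabla\,\cdot\,)$ to control the interior---but the implementation differs in two places, and in each your version has a gap.

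\textbf{Boundary layer.} You write the tensor difference in the quadratic (energy) form
\[
(\overline{A}_{T,R}-A^{R,*}_T)\xi\cdot\xi=\frac{1}{|Y_R|}\int_{Y_R}\nabla v\cdot A(2\xi+\nabla w_T^\xi+\nabla w_T^{R,\xi})\,dy,
\]
and then try to bound the strip contribution by Cauchy--Schwarz using only $\|\nabla w_T^\xi\|_{L^2(Y_R)}\lesssim R^{d/2}$. This does not yield $R^{(\delta-1)/2}$: the cross term $\nabla v\cdot A\nabla w_T^\xi$ gives only $R^{-d}\cdot R^{d/2}\cdot R^{d/2}=O(1)$, because nothing forces the $L^2$ mass of $\nabla w_T^\xi$ to concentrate away from the strip. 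The paper avoids this by using instead the \emph{polarized} formula
\[
e_k\cdot(\overline{A}_{T,R}-A^{R,*}_T)e_l=\dashint_{Y_R}e_k\cdot A\nabla(w^l_T-w^{R,l}_T)\,dx,
\]
in which the constant vector $e_k$ (rather than a corrector gradient) is paired with $\nabla v$; then Cauchy--Schwarz against the indicator of the strip of volume $\sim R^{d-1+\delta}$ gives $R^{(\delta-1)/2}$ directly. Your route can be salvaged, but only by invoking a \emph{uniform local} bound $\sup_x\dashint_{B_r(x)}|\nabla w_T^\xi|^2\lesssim 1$ (available e.g.\ from Shen's Caccioppoli-type estimates), which you do not cite.

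\textbf{Interior source term.} You write the equation for $u=v\eta$ with right-hand side $f=-2A\nabla v\cdot\nabla\eta-v\,\nabla\cdot(A\nabla\eta)$ and then bound $|f|\lesssim R^{-\delta}|\nabla v|+R^{-2\delta}|v|$ pointwise. The second term requires $\nabla\cdot(A\nabla\eta)$ to be a bounded function, i.e.\ a derivative of $A$, which is not assumed (only $A\in L^\infty$). The paper keeps the divergence structure: it writes the analogue of your $u$ (its $\phi_2$) via the Dirichlet Green's function $G_{T,R}$ on $Y_R$, represents the divergence part as $\int A\nabla\phi_1\cdot\nabla_y G_{T,R}$, and then bounds $\nabla_y G_{T,R}$ in the strip by a Caccioppoli inequality in terms of $G_{T,R}$ itself; the maximum principle $0\le G_{T,R}\le G_T$ then feeds in the pointwise decay of Theorem~\ref{Greenestimates}. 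Your extension-to-$\mathbb{R}^d$ approach is a legitimate alternative to the Dirichlet-Green-function route, but you must likewise keep the divergence form and obtain $\nabla G_T$ control via Caccioppoli rather than differentiating $A$.
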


\begin{proof}
	Let $R\geq R_0>0$ and $\delta\geq \delta_0>0$. The proof will be done in three steps: first to obtain an interior estimate in $Y_{R-R^\delta}$, second to obtain an estimate for the boundary layer $Y_R\setminus Y_{R-R^\delta}$ and the final step to obtain the required convergence rate.
	
	{\bf Step 1.} $ w_{T}^{\xi}$ satisfies the following equation in $\mathbb{R}^d$:
	\begin{align*}
	-\nabla\cdot(A (\xi+\nabla w_{T}^{\xi}))+T^{-1}w_{T}^{\xi}=0.
	\end{align*}
	$ w^{R,\xi}_{T}$ satisfies the following equation in $Y_R$:
	\begin{align*}
	-\nabla\cdot(A (\xi+\nabla w_{T}^{R,\xi}))+T^{-1}w_{T}^{R,\xi}=0.
	\end{align*}
	Hence, their difference satisfies
	\begin{align*}
	T^{-1}(w_{T}^{\xi}-w_{T}^{R,\xi})-\nabla\cdot(A \nabla (w_{T}^{\xi}-w_{T}^{R,\xi}))=0 \mbox{  in } Y_{R}
	\end{align*} in the sense of distributions.
	Set $\phi_1=\chi (w_{T}^{\xi}-w_{T}^{\xi,R})$, where $\chi\in C^{\infty}(\overline{Y_R};\mathbb{R}^+)$, so that $\chi|_{\partial Y_R}=1$, $\chi|_{Y_{R-R^\delta/2}}=0$ and $|\nabla\chi|\lesssim 1/R$. 
	
	Therefore, by the bounds~\eqref{boundonpenalized}, $||\phi_1||^2_{L^2(Y_R)}\lesssim R^dT$ and $||\nabla\phi_1||^2_{L^2(Y_R)}\lesssim R^d$ for $R\lesssim T\lesssim R^2$.
	
	Now, define $\phi_2=w_{T}^{\xi}-w_{T}^{R,\xi}-\phi_1$, then $\phi_2$ satisfies the following equation:
	\begin{align*}
	T^{-1}\phi_2-\nabla\cdot A\nabla\phi_2=-T^{-1}\phi_1+\nabla\cdot A\nabla\phi_1\mbox{ in }Y_R\\
	\phi_2=0\mbox{ on }\partial Y_R.
	\end{align*}
	Hence, we may write 
	\begin{align}
	\phi_2(x)=-\int_{Y_R}T^{-1}\phi_1(y)G_{T,R}(x,y)+A(y)\nabla\phi_1(y)\cdot\nabla G_{T,R}(x,y)~dy,
	\end{align}where $G_{T,R}$ is the Green's function for the operator $T^{-1}-\nabla\cdot A\nabla$ on $Y_R$ with zero Dirichlet boundary conditions, i.e.,
	\begin{align}\label{GreenR}
	T^{-1}G_{T,R}(x,y)-\nabla_x\cdot(A\nabla_xG_{T,R}(x,y))&=\delta(x-y)&\mbox{ in } Y_R\nonumber\\
	G_{T,R}(x,y)&=0&\mbox{ on } \partial Y_R
	\end{align}in the sense of distributions.
	Therefore,
	\begin{align*}
	|\phi_2(x)|&\leq||\phi_1||_{L^2(Y_R)}\left(T^{-1}\int_{Y_R\setminus Y_{R-R^\delta/2}}G_{T,R}^2(x,y)~dy\right)^{1/2}\\
	&\qquad+||A||_{L^\infty}||\nabla\phi_1||_{L^2(Y_R)}\left(\int_{Y_R\setminus Y_{R-R^\delta/2}}|\nabla G_{T,R}(x,y)|^2~dy\right)^{1/2}.
	\end{align*}
	In the above inequality, the second term will be handled by using Caccioppoli's inequality. In particular, let us multiply the equation~\eqref{GreenR} for Green's function $G_{T,R}$ by $\eta^2 G_{T,R}$ (where $\eta$ is to be chosen later) and integrate by parts to obtain:
	\begin{align*}
	0&=T^{-1}\int_{Y_R}\eta^2(y)G_{T,R}^2(x,y)~dy+\int_{Y_R}A(y)\nabla(\eta^2(y)G_{T,R}(x,y))\cdot\nabla G_{T,R}(x,y)~dy\\
	&=T^{-1}\int_{Y_R}\eta^2(y)G_{T,R}^2(x,y)~dy+\int_{Y_R}A(y)\nabla(\eta(y)G_{T,R}(x,y))\cdot\nabla(\eta(y)G_{T,R}(x,y))~dy\\
	&\qquad-\int_{Y_R}G_{T,R}^2(x,y)A(y)\nabla\eta(y)\cdot\nabla\eta(y),
	\end{align*}given that $\eta$ is zero in some neighborhood of $0$. From the last equality, we obtain
	\begin{align*}
	\int_{Y_R}|\nabla(\eta~G_{T,R})|^2~dy\lesssim\int_{Y_R}G_{T,R}^2|\nabla\eta|^2~dy.
	\end{align*}
	Choose the function $\eta\in C^\infty(Y_R,\mathbb{R}_+)$, such that 
	\begin{align}
      \begin{cases}
      \eta=0&\mbox{ in }Y_{R-3R^\delta/4},\\
      \eta=1&\mbox{ in }Y_R\setminus Y_{R-R^\delta/2},\\
      |\nabla\eta|\lesssim 1/R&,\\
      \end{cases}
	\end{align}
	 then the preceding inequality becomes 
	\begin{align*}
	\int_{Y_R\setminus Y_{R-R^\delta/2}}|\nabla G_{T,R}|^2~dy\lesssim\frac{1}{R^2}\int_{Y_R\setminus Y_{R-3R^\delta/4}}G_{T,R}^2~dy.
	\end{align*}Therefore, for all $x\in Y_R$, we have
	\begin{align*}
	|\phi_2(x)|&\lesssim||\phi_1||_{L^2(Y_R)}\left(T^{-1}\int_{Y_R\setminus Y_{R-R^\delta/2}}G_{T,R}^2(x,y)~dy\right)^{1/2}
	\\&\qquad+||\nabla\phi_1||_{L^2(Y_R)}\left(\int_{Y_R\setminus Y_{R-3R^\delta/4}}R^{-2}G^2_{T,R}(x,y)~dy\right)^{1/2}.
	\end{align*}
	For $x\in Y_{R-5R^\delta/6}$, and $y\in Y_R\setminus Y_{R-R^\delta/2}$, we have $||x-y||_\infty\geq||y||_\infty-||x||_\infty\geq R-R^\delta/2-R+5R^\delta/6=R^\delta/3$. Therefore, $|x-y|\gtrsim R^\delta$ Further, note that due to maximum principle, $0\leq G_{T,R}\leq G_T$. Hence, on using the pointwise estimate for $G_T$ (Theorem~\ref{Greenestimates}), the above inequality becomes for $d>2$ and for $x\in Y_{R-5R^\delta/6}$:
	\begin{align*}
	|\phi_2(x)|&\lesssim||\phi_1||_{L^2(Y_R)}\left(T^{-1}\int_{Y_R\setminus Y_{R-R^\delta/2}}G_{T}^2(x,y)~dy\right)^{1/2}\\
	&\qquad+||\nabla\phi_1||_{L^2(Y_R)}\left(\int_{Y_R\setminus Y_{R-3R^\delta/4}}R^{-2}G_{T}^2(x,y)~dy\right)^{1/2}\\
	&\lesssim R^{d/2}R^{2\delta-d\delta}\exp\left(-c\frac{R^\delta}{\sqrt{T}}\right)R^{d/2}+R^{d/2}R^{-1}R^{2\delta-d\delta}\exp\left(-c\frac{R^\delta}{\sqrt{T}}\right)R^{d/2}\\
	&\lesssim R^{d-2\delta+d\delta}\exp\left(-c\frac{R^\delta}{\sqrt{T}}\right).
	\end{align*} in the regime $R\lesssim T\lesssim R^2$. Similar calculations provide the estimate for $d=2$.
	
	Hence, 
	\begin{align*}
	\left(\int_{Y_{R-5R^\delta/6}}|\phi_2(x)|^2~dx\right)^{1/2}\lesssim {R}^{d+2\delta-d\delta} R^{d/2}\exp\left(-c\frac{R^\delta}{\sqrt{T}}\right). 
	\end{align*}
	
	Finally, by an application of Caccioppoli's inequality, we have
	\begin{align*}
	\left(\int_{Y_{R-R^\delta}}|\nabla \phi_2(x)|^2~dx\right)^{1/2}\lesssim R^{d/2}R^{d+3\delta-d\delta}\exp\left(-c\frac{R^\delta}{\sqrt{T}}\right). 
	\end{align*}
	Therefore,
	\begin{align*}
	\left(\int_{Y_{R-R^\delta}}|\nabla(w_{T}^{\xi}(x)-w_{T}^{R,\xi}(x))|^2~dx\right)^{1/2}\lesssim R^{d/2}R^{d+3\delta-d\delta}\exp\left(-c\frac{R^\delta}{\sqrt{T}}\right).
	\end{align*} Thus,
	\begin{align}\label{interiorestimate}
    \left(\frac{1}{R^d}\int_{Y_{R-R^\delta}}|\nabla(w_{T}^{\xi}(x)-w_{T}^{R,\xi}(x))|^2~dx\right)^{1/2}\lesssim R^{d+3\delta-d\delta}\exp\left(-c\frac{R^\delta}{\sqrt{T}}\right). 
	\end{align}
	
	{\bf Step 2}
	Let $\xi=e_l$ and denote the solutions of equations~\eqref{cell2} and~\eqref{cell3} as $w_T^l$ and $w_{T}^{R,l}$. For $x\in Y_1$, define the functions
	\begin{align*}
	\tilde{w}^l_T(x)&=\frac{1}{R}w^l_T(Rx)\\
	\tilde{w}^{R,l}_{T}(x)&=\frac{1}{R}w^{R,l}_{T}(Rx).
	\end{align*}
	Then these functions satisfy respectively the following equations in $Y_1$:
	\begin{align*}
	-\nabla\cdot(A\nabla \tilde{w}^l_T)+R^2T^{-1}\tilde{w}^l_T&=\nabla Ae_l,\\
	-\nabla\cdot(A\nabla \tilde{w}^{R,l}_{T})+R^2T^{-1}\tilde{w}^{R,l}_{T}&=\nabla Ae_l.
	\end{align*}
	Also,
	\begin{align}\label{boundaryestimate}
	\begin{rcases}
	\int_{Y_1}|\nabla\tilde{w}^l_{T}(x)|^2~dx&\lesssim\frac{1}{R^d}\int_{Y_R}|\nabla{w}^l_{T}(x)|^2~dx\lesssim C,\\
	\int_{Y_1}|\nabla\tilde{w}^{R,l}_{T}(x)|^2~dx&\lesssim\frac{1}{R^d}\int_{Y_R}|\nabla{w}^{R,l}_{T}(x)|^2~dx\lesssim C,
	\end{rcases}
	\end{align} where $C$ is a generic constant.
	Now, we can obtain the required estimates.
	
	{\bf Step 3.} On using~\eqref{interiorestimate} and~\eqref{boundaryestimate}, we have
	\begingroup
		\allowdisplaybreaks
	\begin{align*}
	|&e_k\cdot(\overline{A}_{T,R}-A^{R,*}_{T})e_l|\\
	&=\left|\dashint_{Y_R} e_k\cdot A\nabla({w}^l_{T}-{w}^{R,l}_{T})~dx \right|\\
	&\lesssim\left|\frac{1}{R^d}\int_{Y_{R-R^{\delta}}}\hspace{-0.85cm}e_k\cdot A\nabla({w}^l_{T}-{w}^{R,l}_{T})~dx \right|+\left|\frac{1}{R^d}\int_{Y_R\setminus Y_{R-R^{\delta}}}\hspace{-0.95cm}e_k\cdot A\nabla{w}^l_{T}~dx \right|+\left|\frac{1}{R^d}\int_{Y_R\setminus Y_{R-R^{\delta}}}\hspace{-0.95cm}e_k\cdot A\nabla{w}^{R,l}_{T}~dx \right|\\
	&\lesssim\left|\frac{1}{R^d}\int_{Y_{R-R^{\delta}}}\hspace{-0.85cm}e_k\cdot A\nabla({w}^l_{T}-{w}^{R,l}_{T})~dx \right|+\left|\int_{Y_1\setminus Y_{1-R^{\delta-1}}}\hspace{-0.85cm}e_k\cdot A\nabla\tilde{w}^l_{T}~dx \right|+\left|\int_{Y_1\setminus Y_{1-R^{\delta-1}}}\hspace{-0.85cm}e_k\cdot A\nabla\tilde{w}^{R,l}_{T}~dx \right|\\
	&\lesssim\left(\frac{1}{R^d}\int_{Y_{R-R^{\delta}}}\hspace{-0.5cm} |\nabla({w}^l_{T}-{w}^{R,l}_{T})|^2~dx \right)^{1/2}+\left(\int_{Y_1\setminus Y_{1-R^{\delta-1}}}\hspace{-0.5cm}|\nabla\tilde{w}^l_{T}|~dx \right)+\left(\int_{Y_1\setminus Y_{1-R^{\delta-1}}}\hspace{-0.5cm}|\nabla\tilde{w}^{R,l}_{T}|~dx \right)\\
	&\lesssim\left(\frac{1}{R^d}\int_{Y_{R-R^{\delta}}} |\nabla({w}^l_{T}-{w}^{R,l}_{T})|^2~dx \right)^{1/2}+\left(\int_{Y_1\setminus Y_{1-R^{\delta-1}}} |\nabla\tilde{w}^l_{T}|^2~dx\int_{Y_1\setminus Y_{1-R^{\delta-1}}} 1~dx \right)^{1/2}\\
	&\qquad\qquad+\left(\int_{Y_1\setminus Y_{1-R^{\delta-1}}} |\nabla\tilde{w}^{R,l}_{T}|^2~dx\int_{Y_1\setminus Y_{1-R^{\delta-1}}} 1~dx \right)^{1/2}\\
	&\lesssim R^{d-3\delta+d\delta}\exp\left(-c\frac{R^\delta}{\sqrt{T}}\right)+R^{(\delta-1)/2}.
	\end{align*}
	\endgroup
\end{proof}

\subsection{Rate of convergence of regularized Periodic correctors}\label{lastpart}
In this subsection, the convergence rate for the last term in~\eqref{fourparts} is established.

\begin{theorem}\label{convergence4} Let $\rho(A,L)$ satisfy $\rho(A,L)\lesssim 1/L^\tau$ for some $\tau>0$. Then for any $0<\gamma<\frac{\tau}{\tau+1}$, 
	\begin{align}|A^{R,*}-A^{R,*}_{T}|\leq C_\gamma R^{4-2\gamma} T^{-2}.\end{align}
\end{theorem}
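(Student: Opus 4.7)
The plan is to reduce the estimate to an energy bound on the difference $u := w^{R,\xi} - w^{R,\xi}_T$. First I would fix $w^{R,\xi}$ to be the mean-zero representative on $Y_R$; the regularized corrector $w^{R,\xi}_T$ is automatically mean-zero, since integrating~\eqref{cell3} over $Y_R$ with periodic boundary conditions yields $T^{-1}\int_{Y_R} w^{R,\xi}_T=0$, and hence $u$ is mean-zero as well. Testing the two cell problems~\eqref{cell4} and~\eqref{cell3} against $u$, exploiting the symmetry of $A$ to cancel the cross terms, and using the standard energy representations $\xi\cdot A^{R,*}\xi = \dashint_{Y_R}(\xi+\nabla w^{R,\xi})\cdot A(\xi+\nabla w^{R,\xi})\,dy$ together with the analogous one for $A^{R,*}_T$, a short computation yields the key identity
\begin{equation*}
\xi\cdot(A^{R,*}-A^{R,*}_T)\xi = -\dashint_{Y_R} A\nabla u\cdot\nabla u\,dy.
\end{equation*}

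Subtracting~\eqref{cell3} from~\eqref{cell4} and using $w^{R,\xi}_T = w^{R,\xi}-u$ shows that $u$ satisfies
\begin{equation*}
-\dive(A\nabla u)+T^{-1}u = T^{-1}w^{R,\xi}\qquad\text{in }Y_R
\end{equation*}
with periodic boundary conditions. Testing this equation against $u$, applying the Poincar\'e inequality for mean-zero periodic functions $\|u\|_{L^2(Y_R)}\leq C_P R\|\nabla u\|_{L^2(Y_R)}$, and using Young's inequality with a small parameter on the right-hand side, the resulting gradient term on the right can be absorbed into the coercivity term $\alpha\|\nabla u\|_{L^2}^2$ on the left. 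Dropping the nonnegative contribution $T^{-1}\int u^2$ then yields
\begin{equation*}
\dashint_{Y_R}A\nabla u\cdot\nabla u\,dy \lesssim \frac{R^2}{T^2}\dashint_{Y_R}(w^{R,\xi})^2\,dy.
\end{equation*}

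To close the argument I would require the sublinear-growth bound $\dashint_{Y_R}(w^{R,\xi})^2\,dy\lesssim R^{2-2\gamma}$ for every $\gamma<\tau/(\tau+1)$. I would establish this by introducing an intermediate regularization $w^{R,\xi}_{T_0}$ at the scale $T_0\sim R^2$: the gradient estimate just proved combined with Poincar\'e gives $\dashint(w^{R,\xi}-w^{R,\xi}_{T_0})^2\lesssim (R^4/T_0^2)\dashint(w^{R,\xi})^2$, which for $T_0$ sufficiently large allows one to absorb the error into the left-hand side. Next, the interior comparison from Theorem~\ref{convergence3} replaces $w^{R,\xi}_{T_0}$ by the regularized almost-periodic corrector $w^\xi_{T_0}$ on $Y_{R-R^\delta}$ at the cost of a small boundary defect, the Shen--Zhuge ergodic Theorem~\ref{ergodictheorem} together with the module-containment Theorem~\ref{ShenContainment} transfers the Besicovitch mean $\mathcal{M}(|w^\xi_{T_0}|^2)$ to the cell average on $Y_R$ with an error controlled by $\rho(A,\cdot)$, and Shen's quantitative estimates for the regularized almost-periodic corrector under $\rho(A,L)\lesssim L^{-\tau}$ furnish $\mathcal{M}(|w^\xi_{T_0}|^2)\lesssim T_0^{1-\tau/(\tau+1)+\omega}$, matching $R^{2-2\gamma}$ at $T_0=R^2$.

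The main obstacle is this last step: extracting the sublinear bound on $\dashint_{Y_R}(w^{R,\xi})^2$ from $\rho(A,L)\lesssim L^{-\tau}$ is delicate, since the periodized coefficients $A^R$ have a sharp cell-boundary jump, so the argument must chain together the regularization comparison, the Green's-function boundary decay, and the ergodic theorem while carefully balancing $T_0$ against $R$. Once this is achieved, multiplying by the factor $R^2/T^2$ from the energy estimate immediately yields $|A^{R,*}-A^{R,*}_T|\lesssim R^{4-2\gamma}/T^2$.
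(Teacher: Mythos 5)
The first half of your argument coincides with the paper's proof: the energy identity $\xi\cdot(A^{R,*}_T-A^{R,*})\xi=\dashint_{Y_R}A\nabla u\cdot\nabla u\,dy$ with $u=w^{R,\xi}-w^{R,\xi}_T$, the equation $T^{-1}u-\dive(A\nabla u)=T^{-1}w^{R,\xi}$ in $Y_R$, and the test-function/Poincar\'e/absorption step leading to $|A^{R,*}-A^{R,*}_T|\lesssim R^2T^{-2}\dashint_{Y_R}|w^{R,\xi}|^2\,dy$ are exactly the paper's steps (the paper phrases them through $\psi^R_T=-T(w^{R,\xi}_T-w^{R,\xi})$). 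Both arguments therefore reduce to the same sublinear bound $\dashint_{Y_R}|w^{R,\xi}|^2\lesssim R^{2-2\gamma}$.

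The gap is in how you propose to prove that bound. Your chain requires an intermediate regularization scale $T_0$ satisfying two incompatible constraints. The absorption step $\dashint(w^{R,\xi}-w^{R,\xi}_{T_0})^2\lesssim (R^4/T_0^2)\dashint(w^{R,\xi})^2$ is only useful if $R^4/T_0^2$ is small, i.e.\ $T_0\gtrsim R^2$ with a large constant. But the interior comparison extracted from the proof of Theorem~\ref{convergence3} controls $w^{R,\xi}_{T_0}-w^{\xi}_{T_0}$ on $Y_{R-R^\delta}$ only through the factor $R^{\,d+O(\delta)}\exp\bigl(-cR^\delta/\sqrt{T_0}\bigr)$, which decays only when $\sqrt{T_0}\ll R^{\delta}$ with $\delta<1$; at $T_0\sim R^2$ the exponential is $O(1)$ and the polynomial prefactor makes the estimate vacuous. (This is precisely why the paper's final optimization takes $T=R^{2-\gamma'}$ and $\delta$ close to $1$.) A secondary problem: Theorem~\ref{ShenContainment} quantifies the almost periodicity of $\nabla w^{\xi}_{T}$, not of $w^{\xi}_{T}$ itself, so it cannot be fed into the Shen--Zhuge ergodic theorem applied to $|w^{\xi}_{T_0}|^2$ without an additional estimate. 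The paper avoids the regularized correctors entirely at this stage: it rescales, setting $\tilde w^{R,\xi}(x)=R^{-1}w^{R,\xi}(Rx)$ on $Y_1$, observes that $\tilde w^{R,\xi}$ solves a periodic-boundary homogenization problem with oscillation parameter $1/R$ whose homogenized limit is $0$ (by the zero-mean normalization and the arbitrary-solutions theorem of Jikov), and invokes a quantitative homogenization rate in the spirit of Shen under $\rho(A,L)\lesssim L^{-\tau}$ to get $\|\tilde w^{R,\xi}\|_{L^2(Y_1)}\leq C_\gamma R^{-\gamma}$ for any $\gamma<\tau/(\tau+1)$, which unscales to the needed bound. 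To repair your proof you would either have to reproduce such a rescaling/convergence-rate argument or find a comparison mechanism valid in the regime $T_0\gtrsim R^2$, which the Green's-function estimates used here do not provide.
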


\begin{proof}
	Observe that 
	\begin{align*}
	\xi\cdot A^{R,*}\xi=\frac{1}{|Y_R|}\int_{Y_R}(\xi+\nabla w^{R,\xi})\cdot A(\xi+\nabla w^{R,\xi})~dy,
	\end{align*} and
	\begin{align*}
	\xi\cdot A^{R,*}_{T}\xi=\frac{1}{|Y_R|}\int_{Y_R}(\xi+\nabla w^{R,\xi}_{T})\cdot A(\xi+\nabla w^{R,\xi}_{T})~dy,
	\end{align*} where $w^{R,\xi}$ solves~\eqref{cell4} and $w^{R,\xi}_{T}$ solves~\eqref{cell3}. Hence,
	\begin{align*}
	\xi\cdot&(A^{R,*}_{T}-A^{R,*})\xi
	\\&=\dashint_{Y_R}(\xi+\nabla w^{R,\xi}_{T})\cdot A(\xi+\nabla w^{R,\xi}_{T})-(\xi+\nabla w^{R,\xi})\cdot A(\xi+\nabla w^{R,\xi})~dy\\
	&=\dashint_{Y_R}(\xi+\nabla w^{R,\xi}_{T})\cdot A\nabla(w^{R,\xi}_{T}-w^{R,\xi})+\nabla(w^{R,\xi}_{T}- w^{R,\xi})\cdot A(\xi+\nabla w^{R,\xi})~dy\\
	&=\dashint_{Y_R}(\xi+\nabla w^{R,\xi}_{T})\cdot A\nabla(w^{R,\xi}_{T}-w^{R,\xi})-\nabla(w^{R,\xi}_{T}- w^{R,\xi})\cdot A(\xi+\nabla w^{R,\xi})~dy\\
	&=\dashint_{Y_R}\nabla(w^{R,\xi}_{T}-w^{R,\xi})\cdot A\nabla(w^{R,\xi}_{T}-w^{R,\xi})~dy.
	\end{align*}
	Define $\psi^R_T=-T(w^{R,\xi}_{T}-w^{R,\xi})$, then the above identity becomes
	\begin{align}\label{intermediatehomog}
	\xi\cdot(A^{R,*}_{T}-A^{R,*})\xi=T^{-2}\dashint_{Y_R}\nabla\psi^R_T\cdot A\nabla\psi^R_T~dy.
	\end{align}
	We know that $\psi^R_T\in H^1_\sharp(Y_R)$ solves the equation
	\begin{align*}
	T^{-1}\psi^R_T-\nabla\cdot(A\nabla\psi^R_T)=w^{R,\xi}\mbox{ in }Y_R.
	\end{align*} Therefore, integrating this equation against $\psi^R_T$ gives
	\begin{align*}
	T^{-1}\int_{Y_R}{|\psi^R_T|}^2~dy+\int_{Y_R}\nabla\psi^R_T\cdot A\nabla\psi^R_T~dy=\int_{Y_R}w^{R,\xi}\psi^R_T~dy.
	\end{align*}
	Dropping the first term on LHS yields
	\begin{align*}
	\int_{Y_R}\nabla\psi^R_T\cdot A\nabla\psi^R_T~dy\leq\int_{Y_R}w^{R,\xi}\psi^R_T~dy.
	\end{align*}
	Hence, 
	\begin{align*}
	\int_{Y_R}\nabla\psi^R_T\cdot A\nabla\psi^R_T~dy\leq||w^{R,\xi}||_{L^2(Y_R)}||\psi^R_T||_{L^2(Y_R)}.
	\end{align*}
	By coercivity of $A$,
	\begin{align*}
	\alpha\int_{Y_R}|\nabla\psi^R_T|^2~dy\leq||w^{R,\xi}||_{L^2(Y_R)}||\psi^R_T||_{L^2(Y_R)}.
	\end{align*}
	On applying Poincar\'e inequality:
	\begin{align*}
	\alpha||\nabla\psi^R_T||^2_{L^2(Y_R)}\lesssim R||w^{R,\xi}||_{L^2(Y_R)}||\nabla\psi^R_T||_{L^2(Y_R)},
	\end{align*}or
	\begin{align*}
	||\nabla\psi^R_T||_{L^2(Y_R)}\lesssim R||w^{R,\xi}||_{L^2(Y_R)}.
	\end{align*}
	Substituting the above in~\eqref{intermediatehomog} gives
	\begin{align}\label{periodicstep}
	\xi\cdot(A^{R,*}_{T}-A^{R,*})\xi\lesssim R^2T^{-2}\dashint_{Y_R}|w^{R,\xi}|^2~dy.
	\end{align}
	For $x\in Y_1$, define $\tilde{w}^{R,\xi}(x)=\frac{1}{R}w^{R,\xi}(Rx)$, then $\tilde{w}^{R,\xi}$ satisfies the equation:
	\begin{align}\label{rescaledapprox}
	-\nabla\cdot(A(Rx)(\xi+\nabla\tilde{w}^{R,\xi}(x))=0,&~x\in Y_1\\
	\tilde{w}^{R,\xi}(x)\mbox{ is }Y_1-\mbox{periodic}.
	\end{align}
	This equation is a particular case of the following homogenization problem:
	\begin{align}\label{arbitrary}
	-\nabla\cdot A\left(\frac{x}{\epsilon}\right)\left(z+\nabla v^\epsilon\right)=h & \mbox{ in }\tilde{\Omega},
	\end{align} where $z\in L^2(\tilde{\Omega})$, $h\in H^{-1}(\tilde{\Omega})$. By~\cite[Theorem~5.2]{Jikov1994}, if the solutions $v^\epsilon$ converge weakly to $v^0$ in $H^1_0(\tilde{\Omega})$, then $v^0$ satisfies the equation     
	\begin{align*}
	-\nabla\cdot A^*\left(z+\nabla v^0\right)=h,~x\in\tilde{\Omega}.
	\end{align*} Therefore, $\tilde{w}^{R,\xi}\rightharpoonup \tilde{w}^{\infty}$ in $H^1_\sharp(Y_1)$, which satisfies the equation
	\begin{align*}
	-\nabla\cdot A^*(\xi+\nabla\tilde{w}^{\infty})=0,~x\in Y_1.
	\end{align*} The zero mean condition on $\tilde{w}^{\infty}$ forces $\tilde{w}^{\infty}=0$ a.e.
	Now, by a similar analysis to~\cite{Shen2015}, we can obtain a rate of convergence estimate for the strong $L^2$-convergence of $w^{R,\xi}\to 0$ in the following form:
	\begin{align*}
	||\tilde{w}^{R,\xi}||_{L^2(Y_1)}=||\tilde{w}^{R,\xi}-\tilde{w}^{\infty}||_{L^2(Y_1)}\leq C_\gamma R^{-\gamma},
	\end{align*} for any $0<\gamma<\frac{\tau}{\tau+1}$.
	Finally, it follows that
	\begin{align}\label{almostperiodicstep}
	\xi\cdot(A^{R,*}_{T}-A^{R,*})\xi&\lesssim R^4T^{-2}\dashint_{Y_R}|{w}^{R,\xi}(x)|^2~dx\nonumber\\
	&\lesssim R^4T^{-2}\int_{Y_1}|\tilde{w}^{R,\xi}(x)|^2~dx\nonumber\\
	&\lesssim R^{4-2\gamma}T^{-2}.
	\end{align}
\end{proof}

\subsection{Proof of Theorem~\ref{THEOREM}}

\begin{proof}
	Using theorems~\ref{convergence1},~\ref{convergence2},~\ref{convergence3},~\ref{convergence4} and the inequality~\eqref{fourparts}, we obtain
	\begin{align}
	|A^*-A^{R,*}|\lesssim \frac{1}{{T}^{\frac{\tau}{2(\tau+1)}-\omega}}+\frac{1}{L^\tau}+\left(\frac{L}{R}\right)^{1/2}+R^{d}\exp\left(-c\frac{R^\delta}{\sqrt{T}}\right)+\frac{1}{R^{(1-\delta)/2}}+ R^{4-2\gamma} T^{-2}.
	\end{align}
	Let $\gamma{'},\beta_1\in(0,1)$. By choosing $\gamma{'}=\gamma/2$, $T=R^{2-\gamma{'}}$, $L=R^{\beta_1}$, $\beta_2=2\gamma{'}$ and $\delta=1-\beta_2/8$, we can obtain the estimate $|A^*-A^{R,*}|\lesssim \frac{1}{R^{\beta}}$, for some $\beta>0$.
\end{proof}

\begin{remark}
	\leavevmode
	\begin{enumerate}
		\item The above theorem quantifies the rate of convergence of approximate homogenized tensor corresponding to periodizations for a class of almost periodic media. Interestingly, this approximation can also be computed through Bloch wave methods as described earlier. Indeed, the approximation corresponds to half the Hessian of the first Bloch eigenvalue of the periodized operator. In the Physics literature, Bloch wave tools are used even for non-periodic media. This result provides quantitative justification for the use of Bloch wave method for non-periodic media.
		\item The approximate cell problem with periodic boundary conditions~\eqref{CellP} is not unique. Bourgeat-Piatnitski~\cite{BourgeatPiatnitski2004} also prove rate of convergence estimates for Dirichlet and Neumann approximations. Our theorem can be modified appropriately for Dirichlet and Neumann approximations. In the place of rate of convergence estimates for almost periodic homogenization of periodic BVPs, which were used in Subsection~\ref{lastpart}, one would require rate of convergence estimates for almost periodic homogenization of Dirichlet and Neumann BVPs. These estimates are available in~\cite{Shen2015,ArmstrongShen2016}. 
	\end{enumerate}
\end{remark}

\section{Numerical study}\label{numerics} In this section, we report on the numerical experiments that we carried out for certain benchmark periodic and quasiperiodic functions introduced in~\cite{Gloria2011,GloriaHabibi2016}. It is known that approximations of homogenized tensor for periodic media using Dirichlet and Periodic correctors have a rate of convergence of $R^{-1}$~\cite[Cor.~1]{Abdulle2019}. Our aim is to verify such results. We also numerically study the difference of Dirichlet and Periodic correctors as we feel that this difference should also show decay. These computations are done using the finite element method on FEniCS software~\cite{AlnaesBlechta2015a}.

\subsection{Numerical study for approximations of $A^*$ using periodic correctors}
In this subsection, we investigate the behavior of the error in approximations to homogenized tensor $|A^{R,*}-A^*|$ using periodic correctors with respect to $R$.  The approximate homogenized tensor $A^{R,*}$ corresponding to periodization $A^R$ has already been defined in~\eqref{homoR}.

The first two examples are that of periodic matrices \begin{align*}A_1(x)=\left(\frac{2+1.8 \sin(2\pi x)}{2+1.8 \cos(2\pi y)}+\frac{2+\sin(2\pi y)}{2+1.8 \cos(2\pi x)}\right)\Id,\mbox{ and }\end{align*} \begin{align*}A_2(x)=(1+30(2+\sin(2\pi x)\sin(2\pi y)))\Id.\end{align*} The homogenized tensor $A^*$ is computed numerically by solving the periodic cell problem on the unit cube $[0,1)^d$ and is found to be approximately $2.757\Id$ and $59.1\Id$ for $A_1$ and $A_2$ respectively.

The third example is that of the following matrix with quasiperiodic entries:
\begin{align*}A_3(x)=
\left(4+\cos(2\pi(x+y))+\cos(2\pi\sqrt{2}(x+y))\right)\Id\end{align*}
The homogenized coefficient for quasiperiodic media $A^*$~\eqref{homoAP} is defined as a  mean value in the full space $\mathbb{R}^d$ and therefore it is impossible to compute. Hence, for the computation of the error, $A^*$ is taken to be the approximate homogenized tensor
\begin{align*}\xi\cdot A^{R,D,*}_{T}\xi=\frac{1}{|Y_R|}\int_{Y_R}\left((\xi+\nabla w_{T}^{R,D,\xi})\cdot A(\xi+\nabla w_{T}^{R,D,\xi})\right)~dy.\end{align*}
corresponding to the following cell problem:
Find $ w_{T}^{R,D,\xi}\in H^1_0(Y_R)$ such that 
\begin{align*}
-\nabla\cdot(A (\xi+\nabla w_{T}^{R,D,\xi}))+T^{-1}w_{T}^{R,D,\xi}=0.
\end{align*} for $R=T=60$, since $A^{R,D,*}_T$ is known to converge faster to $A^*$ as $R,T\to\infty$~\cite{Gloria2011,GloriaHabibi2016}. 

The log-log plots of errors in periodic and quasiperiodic cases seem to suggest an asymptotically polynomial rate of convergence. Computations are performed with P1 finite elements with a varying choice of number of meshpoints $n$ per dimension, as denoted in Figures~\ref{fig:logPeriodic} and~\ref{fig:logerrorplotfuncAPP}.

\begin{figure}
	\begin{subfigure}[b]{.495\linewidth}
		\centering
		\includegraphics[width=\linewidth]{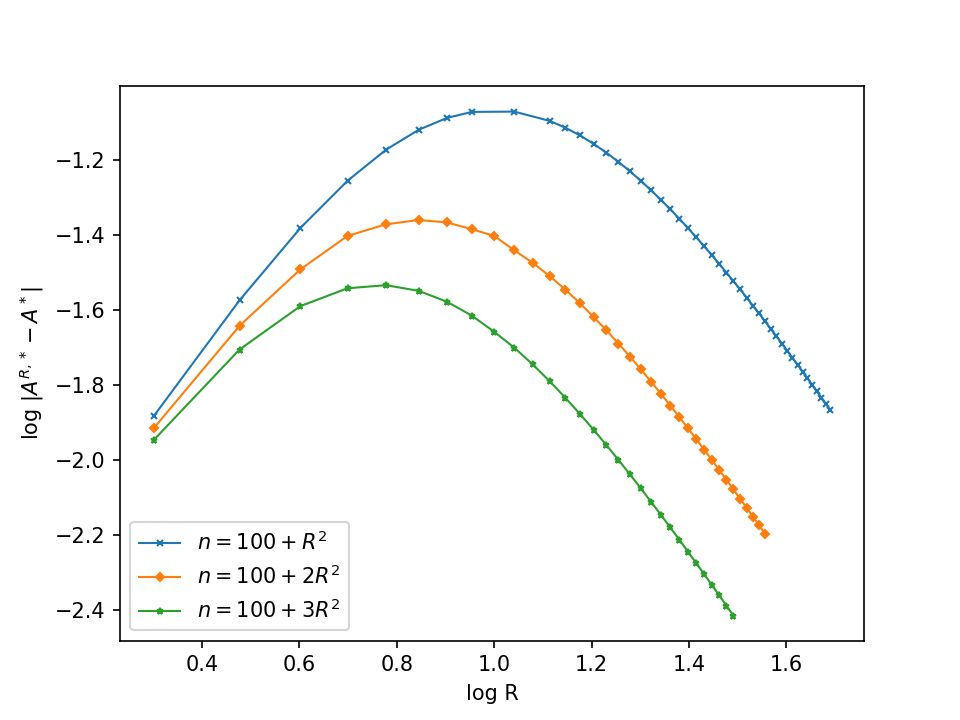}
		\caption{Periodic function $A_1$}
		\label{fig:P1}
	\end{subfigure}
	\begin{subfigure}[b]{.495\linewidth}
		\centering
		\includegraphics[width=\linewidth]{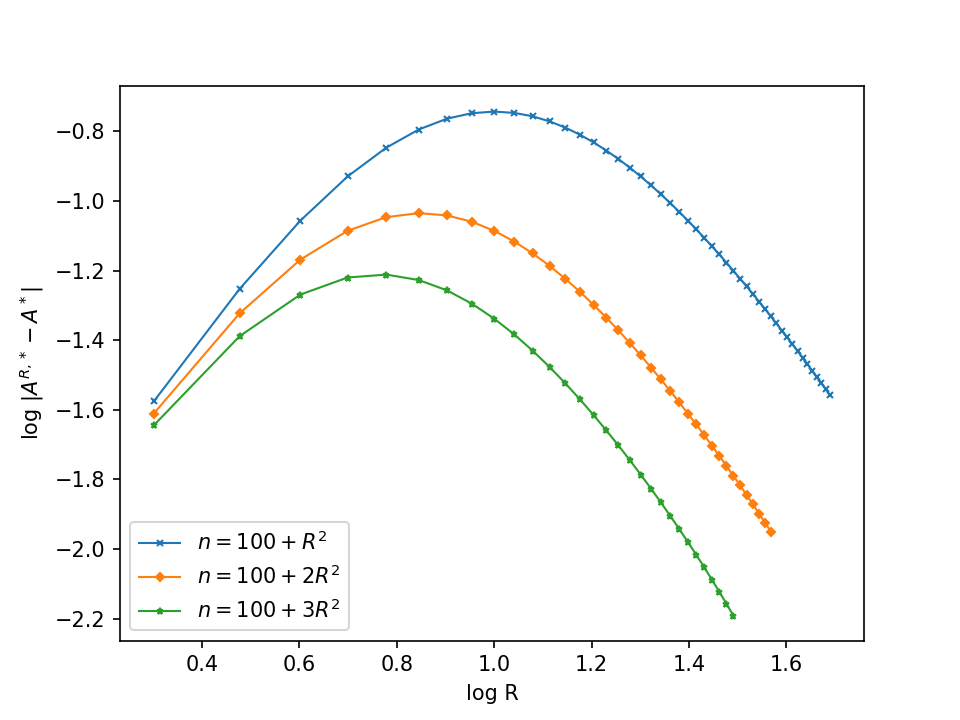}
		\caption{Periodic Function $A_2$}
		\label{fig:P2}
	\end{subfigure}
	\caption{The error $|A^{R,*}-A^*|$ for approximations to homogenized tensor using periodic correctors in log-log scale for the functions $A_1$ and $A_2$ with respect to $R$}
	\label{fig:logPeriodic}
\end{figure}

\begin{figure}
	\centering
	\includegraphics[width=0.8\linewidth]{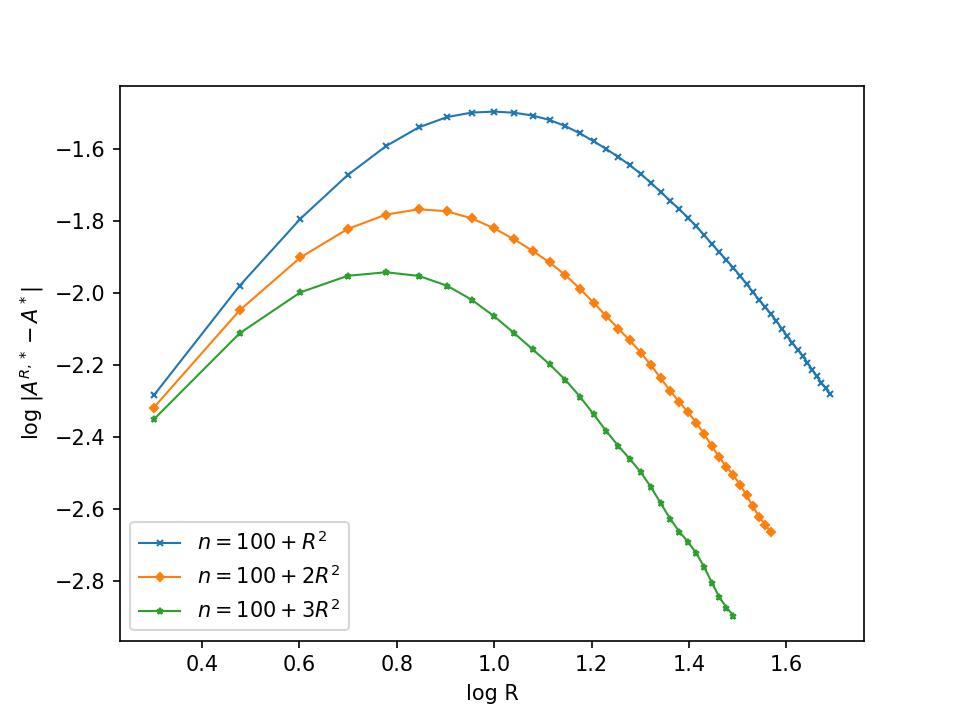}
	\caption{The error $|A^{R,*}-A^*|$ for approximations to homogenized tensor using periodic correctors in log-log scale for the function $A_3$ with respect to $R$}
	\label{fig:logerrorplotfuncAPP}
\end{figure}

\subsection{Numerical study for Dirichlet Approximations}
The cell problem for almost periodic media~\eqref{cellAP} is posed in $\mathbb{R}^d$. The following is its Dirichlet approximation, which is the truncation of~\eqref{cellAP} on a cube $Y_R=[-R\pi,R\pi)^d$ of side length $2\pi R$. Let $H^1_0(Y_R)$ denote the space of all $L^2(Y_R)$ functions whose weak derivatives are also in $L^2(Y_R)$ and whose trace on $Y_R$ is zero. 

Given $\xi\in\mathbb{R}^d$, find $w^{R, D,\xi}\in H^1_0(Y_R)$ such that
\begin{align}\label{DirichletApproximation}
-\nabla\cdot A(\xi+\nabla w^{R,D,\xi})=0.
\end{align} 
Then Dirichlet approximation $A^{R,D,*}=\left(a^{R,D,*}_{kl}\right)$ to the homogenized tensor is given by 
\begin{align}\label{homoD}
a^{R,D,*}_{kl}=\mathcal{M}_{Y_R}\left(a_{kl}+\sum_{j=1}^{d}a_{kj}\frac{\partial w^{R,D,e_l}}{\partial y_j}\right).
\end{align}

In this subsection, we investigate the behavior of the error in the Dirichlet approximations $|A^{R,D,*}-A^*|$ with respect to side length $R$. 
The approximate homogenized tensor $A^{R,D,*}$ is computed by solving the Dirichlet cell problem~\eqref{cell4} for different values of $R$ going up to $40$. The computations are carried out with P2-Finite Elements discretization and $20$ points per dimension in every unit cell.  See Figure~\ref{fig:logerrorplotfunctionDirichlet} for the log-log plot of the error $|A^{R,D,*}-A^*|$ with respect to $R$ for matrices $A_1$ and $A_2$.

\begin{figure}
	\begin{subfigure}[b]{.495\linewidth}
		\centering
		\includegraphics[width=\linewidth]{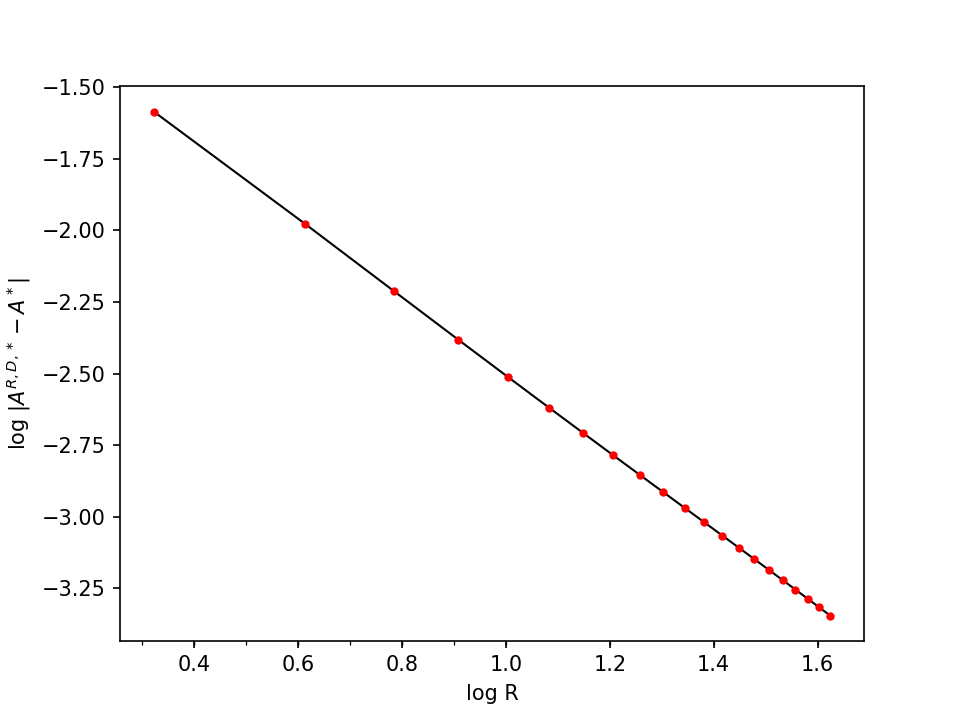}
		\caption{Periodic function  $A_1$}
		\label{fig:D1}
	\end{subfigure}
	\begin{subfigure}[b]{.495\linewidth}
		\centering
		\includegraphics[width=\linewidth]{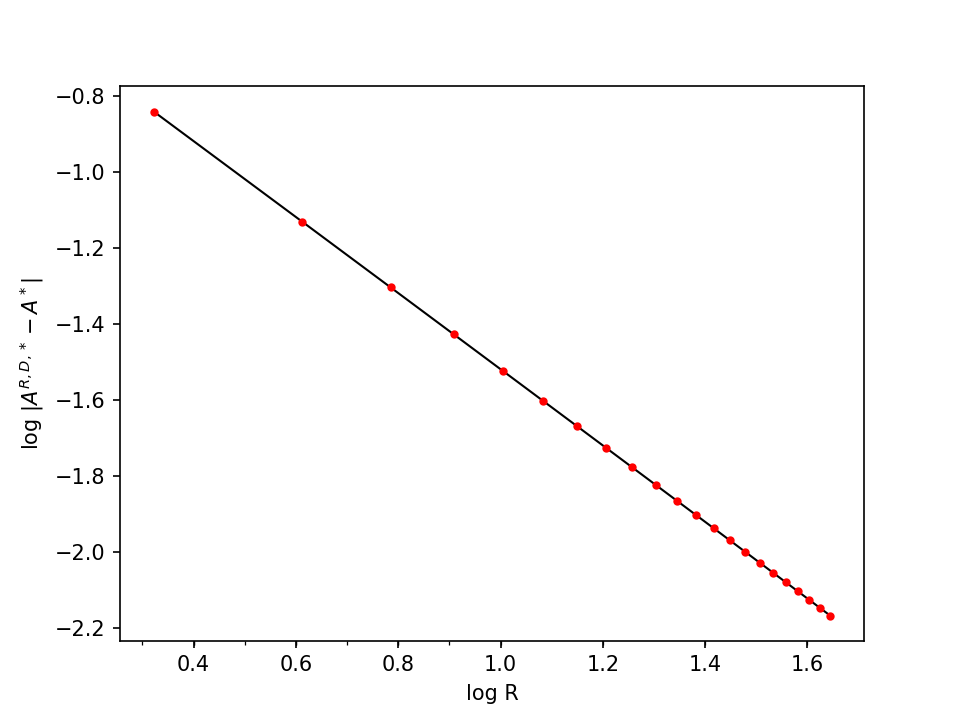}
		\caption{Periodic function $A_2$}
		\label{fig:D2}
	\end{subfigure}
\caption{The error $|A^{R,D,*}-A^*|$ for Dirichlet approximations in log-log scale for the functions $A_1$ and $A_2$ with respect to $R$.}
\label{fig:logerrorplotfunctionDirichlet}
\end{figure}

See Figure~\ref{fig:logerrorplotapfunc} for the log-log plot of the error $|A^{R,D,*}-A^*|$ with respect to $R$ for the matrix $A_3$.
\begin{figure}
	\centering
	\includegraphics[width=0.789\linewidth]{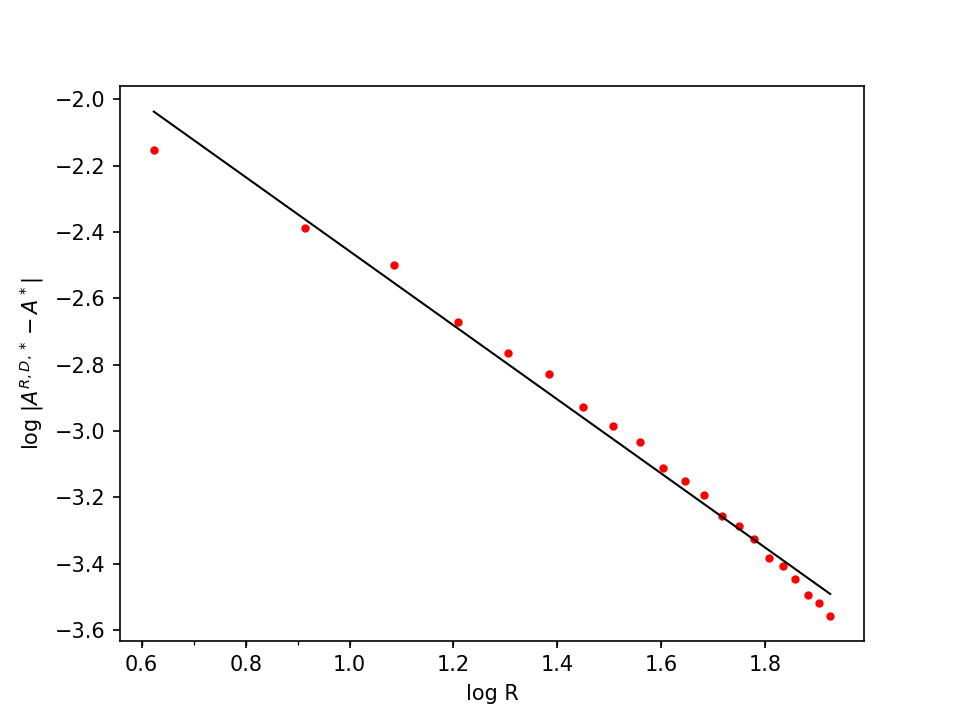}
	\caption{The error $|A^{R,D,*}-A^*|$ for Dirichlet approximations in log-log scale for the function $A_3$ with respect to $R$.}
	\label{fig:logerrorplotapfunc}
\end{figure}

\subsection{Comparison of Dirichlet and Periodic Correctors}
An interesting question that arises is whether the Dirichlet and periodic correctors, respectively $w^{R,D,\xi}$ and $w^{R,\xi}$, grow close to each other as the side length $R$ of sample cube increases. 

In Figure~\ref{fig:logDvP}, we plot the error $E(R)=\left(\dashint_{Y_{R}}|\nabla w^{R,D,e_1}(y)-\nabla w^{R,e_1}(y)|^2\,dy\right)^{1/2}$ with respect to $R$ on a log-log scale for functions $A_1$ and $A_2$. In Figure~\ref{fig:logerrorplotfuncDVPAP}, we plot the error $E(R)$ with respect to $R$ on a log-log scale for $A_3$. 
\begin{figure}
	\begin{subfigure}[b]{.495\linewidth}
		\centering
		\includegraphics[width=\linewidth]{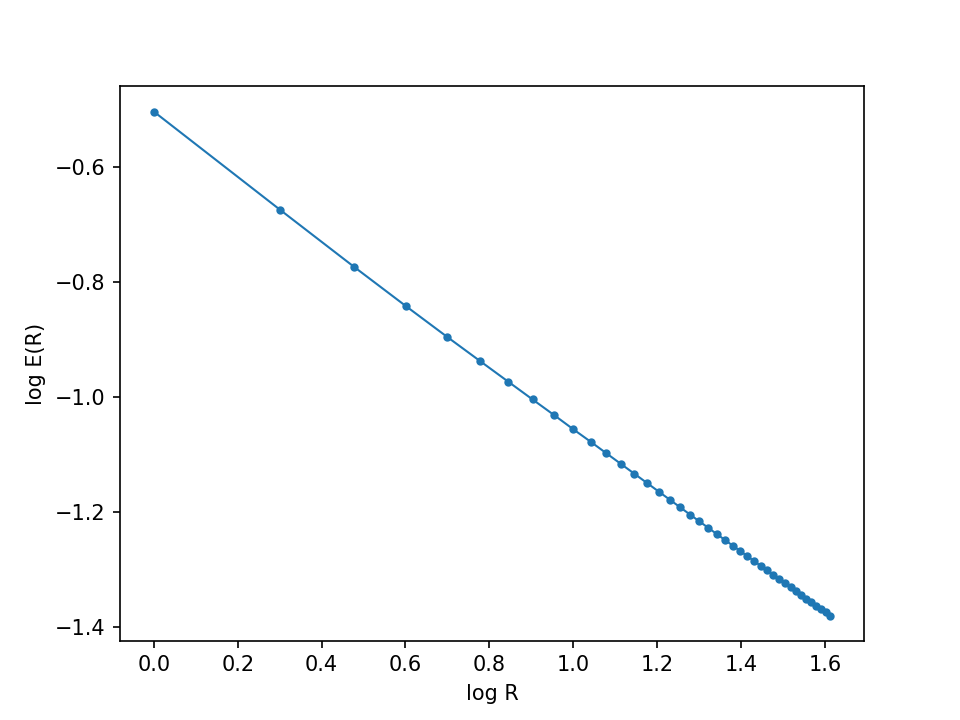}
		\caption{Periodic function $A_1$}
		\label{fig:DVP1}
	\end{subfigure}
	\begin{subfigure}[b]{.495\linewidth}
		\centering
	    \includegraphics[width=\linewidth]{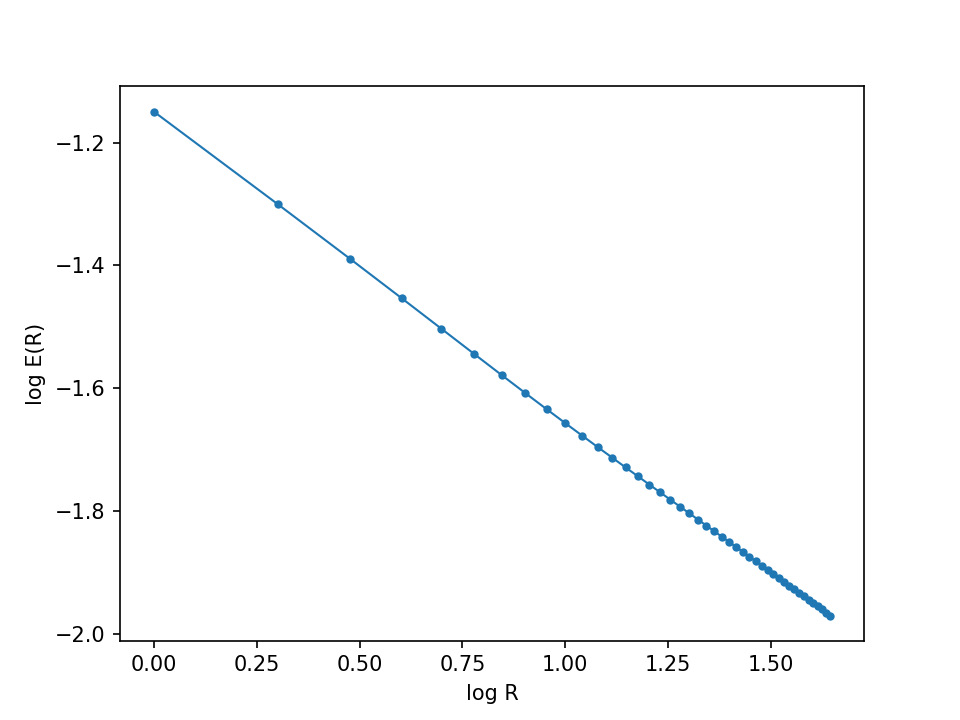}
		\caption{Periodic function $A_2$}
		\label{fig:DVP2}
	\end{subfigure}
	\caption{The averaged $L^2$ norm of the difference of the gradients $E(R)=\left(\dashint_{Y_{R}}|\nabla w^{R,D,e_1}(y)-\nabla w^{R,e_1}(y)|^2\,dy\right)^{1/2}$  in log-log scale for the correctors corresponding to the periodic matrices $A_1$ and $A_2$ plotted as a function of $R$.}
\label{fig:logDvP}
	\centering
	\includegraphics[width=0.85\linewidth]{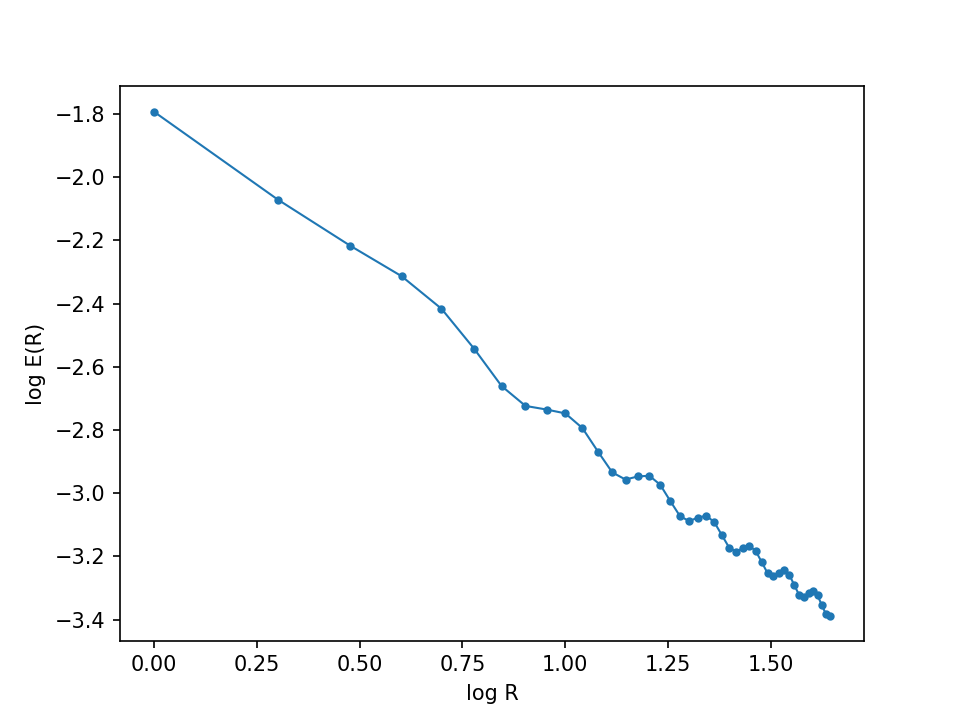}
	\caption{The averaged $L^2$ norm of the difference of the gradients $E(R)=\left(\dashint_{Y_{R}}|\nabla w^{R,D,e_1}(y)-\nabla w^{R,e_1}(y)|^2\,dy\right)^{1/2}$  in log-log scale for the correctors corresponding to the quasiperiodic matrix $A_3$ plotted as a function of $R$.}
	\label{fig:logerrorplotfuncDVPAP}
\end{figure}
The numerical study is carried out with P1 finite elements. The number of meshpoints per dimension is taken to be $n=100+R^2$.

In Figure~\ref{fig:logDvPhom}, we plot the error $|A^{R,D,*}-A^{R,*}|$ with respect to $R$ on a log-log scale for functions $A_1$ and $A_2$. In Figure~\ref{fig:logerrorplotfuncDVPAPhom}, we plot the error $|A^{R,D,*}-A^{R,*}|$ with respect to $R$ on a log-log scale for $A_3$. 

\begin{figure}
	\begin{subfigure}[b]{.495\linewidth}
		\centering
		\includegraphics[width=\linewidth]{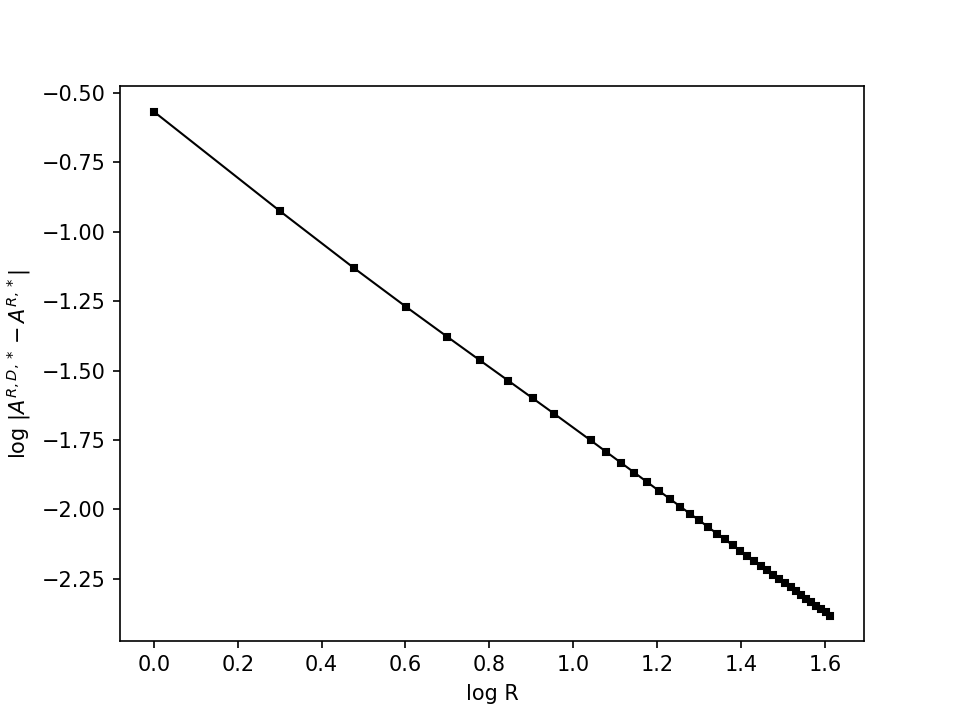}
		\caption{Periodic function $A_1$}
		\label{fig:DVPhom1}
	\end{subfigure}
	\begin{subfigure}[b]{.495\linewidth}
		\centering
		\includegraphics[width=\linewidth]{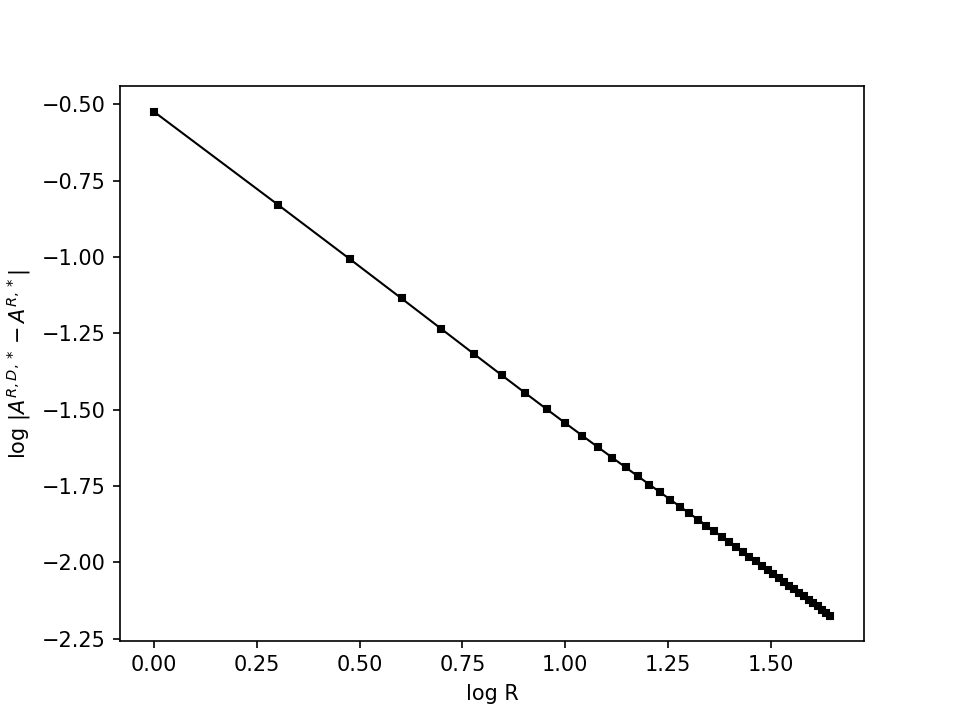}
		\caption{Periodic function $A_2$}
		\label{fig:DVPhom2}
	\end{subfigure}
	\caption{The absolute error $|A^{R,D,*}-A^{R,*}|$  in log-log scale for the periodic matrices $A_1$ and $A_2$ plotted as a function of $R$.}
	\label{fig:logDvPhom}
	\centering
	\includegraphics[width=0.85\linewidth]{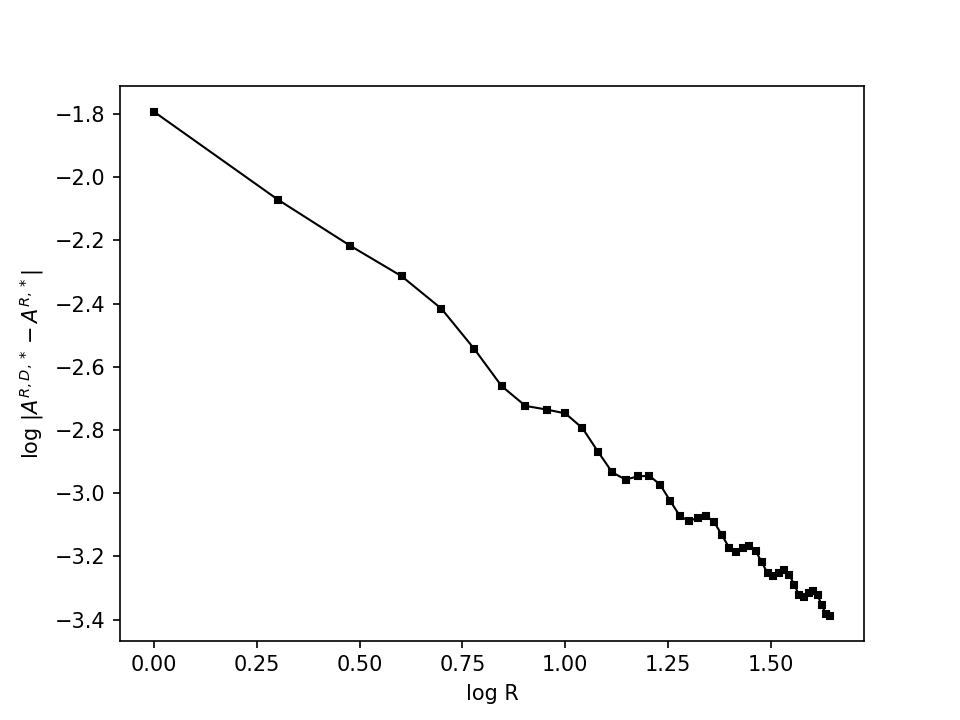}
	\caption{The absolute error $|A^{R,D,*}-A^{R,*}|$  in log-log scale for the quasiperiodic matrix $A_3$ plotted as a function of $R$.}
	\label{fig:logerrorplotfuncDVPAPhom}
\end{figure}

\section*{Acknowledgements}
We would like to thank Prof. Harsha Hutridurga for pointing us to~\cite{BourgeatPiatnitski2004}. We acknowledge SpaceTime-2 supercomputing facility at IIT Bombay for the computing time.

\bibliographystyle{apalike}
\bibliography{mylit}

\def\cprime{$'$}
\begin{thebibliography}{}

\bibitem[Abdulle et~al., 2019]{Abdulle2019}
Abdulle, A., Arjmand, D., and Paganoni, E. (2019).
\newblock Exponential decay of the resonance error in numerical homogenization
  via parabolic and elliptic cell problems.
\newblock {\em C. R. Math. Acad. Sci. Paris}, 357(6):545--551.

\bibitem[Allaire et~al., 2004]{AllaireVanni2004}
Allaire, G., Capdeboscq, Y., Piatnitski, A., Siess, V., and Vanninathan, M.
  (2004).
\newblock Homogenization of periodic systems with large potentials.
\newblock {\em Arch. Ration. Mech. Anal.}, 174(2):179--220.

\bibitem[Allaire et~al., 2011]{AllaireRauch2011}
Allaire, G., Palombaro, M., and Rauch, J. (2011).
\newblock Diffractive geometric optics for {B}loch wave packets.
\newblock {\em Arch. Ration. Mech. Anal.}, 202(2):373--426.

\bibitem[Allaire and Piatnitski, 2005]{AllairePiatnitski2005}
Allaire, G. and Piatnitski, A. (2005).
\newblock Homogenization of the {S}chr\"odinger equation and effective mass
  theorems.
\newblock {\em Comm. Math. Phys.}, 258(1):1--22.

\bibitem[Allais, 1983]{Allais1983}
Allais, M. (1983).
\newblock Sur la distribution normale des valeurs \`a des instants
  r\'{e}guli\`erement espac\'{e}s d'une somme de sinuso\"{\i}des.
\newblock {\em C. R. Acad. Sci. Paris S\'{e}r. I Math.}, 296(19):829--832.

\bibitem[Aln{\ae}s et~al., 2015]{AlnaesBlechta2015a}
Aln{\ae}s, M.~S., Blechta, J., Hake, J., Johansson, A., Kehlet, B., Logg, A.,
  Richardson, C., Ring, J., Rognes, M.~E., and Wells, G.~N. (2015).
\newblock The fenics project version 1.5.
\newblock {\em Archive of Numerical Software}, 3(100).

\bibitem[Amerio and Prouse, 1971]{Amerio1971}
Amerio, L. and Prouse, G. (1971).
\newblock {\em Almost-periodic functions and functional equations}.
\newblock Van Nostrand Reinhold Co., New York-Toronto, Ont.-Melbourne.

\bibitem[Armstrong et~al., 2016]{ArmstrongGloriaKuusi2016}
Armstrong, S., Gloria, A., and Kuusi, T. (2016).
\newblock Bounded correctors in almost periodic homogenization.
\newblock {\em Arch. Ration. Mech. Anal.}, 222(1):393--426.

\bibitem[Armstrong et~al., 2014]{Armstrong2014}
Armstrong, S.~N., Cardaliaguet, P., and Souganidis, P.~E. (2014).
\newblock Error estimates and convergence rates for the stochastic
  homogenization of {H}amilton-{J}acobi equations.
\newblock {\em J. Amer. Math. Soc.}, 27(2):479--540.

\bibitem[Armstrong and Shen, 2016]{ArmstrongShen2016}
Armstrong, S.~N. and Shen, Z. (2016).
\newblock Lipschitz estimates in almost-periodic homogenization.
\newblock {\em Comm. Pure Appl. Math.}, 69(10):1882--1923.

\bibitem[Avellaneda and Lin, 1987]{AvellanedaLin1987}
Avellaneda, M. and Lin, F.-H. (1987).
\newblock Compactness methods in the theory of homogenization.
\newblock {\em Comm. Pure Appl. Math.}, 40(6):803--847.

\bibitem[Bellissard and Testard, 1981]{bellissard1981almost}
Bellissard, J. and Testard, D. (1981).
\newblock Almost periodic hamiltonians: an algebraic approach.
\newblock Technical report, Centre National de la Recherche Scientifique.

\bibitem[Benoit and Gloria, 2017]{Gloria2017}
Benoit, A. and Gloria, A. (2017).
\newblock Long-time homogenization and asymptotic ballistic transport of
  classical waves.
\newblock \url{https://arXiv.org/abs/1701.08600}.

\bibitem[Bensoussan et~al., 2011]{Bensoussan2011}
Bensoussan, A., Lions, J.-L., and Papanicolaou, G. (2011).
\newblock {\em Asymptotic analysis for periodic structures}.
\newblock AMS Chelsea Publishing, Providence, RI.

\bibitem[Besicovitch, 1955]{Besicovitch55}
Besicovitch, A.~S. (1955).
\newblock {\em Almost periodic functions}.
\newblock Dover Publications, Inc., New York.

\bibitem[Blanc and Le~Bris, 2010]{BlancLeBris2010}
Blanc, X. and Le~Bris, C. (2010).
\newblock Improving on computation of homogenized coefficients in the periodic
  and quasi-periodic settings.
\newblock {\em Netw. Heterog. Media}, 5(1):1--29.

\bibitem[Blanc et~al., 2015]{Blanc2015}
Blanc, X., Le~Bris, C., and Lions, P.-L. (2015).
\newblock Local profiles for elliptic problems at different scales: defects in,
  and interfaces between periodic structures.
\newblock {\em Comm. Partial Differential Equations}, 40(12):2173--2236.

\bibitem[Bourgeat and Piatnitski, 2004]{BourgeatPiatnitski2004}
Bourgeat, A. and Piatnitski, A. (2004).
\newblock Approximations of effective coefficients in stochastic
  homogenization.
\newblock {\em Ann. Inst. H. Poincar\'{e} Probab. Statist.}, 40(2):153--165.

\bibitem[Bourgeat et~al., 1988]{Whitaker1988}
Bourgeat, A., Quintard, M., and Whitaker, S. (1988).
\newblock \'{E}l\'{e}ments de comparaison entre la m\'{e}thode
  d'homog\'{e}n\'{e}isation et la m\'{e}thode de prise de moyenne avec
  fermeture.
\newblock {\em C. R. Acad. Sci. Paris S\'{e}r. II M\'{e}c. Phys. Chim. Sci.
  Univers Sci. Terre}, 306(7):463--466.

\bibitem[B\u{a}lilescu et~al., 2018]{Ghosh2018}
B\u{a}lilescu, L., Conca, C., Ghosh, T., San~Mart\'{i}n, J., and Vanninathan,
  M. (2018).
\newblock The {D}ispersion {T}ensor and {I}ts {U}nique {M}inimizer in
  {H}ashin--{S}htrikman {M}icro-structures.
\newblock {\em Arch. Ration. Mech. Anal.}, 230(2):665--700.

\bibitem[Carvalho and de~Oliveira, 2002]{Carvalho2002}
Carvalho, T.~O. and de~Oliveira, C.~R. (2002).
\newblock Spectra and transport in almost periodic dimers.
\newblock {\em J. Statist. Phys.}, 107(5-6):1015--1030.

\bibitem[Casado-D\'{i}az and Gayte, 2002]{CasadoDiaz2002}
Casado-D\'{i}az, J. and Gayte, I. (2002).
\newblock A derivation theory for generalized {B}esicovitch spaces and its
  application for partial differential equations.
\newblock {\em Proc. Roy. Soc. Edinburgh Sect. A}, 132(2):283--315.

\bibitem[Cherkaev and Kohn, 1997]{cherkaev97}
Cherkaev, A. and Kohn, R., editors (1997).
\newblock {\em Topics in the mathematical modelling of composite materials},
  volume~31.
\newblock Birkh\"auser Boston, Inc., Boston, MA.

\bibitem[Conca and Vanninathan, 1997]{Conca1997}
Conca, C. and Vanninathan, M. (1997).
\newblock Homogenization of periodic structures via bloch decomposition.
\newblock {\em SIAM Journal on Applied Mathematics}, 57(6):1639--1659.

\bibitem[Corduneanu, 2009]{Cord2009}
Corduneanu, C. (2009).
\newblock {\em Almost periodic oscillations and waves}.
\newblock Springer, New York.

\bibitem[Damanik et~al., 2019]{Damanik2019}
Damanik, D., Fillman, J., and Gorodetski, A. (2019).
\newblock Multidimensional almost-periodic schr{\"o}dinger operators with
  cantor spectrum.
\newblock {\em Annales Henri Poincar{\'e}}, 20(4):1393--1402.

\bibitem[Davit et~al., 2013]{davit2013homogenization}
Davit, Y., Bell, C.~G., Byrne, H.~M., Chapman, L.~A., Kimpton, L.~S., Lang,
  G.~E., Leonard, K.~H., Oliver, J.~M., Pearson, N.~C., Shipley, R.~J., et~al.
  (2013).
\newblock Homogenization via formal multiscale asymptotics and volume
  averaging: How do the two techniques compare?
\newblock {\em Advances in Water Resources}, 62:178--206.

\bibitem[Fink, 1974]{Fink74}
Fink, A.~M. (1974).
\newblock {\em Almost periodic differential equations}.
\newblock Lecture Notes in Mathematics, Vol. 377. Springer-Verlag, Berlin-New
  York.

\bibitem[Gloria, 2011]{Gloria2011}
Gloria, A. (2011).
\newblock Reduction of the resonance error---{P}art 1: {A}pproximation of
  homogenized coefficients.
\newblock {\em Math. Models Methods Appl. Sci.}, 21(8):1601--1630.

\bibitem[Gloria and Habibi, 2016]{GloriaHabibi2016}
Gloria, A. and Habibi, Z. (2016).
\newblock Reduction in the resonance error in numerical homogenization {II}:
  {C}orrectors and extrapolation.
\newblock {\em Found. Comput. Math.}, 16(1):217--296.

\bibitem[Gloria and Otto, 2017]{GloriaOtto2017}
Gloria, A. and Otto, F. (2017).
\newblock Quantitative results on the corrector equation in stochastic
  homogenization.
\newblock {\em J. Eur. Math. Soc.}, 19(11):3489--3548.

\bibitem[{Hofstadter}, 1976]{Hofstadter76}
{Hofstadter}, D.~R. (1976).
\newblock {Energy levels and wave functions of Bloch electrons in rational and
  irrational magnetic fields}.
\newblock {\em Phys. Rev. B}, 14:2239--2249.

\bibitem[Jikov et~al., 1994]{Jikov1994}
Jikov, V.~V., Kozlov, S.~M., and Ole\u{i}nik, O.~A. (1994).
\newblock {\em Homogenization of differential operators and integral
  functionals}.
\newblock Springer-Verlag, Berlin.

\bibitem[Kato, 1995]{Kato1995}
Kato, T. (1995).
\newblock {\em Perturbation theory for linear operators}.
\newblock Classics in Mathematics. Springer-Verlag, Berlin.

\bibitem[Katz and Duneau, 1986]{Katz1986}
Katz, A. and Duneau, M. (1986).
\newblock Quasiperiodic patterns and icosahedral symmetry.
\newblock {\em J. Physique}, 47(2):181--196.

\bibitem[Kozlov, 1978]{Kozlov78}
Kozlov, S.~M. (1978).
\newblock Averaging of differential operators with almost periodic rapidly
  oscillating coefficients.
\newblock {\em Mat. Sb. (N.S.)}, 107(149)(2):199--217, 317.

\bibitem[Kozlov, 1979]{kozlov}
Kozlov, S.~M. (1979).
\newblock The averaging of random operators.
\newblock {\em Mat. Sb. (N.S.)}, 109(151)(2):188--202, 327.

\bibitem[Levitan and Zhikov, 1982]{Levitan82}
Levitan, B.~M. and Zhikov, V.~V. (1982).
\newblock {\em Almost periodic functions and differential equations}.
\newblock Cambridge University Press.

\bibitem[Maurin, 1968]{maurin68}
Maurin, K. (1968).
\newblock {\em General eigenfunction expansions and unitary representations of
  topological groups}.
\newblock Monografie Matematyczne, Tom 48. PWN-Polish Scientific Publishers,
  Warsaw.

\bibitem[Oleinik and Zhikov, 1982]{Oleinik1982}
Oleinik, O.~A. and Zhikov, V.~V. (1982).
\newblock On the homogenization of elliptic operators with almost-periodic
  coefficients.
\newblock {\em Rendiconti del Seminario Matematico e Fisico di Milano},
  52(1):149--166.

\bibitem[Payne and Weinberger, 1960]{Payne1960}
Payne, L.~E. and Weinberger, H.~F. (1960).
\newblock An optimal {P}oincar\'{e} inequality for convex domains.
\newblock {\em Arch. Rational Mech. Anal.}, 5:286--292 (1960).

\bibitem[Pozhidaev and Yurinski\u{\i}, 1989]{Yurinski1989}
Pozhidaev, A.~V. and Yurinski\u{\i}, V.~V. (1989).
\newblock On the error of averaging of symmetric elliptic systems.
\newblock {\em Izv. Akad. Nauk SSSR Ser. Mat.}, 53(4):851--867, 912.

\bibitem[Reed and Simon, 1978]{Reed1978}
Reed, M. and Simon, B. (1978).
\newblock {\em Methods of modern mathematical physics. {IV}. {A}nalysis of
  operators}.
\newblock Academic Press [Harcourt Brace Jovanovich, Publishers], New
  York-London.

\bibitem[Shechtman et~al., 1984]{Schechtman84}
Shechtman, D., Blech, I., Gratias, D., and Cahn, J.~W. (1984).
\newblock Metallic phase with long-range orientational order and no
  translational symmetry.
\newblock {\em Phys. Rev. Lett.}, 53:1951--1953.

\bibitem[Shen, 2015]{Shen2015}
Shen, Z. (2015).
\newblock Convergence rates and {H}\"{o}lder estimates in almost-periodic
  homogenization of elliptic systems.
\newblock {\em Anal. PDE}, 8(7):1565--1601.

\bibitem[Shen and Zhuge, 2018]{ShenZhuge2018}
Shen, Z. and Zhuge, J. (2018).
\newblock Approximate correctors and convergence rates in almost-periodic
  homogenization.
\newblock {\em J. Math. Pures Appl. (9)}, 110:187--238.

\bibitem[Shubin, 1978]{Shubin78}
Shubin, M.~A. (1978).
\newblock Almost periodic functions and partial differential operators.
\newblock {\em Russian Mathematical Surveys}, 33(2):1.

\bibitem[Simon, 1982]{Simon1982}
Simon, B. (1982).
\newblock Almost periodic {S}chr\"{o}dinger operators: a review.
\newblock {\em Adv. in Appl. Math.}, 3(4):463--490.

\bibitem[Sivaji~Ganesh and Tewary, 2019]{Vivek2019ii}
Sivaji~Ganesh, S. and Tewary, V. (2019).
\newblock Bloch wave homogenization of quasiperiodic media.
\newblock \url{https://arxiv.org/abs/1910.12724}.
\newblock Accessed: 2019-10-29.

\bibitem[Sivaji~Ganesh and Tewary, 2020]{Vivek2018}
Sivaji~Ganesh, S. and Tewary, V. (2020).
\newblock Generic simplicity of spectral edges and applications to
  homogenization.
\newblock {\em {Asymptotic Analysis}}, 116(3--4):219--248.

\bibitem[Sivaji~Ganesh and Vanninathan, 2004]{Sivaji2004}
Sivaji~Ganesh, S. and Vanninathan, M. (2004).
\newblock Bloch wave homogenization of scalar elliptic operators.
\newblock {\em Asymptot. Anal.}, 39(1):15--44.

\bibitem[Sivaji~Ganesh and Vanninathan, 2005]{SivajiGanesh2005}
Sivaji~Ganesh, S. and Vanninathan, M. (2005).
\newblock Bloch wave homogenization of linear elasticity system.
\newblock {\em ESAIM Control Optim. Calc. Var.}, 11(4):542--573.

\bibitem[Whitaker, 2013]{whitaker2013method}
Whitaker, S. (2013).
\newblock {\em The method of volume averaging}, volume~13.
\newblock Springer Science \& Business Media.

\bibitem[Yurinski\u{\i}, 1986]{Yurinski1986}
Yurinski\u{\i}, V.~V. (1986).
\newblock Averaging of symmetric diffusion in a random medium.
\newblock {\em Sibirsk. Mat. Zh.}, 27(4):167--180, 215.

\end{thebibliography}
\end{document}